\pgfplotsset{compat=1.16}
\appto{\bibsetup}{\sloppy}
\newtheorem{theorem}{Theorem}[section]
\newtheorem{lemma}[theorem]{Lemma}
\theoremstyle{definition}
\newtheorem{assumption}[theorem]{Assumption}
\theoremstyle{remark}
\newcommand{\R}{\mathbb{R} }
\newcommand{\C}{\mathbb{C} }
\newcommand{\dd}{\,\mathrm{d} }
\numberwithin{equation}{section}
\begin{document}

\title{Weak coupling asymptotics for the Pauli operator in two dimensions}

\author{Matthias Baur}
\address{Institute of Analysis, Dynamics and Modeling, Department of Mathematics, University of Stuttgart, Pfaffenwaldring 57, 70569 Stuttgart, Germany}
\email{matthias.baur@mathematik.uni-stuttgart.de}

\begin{abstract}
We compute asymptotic expansions for the negative eigenvalues of the Pauli operator in two dimensions perturbed by a weakly coupled potential with definite sign. Whereas previous results were limited to the case of radial magnetic fields and potentials, we are able to drop the assumption of radial symmetry entirely.
\end{abstract}

\maketitle

%\tableofcontents

\section{Introduction and Main results}

\subsection{Introduction}

Given a magnetic vector potential $A\in L^2_{loc}(\R^2;\R^2)$ and its associated magnetic field $B=\text{curl }A $, we consider the two-dimensional Pauli operator
\begin{align}
P(A) = \begin{pmatrix}
P_+(A) & 0 \\
0 & P_-(A) 
\end{pmatrix}, \qquad P_\pm(A) = (i\nabla +A)^2 \pm B, \nonumber
\end{align}
acting on $L^2(\R^2 ; \C^2)$. It models a spin-$\frac{1}{2}$ fermion interacting with a magnetic field perpendicular to the plane and is obtained as the non-relativistic limit of the Dirac operator, see Thaller \cite{Thaller1992} for more details. The operators $P_\pm(A)$ on the diagonal are the spin-up and spin-down components of the Pauli operator and defined via closure of the quadratic forms
\begin{align}
\int_{\R^2} |(i\nabla +A)u|^2 \pm B|u|^2 \dd x, \qquad u\in C_0^\infty(\R^2). \nonumber
\end{align}
Under suitable decay and regularity conditions on $B$, the Pauli operator is essentially self-adjoint on $C_0^\infty(\R^2; \mathbb{C}^2)$ and $\sigma(P(A)) = \sigma_{\text{ess}}(P(A)) = [0,\infty)$, see for example Cycon et al.~\cite{Cycon1987} and Avramska-Lukarska et al.~\cite{AvramskaLukarska2023}. The spectrum for each of the spin-components $P_\pm(A)$ is also $[0,\infty)$.

For $B\in L^1(\R^2)$, let
\begin{align}
\alpha := \frac{1}{2\pi} \int_{\R^2} B(x) \dd x < \infty \nonumber
\end{align}
be the normalized magnetic flux. The Aharonov-Casher theorem \cite{Cycon1987, Aharonov1979, Erdoes2002} states that $P(A)$ has a zero eigenvalue if $|\alpha| > 1$. Its multiplicity is
\begin{align} \label{eq:n_aharonov_casher_states}
n = \# \{ k \in \mathbb{N}_0 \,: \, k< |\alpha|- 1 \} =  \begin{cases}
\lfloor |\alpha| \rfloor, & \alpha \in \R \setminus \mathbb{Z}, \\
|\alpha|- 1, & \alpha \in\mathbb{Z}\setminus \{0 \}.
\end{cases}
\end{align}
Here, $\lfloor \, . \, \rfloor$ denotes the floor function. If $\alpha > 1$, then the zero eigenvalues originate purely from the spin-down component $P_-(A)$ while the spin-up component $P_+(A)$ does not exhibit a zero eigenvalue. The opposite holds if $\alpha < -1$. In this case, $P_+(A)$ has a zero eigenvalue of multiplicity $n$ while $P_-(A)$ has none. The zero eigenstates of $P(A)$ are commonly called Aharonov-Casher states.

In the following, we consider perturbations of the Pauli operator of the form
\begin{align}
H(\varepsilon) = P(A) - \varepsilon \mathcal{V},  \qquad \varepsilon>0, \nonumber
\end{align}
where 
\begin{align}
 \mathcal{V} = \begin{pmatrix}
V_1 & 0 \\
0 & V_2
\end{pmatrix} \nonumber
\end{align}
and $V_1$, $V_2$ denote multiplication operators with real-valued potentials that are suitably regular and fast decaying. As usual in the literature, we denote the potentials also with $V_1$, $V_2$. For easier presentation, we will restrict our discussion to the case $V_1 =V_2 =V$. Physically, this case corresponds to a perturbation with a small electric field. Note however that since the analysis that follows treats both diagonal components of $H(\varepsilon)$ separately, it is straightforward to state our results also for two potentials $V_1$ and $V_2$ that do not coincide. 

For a wide class of potentials, the essential spectrum of the perturbed Pauli operator $H(\varepsilon)$ remains that of $P(A)$, see \cite{Cycon1987}. If the perturbation is attractive, then one expects that negative eigenvalues emerge from the bottom of the essential spectrum of the unperturbed Pauli operator, since for sufficiently small coupling parameter $\varepsilon$, the interaction between the potential and the zero eigenstates pushes the zero eigenvalues down. In the case where $\varepsilon$ is small, the potential $V$ is usually called ``weakly coupled''.

Weakly coupled potentials are indeed physically relevant. The interaction between an electron's spin and a magnetic field is characterized by the gyromagnetic ratio of the magnetic moment $g$. While $g=2$ for particles described by the unperturbed Pauli operator, the experimentally measured gyromagnetic ratio of electrons exhibits an anomaly which slightly shifts the $g$-factor to $g \approx 2.0023$. This shift can be explained by QED corrections. In the 1990s, various authors observed that the presence of a magnetic field together with any anomalous magnetic moment $g>2$ allows binding of electrons. The anomalous magnetic moment of electrons was taken into account by adding small perturbations $\varepsilon V_{1,2} = \pm\frac{1}{2}(g-2) B$ to the Pauli operator. We briefly review selected results.

In 1993, Bordag and Voropaev \cite{Bordag1993} established the existence of $n+1$ bound states for $\alpha \in \mathbb{R} \setminus \mathbb{Z}$ in three explicitly solvable models. Also relying on an explicit model, Cavalcanti, Fraga and de Carvalho \cite{Cavalcanti1997} later discussed the case of a magnetic field that is constant on a disc and zero outside. The first step towards general magnetic field profiles was done by Bentosela, Exner and Zagrebnov \cite{Bentosela1998}, who showed that magnetic fields with a rotational symmetry admit at least one bound state if the field is strong enough. Then, Cavalcanti and de Carvalho \cite{Cavalcanti1998} were able to construct a suitable set of test functions to show the existence of at least
\begin{align}
n' = \begin{cases}
n+1, & \alpha \in \R \setminus \mathbb{Z}, \\
n+2, & \alpha \in\mathbb{Z},
\end{cases} \nonumber
\end{align} 
bound states with negative energy, assuming also rotational symmetry of the magnetic field and non-negative sign. Symmetry assumptions and assumptions on the sign of the magnetic field could be dropped in the following, see \cite{Bentosela1999, Bentosela1999a}. 

Observe that the number of negative eigenvalues $n'$ under weak perturbations is strictly larger than $n$, the number of zero eigenvalues of the unperturbed Pauli operator. The difference between $n'$ and $n$ is caused by so-called virtual bound states at zero or zero resonant states of $P(A)$. If $\alpha \geq 0$, $P_-(A)$ exhibits one or two virtual bound states at zero. These are zero modes of $P_-(A)$ that are in $L^\infty(\R^2) \setminus L^2(\R^2)$. The number of virtual bound states at zero of $P_-(A)$ depends on whether the magnetic flux $\alpha$ is integer or non-integer. Similarly, if $\alpha \leq 0$, then $P_+(A)$ shows one or two virtual bound states at zero. Each virtual bound state at zero leads to one additional negative eigenvalue for the weakly coupled Pauli operator.

While the previously mentioned papers predominantly discussed the existence of bound states for certain weakly perturbed Pauli operators, it is natural to ask for approximate expressions for the bound state energies for a wider class of weakly coupled potentials. In this paper, we derive asymptotic expansions of the negative eigenvalues of $H(\varepsilon)$ in the limit $\varepsilon \searrow 0$, the weak coupling limit. 

For Schrödinger operators $-\Delta - \varepsilon V$, asymptotics for eigenvalues in the weak coupling limit were already discussed by several authors in the 1970s, see e.g.\ \cite{Reed1978, Simon1976, Simon1977, Klaus1980, Blankenbecler1977, Klaus1977}. It was discovered that although the free Laplacian does not have a zero eigenvalue, Schrödinger operators in dimensions one and two exhibit one negative eigenvalue in the weak coupling limit, provided that the added potential is attractive. More precisely, in two dimensions, Simon \cite{Simon1976} showed in 1976 that if $\int V \dd x >0$, then for small enough $\varepsilon$, there exists one negative eigenvalue $\lambda(\varepsilon)$ with 
\begin{align}
\lambda(\varepsilon) \sim - \exp\left(-4\pi \left(  \int_{\R^2} V(x) \dd x  \right)^{-1} \varepsilon^{-1} \right), \qquad \varepsilon \searrow 0. \label{eq:simon_asymptotic_proof}
\end{align}
His calculations are based on the Birman-Schwinger principle and the representation of the Birman-Schwinger operator as an integral operator using the integral kernel of $(-\Delta - \lambda)^{-1}$. Following Simon's approach, Arazy and Zelenko \cite{Arazy2005,Arazy2005a} proved asymptotic expansions for negative eigenvalues of generalized Schrödinger operators $(-\Delta)^l - \varepsilon V$ in $\R^d$ for $2l \geq d$. Fractional Schrödinger operators have also been discussed \cite{Hatzinikitas2010}.

Extraction of weak coupling asymptotics via Birman-Schwinger operators and explicit resolvent expressions could not be applied in the same way for the Pauli operator, since its resolvent is not as easily expressed by an integral kernel as the resolvent of the free Laplacian. This can be seen as a reason why methods applied in the setting of anomalous magnetic moments, see again \cite{Cavalcanti1997, Cavalcanti1998, Bordag1993, Bentosela1998, Bentosela1999, Bentosela1999a}, to prove existence of negative eigenvalues instead relied on rather explicit models and variational principles. 

Using the abstract Birman-Schwinger principle, Weidl \cite{Weidl1999} was able to compute the number of negative eigenvalues of $H(\varepsilon)$ for small enough $\varepsilon$ when $B$ is bounded, compactly supported and the potential $V$ is sufficiently regular. For non-negative potential $V$, his result guarantees the existence of exactly $n'$ bound states in the weak coupling limit. However, his approach did not yield asymptotic expansions for the eigenvalues.

Frank, Morozov and Vugalter \cite{Frank2010} were able to compute weak coupling asymptotics for the Pauli operator by further pushing the variational approach. They worked however under the rather restrictive assumption of radial, compactly supported $B$ and radial, non-negative $V$. Here, the assumption of radial fields allowed decomposing the perturbed Pauli operator into several half-line operators which were subsequently treated variationally.

We will generalize the weak coupling asymptotics computed by Frank, Morozov and Vugalter to $B$ and $V$ that do not necessarily exhibit a radial symmetry. In contrast to Frank, Morozov and Vugalter, we return to the Birman-Schwinger principle approach and make use of asymptotic expansions of resolvents of the Pauli operator found in a recent paper by Kova{\v{r}}{\'{\i}}k \cite{Kovarik2022}. We extract the eigenvalue asymptotics by iterated use of the Schur-Livsic-Feshbach-Grushin (SLFG) formula, see Lemma \ref{lem:slfg_formula}. We note that Section 10 of \cite{Kovarik2022} already contains a brief analysis of weak coupling asymptotics in the case $0<\alpha < 1$. The main purpose of this paper is to extend this analysis to any magnetic flux $\alpha$.

The paper is organized as follows: In the following, we will recall the notion of Aharonov-Casher states and state our main results. Asymptotic expansions for negative eigenvalues of $H(\varepsilon)$ are given for three mutually exclusive cases: the zero-flux case, the non-integer flux case and the non-zero, integer flux case. In Section \ref{sec:prelim_not}, we will set up more notation and list the resolvent expansions on which our calculations are based. Section \ref{sec:weak_coupling_proof} will be concerned with the proofs of the main results. We give a proof for each of the three cases mentioned above.

\subsection{Gauge and Aharonov-Casher states}

The zero modes of the Pauli operator, the Aharonov-Casher states, play a central role in the following. To define them, we first need to set a gauge for the magnetic field $B$. Although the results presented here do not depend on the specific choice of gauge, we choose the gauge as in \cite{Kovarik2022}, so that the results therein hold. Let 
\begin{align}
h(x) = - \frac{1}{2\pi} \int_{\R^2} B(y) \log|x-y| \dd y. \nonumber
\end{align}
We set the canonical vector potential
\begin{align}
A_h(x) = (\partial_{2}h(x), -\partial_1 h(x)). \nonumber
\end{align}
Then, $\operatorname{curl} A_h = -\Delta h = B$, so $A_h$ induces $B$. Furthermore, note that
\begin{align}\label{eq:h_alpha_log}
h(x) = -\alpha \log |x| + O(|x|^{-1}), \qquad |x| \to \infty,  
\end{align}
which implies $e^{\mp h}=O(|x|^{\pm\alpha})$ as $|x| \to  \infty$. This property of $h$ will become important shortly. Finally, we make a gauge transformation and replace $A_h$ by $A = A_h + \nabla \chi$, where $\chi$ is the transformation function $\chi: \R^2 \to \R$ constructed in \cite[Sec. 3.1]{Kovarik2022}.

We now construct the Aharonov-Casher states, the zero eigenstates of $P(A)$. For this, let $N_\pm$ be the space of zero eigenstates of $P_\pm(A)$ (in other words, the kernel of $P_\pm(A)$). It is easily verified that for $v\in C_0^\infty(\R^2)$
\begin{align}
\int_{\R^2} |(i\nabla +A_h)(e^{\mp h} v)|^2 \pm B|e^{\mp h} v|^2 \dd x = \int_{\R^2} e^{\mp 2 h} |(\partial_1 \mp i \partial_2 )v|^2 \dd x \nonumber
\end{align}
and it follows that any zero mode of $P_\pm(A)=P_\pm(A_h + \nabla \chi)$, i.e.\ a solution of the equation $P_\pm(A)u=0$, must have the form $u=e^{\mp h + i \chi} v$ with $v$ analytic in $x_1\pm i x_2$. 

Suppose $\alpha > 0$. A zero mode $u$ is a zero eigenstate if $u \in L^2(\R^2)$. Requiring $u \in L^2(\R^2)$ forces $v$ to be a polynomial of finite degree. To see this, it is enough to realize that if $v$ is a polynomial of degree $m$, then
\begin{align}
e^{\mp h} v = |x|^{m \pm \alpha} (1+o(1)), \qquad |x | \to \infty, \nonumber
\end{align}
due to \eqref{eq:h_alpha_log}, so the degree must satisfy $m< \mp \alpha -1$. The spin-up component $P_+(A)$ has therefore a trivial zero eigenspace, i.e.\
\begin{align}
N_+ = \{ 0 \}, \nonumber 
\end{align}
while the spin-down component $P_-(A)$ has the zero eigenspace
\begin{align}
N_- &= \operatorname{span}\{e^{  h(x) + i\chi(x) },(x_1 - i x_2)e^{  h(x) + i\chi(x) }, ...,  (x_1 - i x_2)^{n-1} e^{  h(x) + i\chi(x) }\} \nonumber
\end{align}
with $n$ from \eqref{eq:n_aharonov_casher_states}. We note that under enough regularity of $B$, the function $e^h$ is non-zero almost everywhere. Hence, the states $(x_1 - i x_2)^k e^{  h(x) + i\chi(x) }$ are linearly independent and hence indeed $\operatorname{dim}(N_-)=n$. Finally, the zero eigenstates of the full Pauli operator $P(A)$ are given by
\begin{align} \label{eq:full_pauli_zero_eigenstates}
\begin{pmatrix}
\psi^+ \\
\psi^- 
\end{pmatrix},\quad \psi^\pm \in N_\pm.
\end{align} 
This is the statement of the Aharonov-Casher Theorem.

Virtual bound states at zero are zero modes of $P(A)$ that are bounded but not in $L^2(\R^2;\mathbb{C}^2)$. To define them, we need to find all bounded zero modes of $P_\pm(A)$ first. Hence, let $N_\pm^\infty$ denote the space of bounded zero modes of $P_\pm(A)$. By the same arguments as above, we see that for $u$ to be bounded, the function $v$ must be a polynomial of degree $m\leq \mp \alpha$. For $\alpha >0$, this implies that the spin-up component $P_+(A)$ does not have any bounded zero modes, while the spin-down component $P_-(A)$ admits, in addition to its zero eigenstates, the bounded zero modes
$$(x_1 - i x_2)^{n} e^{  h(x) + i\chi(x) }$$
if $\alpha \in \mathbb{R} \setminus \mathbb{Z}$ or 
$$(x_1 - i x_2)^{n} e^{  h(x) + i\chi(x) }, \qquad (x_1 - i x_2)^{n+1} e^{  h(x) + i\chi(x) }$$
if $\alpha \in \mathbb{Z}$. Therefore,
\begin{align}
N_+^\infty &= \{ 0 \} , \nonumber \\
N_-^\infty &=\begin{cases} \operatorname{span}\{e^{  h(x) + i\chi(x) },(x_1 - i x_2)e^{  h(x) + i\chi(x) }, ...,  (x_1 - i x_2)^{n} e^{  h(x) + i\chi(x) }\}, & \alpha \in\mathbb{R} \setminus \mathbb{Z}, \\
\operatorname{span}\{e^{  h(x) + i\chi(x) },(x_1 - i x_2)e^{  h(x) + i\chi(x) }, ...,  (x_1 - i x_2)^{n+1} e^{  h(x) + i\chi(x) }\}, & \alpha \in \mathbb{Z}.
\end{cases} \nonumber
\end{align}

The situation is analogous in the case $\alpha <0$. Here, $P_-(A)$ has trivial zero eigenspace and no bounded zero modes, i.e.~$N_-=N_-^\infty = \{ 0 \}$, while $P_+(A)$ has the $n$-dimensional zero eigenspace 
\begin{align}
N_+ &= \operatorname{span}\{e^{ - h(x) + i\chi(x) },(x_1 + i x_2)e^{ - h(x) + i\chi(x) }, ...,  (x_1 + i x_2)^{n-1} e^{ - h(x) + i\chi(x) }\}  \nonumber
\end{align}
and the space of bounded zero modes
\begin{align}
N_+^\infty &=\begin{cases} \operatorname{span}\{e^{ - h(x) + i\chi(x) },(x_1 + i x_2)e^{  -h(x) + i\chi(x) }, ...,  (x_1 + i x_2)^{n} e^{  -h(x) + i\chi(x) }\}, & \alpha \in\mathbb{R} \setminus \mathbb{Z}, \\
\operatorname{span}\{e^{  -h(x) + i\chi(x) },(x_1 + i x_2)e^{  -h(x) + i\chi(x) }, ...,  (x_1 + i x_2)^{n+1} e^{  -h(x) + i\chi(x) }\}, & \alpha \in \mathbb{Z}. 
\end{cases} \nonumber
\end{align}

A special case is the case $\alpha = 0$. In this case, the operators $P_\pm(A)$ both have no zero eigenstates, but both exhibit a bounded zero mode, given by $e^{  \mp h(x) + i\chi } $. Hence, 
\begin{align}
N_\pm = \{ 0 \}, \qquad N_\pm^\infty &= \operatorname{span}\{e^{ \mp h(x) + i\chi(x) }\}.  \nonumber
\end{align}

Having determined the spaces of $L^2$-integrable and bounded zero modes of $P_\pm(A)$, virtual bound states at zero of $P_\pm(A)$ are simply all states in $(N_+^\infty \setminus N_+) \cup \{ 0\}$ resp. $(N_-^\infty \setminus N_-) \cup \{ 0\}$. As before, the bounded zero modes and virtual bound states at zero of the full Pauli operator are attained by composing the respective states $ \psi^\pm \in (N_\pm^\infty \setminus N_\pm) \cup \{ 0\}$ as in \eqref{eq:full_pauli_zero_eigenstates}.

In the following, we will often work either with the spin-up component $P_+(A)$ or the spin-down component $P_-(A)$.  We will often use the term ``Aharonov-Casher states`` synonymously for the above constructed zero eigenstates of the spin components $P_\pm(A)$, i.e.\ the functions
\begin{align}
\psi_k^\pm(x) = (x_1\pm ix_2)^{k} e^{\mp h(x)+i\chi(x)},  \label{eq:aharonov_casher_states}
\end{align}
where $k=0,...,n_\pm-1$, $n_\pm = \operatorname{dim}(N_\pm)$. Additionally, we will use the term ``generalized Aharonov-Casher state`` for the states $\psi_k^\pm$ with $k=n$ and $k=n+1$ when they are virtual bound states.

\subsection{Main results}

The weak coupling asymptotics presented in the following require assumptions on the regularity and decay behaviour of the magnetic field $B$ and the potential $V$. 

For the magnetic field, we assume the following.

\begin{assumption} \label{assump:magnetic_field}
The magnetic field $B:\R^2 \to \R$ is continuous and satisfies
\begin{align}
|B(x)| \lesssim (1+|x|^2)^{-\rho} \nonumber
\end{align}
for some $\rho > 7/2$.
\end{assumption}

This assumption is a sufficient condition for the validity of the resolvent expansions of the Pauli operator recalled in Section \ref{subsec:resolvent_expansions} that we apply in our proofs.

Additionally, for the potential, we make the following assumption.

\begin{assumption} \label{assump:potentialV}
The potential $V:\R^2 \to \R$ satisfies $V \geq 0$, $V> 0$ on a set of positive measure and
\begin{align}
V(x) \lesssim (1+|x|^2)^{-\sigma}  \nonumber
\end{align}
for some $\sigma > 3$.
\end{assumption}

Note that this assumption implies $V \in L^1(\R^2)$ and $\int_{\R^2} V(x) \dd x > 0$. 

Moreover, we can assume that the magnetic flux satisfies $\alpha \geq 0$. This is because $P_\pm(- A)$ is unitarily equivalent to $P_\mp(A)$, so flipping the sign of $B$ and hence of $\alpha$ is equivalent to exchanging the roles of $P_+(A)$ and $P_-(A)$. The corresponding results for $\alpha<0$ thus follow similarly to the case $\alpha>0$.

We present the asymptotics in three mutually exclusive cases: $\alpha = 0$, $\alpha \in\R \setminus \mathbb{Z}$ and $\alpha \in\mathbb{Z}\setminus \{ 0\}$. We begin with $\alpha = 0$.

\begin{theorem}[zero flux] \label{thm:expansion_zero}
Let $B:\R^2 \to \R$ satisfy Assumption \ref{assump:magnetic_field} and $V:\R^2 \to \R$ satisfy Assumption \ref{assump:potentialV}. Assume $\alpha=0$. Then for all sufficiently small $\varepsilon > 0$, the operator $H(\varepsilon)$ has precisely two negative eigenvalues $\lambda_0^\pm(\varepsilon)$ which satisfy
\begin{align}
\lambda_0^\pm(\varepsilon) = -\exp\left( -  \mu_{\pm} ^{-1} \, \varepsilon^{-1} (1+O(\varepsilon)) \right) \label{eq:thm_zero_flux_asymp}
\end{align}
as $\varepsilon \searrow 0$ with
\begin{align}
\mu_{\pm} = \frac{1}{4\pi}\int_{\R^2} V|\psi_0^\pm |^2 \dd x. 
\end{align}
%where $\psi_0^\pm $ are the virtual bound states at zero defined by \eqref{eq:aharonov_casher_states}.
\end{theorem}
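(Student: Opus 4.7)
The starting point is the block-diagonal decomposition $H(\varepsilon) = H_+(\varepsilon) \oplus H_-(\varepsilon)$ with $H_\pm(\varepsilon) = P_\pm(A) - \varepsilon V$, which reduces the theorem to showing that, for $\alpha = 0$ and sufficiently small $\varepsilon > 0$, each $H_\pm(\varepsilon)$ has exactly one negative eigenvalue $\lambda_0^\pm(\varepsilon)$ obeying \eqref{eq:thm_zero_flux_asymp}. The plan is to use the Birman-Schwinger principle: $\lambda < 0$ is an eigenvalue of $H_\pm(\varepsilon)$ if and only if $1$ is an eigenvalue of the compact operator
\begin{align*}
K_\pm(\lambda) := \varepsilon\, V^{1/2}\bigl(P_\pm(A) - \lambda\bigr)^{-1} V^{1/2},
\end{align*}
and the number of negative eigenvalues of $H_\pm(\varepsilon)$ below $\lambda$ equals the number of eigenvalues of $K_\pm(\lambda)$ strictly above $1$, counted with multiplicity.

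Next I would feed in the resolvent expansion of $P_\pm(A)$ at $\lambda = 0$ from \cite{Kovarik2022} (recalled in Section~\ref{sec:prelim_not}), which in the zero-flux case is governed entirely by the single virtual bound state $\psi_0^\pm = e^{\mp h + i\chi}$. Since $\alpha = 0$, \eqref{eq:h_alpha_log} shows $\psi_0^\pm \in L^\infty(\R^2)$, so Assumption~\ref{assump:potentialV} yields $V^{1/2}\psi_0^\pm \in L^2(\R^2)$. Sandwiching the expansion by $V^{1/2}$ gives
\begin{align*}
K_\pm(\lambda) = \frac{\varepsilon}{-4\pi \ln(-\lambda)} \, \bigl|V^{1/2}\psi_0^\pm\bigr\rangle\bigl\langle V^{1/2}\psi_0^\pm\bigr| + \varepsilon\, T_\pm(\lambda),
\end{align*}
where $T_\pm(\lambda)$ is a bounded remainder as $\lambda \to 0^-$ and $1/(4\pi)$ is the universal constant read off from the expansion. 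I would then apply the Schur-Livsic-Feshbach-Grushin formula (Lemma~\ref{lem:slfg_formula}) with $P$ the orthogonal projection onto $\operatorname{span}(V^{1/2}\psi_0^\pm)$: because $\|\varepsilon T_\pm(\lambda)\|$ stays small uniformly for $\lambda$ near $0^-$, the $(I-P)$-block of $I - K_\pm(\lambda)$ is invertible, and the SLFG formula collapses the eigenvalue condition $1 \in \sigma(K_\pm(\lambda))$ into the scalar equation
\begin{align*}
1 = \frac{\varepsilon\, \mu_\pm}{-\ln(-\lambda)} + \varepsilon\, f_\pm(\lambda,\varepsilon),
\end{align*}
with $f_\pm$ uniformly bounded in the relevant regime, the identification of $\mu_\pm$ coming directly from $\|V^{1/2}\psi_0^\pm\|^2 = \int_{\R^2} V |\psi_0^\pm|^2 \dd x$ and the prefactor $1/(4\pi)$. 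Inverting gives $-\ln(-\lambda) = \mu_\pm\, \varepsilon\, (1 + O(\varepsilon))$, and exponentiating produces \eqref{eq:thm_zero_flux_asymp}. The fact that each $H_\pm(\varepsilon)$ contributes exactly one eigenvalue (so $H(\varepsilon)$ has precisely two) follows because only the rank-one singular part of $K_\pm(\lambda)$ diverges as $\lambda \to 0^-$, giving exactly one crossing of the level $1$ per spin block, while the remainder $\varepsilon T_\pm$ has operator norm strictly less than $1$ for small $\varepsilon$.

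The main obstacle I anticipate is the careful bookkeeping of constants and remainders in the SLFG reduction: one must match the coefficient $1/(4\pi)$ in the logarithmic leading term of the resolvent expansion with the normalization used in \cite{Kovarik2022}, and check that every contribution absorbed into $f_\pm$ is genuinely $O(1)$ as $\lambda \to 0^-$ (not of a weaker order such as $o(-\ln(-\lambda))$), so that after inversion and exponentiation it yields only the multiplicative $(1+O(\varepsilon))$ factor in the exponent rather than shifting the leading behavior. This is where the precise form of the resolvent expansion, and possibly an iteration of the SLFG reduction, enter in an essential way.
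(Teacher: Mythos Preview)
Your overall strategy---Birman--Schwinger principle, sandwiching the resolvent expansion by $V^{1/2}$, SLFG reduction to a scalar equation on the one-dimensional space $\operatorname{span}\{V^{1/2}\psi_0^\pm\}$, then the intermediate value argument for existence and inversion for the asymptotic---is exactly the route the paper takes.

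However, there is a concrete algebraic error that derails the computation. In the resolvent expansion for $\alpha=0$ (Theorem~\ref{thm:resolvent_int_zero}), the logarithm appears as a \emph{multiplicative} factor in front of the rank-one piece, not in the denominator. Thus after sandwiching by $V^{1/2}$ and multiplying by $\varepsilon$ you get
\[
K_\pm(\lambda) \;=\; -\,\frac{\varepsilon\,\log|\lambda|}{4\pi}\,\bigl|V^{1/2}\psi_0^\pm\bigr\rangle\bigl\langle V^{1/2}\psi_0^\pm\bigr| \;+\; \varepsilon\,T_\pm(\lambda),
\]
not $\dfrac{\varepsilon}{-4\pi\ln(-\lambda)}\,|\cdot\rangle\langle\cdot|$ as you wrote. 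Consequently the scalar SLFG equation reads
\[
0 \;=\; 1 \;+\; \varepsilon\,\mu_\pm\,\log|\lambda| \;+\; O(\varepsilon),
\]
whose inversion is $-\log|\lambda| = \mu_\pm^{-1}\varepsilon^{-1}(1+O(\varepsilon))$, matching \eqref{eq:thm_zero_flux_asymp}. Your version $-\ln(-\lambda)=\mu_\pm\,\varepsilon\,(1+O(\varepsilon))$ would give $\lambda\to -1$ rather than $\lambda\to 0$ as $\varepsilon\searrow 0$, which is visibly wrong. Once this is fixed, the remainder of your sketch (including the remark that the rank-one singular part forces exactly one crossing per spin block) goes through essentially verbatim and coincides with the paper's proof.
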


Observe that if $B\equiv 0$, then the free Pauli operator acts as two copies of the free Laplacian $-\Delta$. In that case holds $\psi_0^\pm  \equiv 1$ and \eqref{eq:thm_zero_flux_asymp} becomes just two copies of Simon's weak coupling asymptotic \eqref{eq:simon_asymptotic_proof}.

When the magnetic flux is positive, i.e.\ $\alpha>0$, there exists at least one virtual bound state at zero, possibly a collection of zero eigenstates and a second linear independent virtual bound state at zero if $\alpha$ is an integer. 
We denote by $P_0^-$ the projector onto the zero eigenspace of $P_-(A)$. If the zero eigenspace is non-trivial, then $V^\frac{1}{2} P_0^- V^\frac{1}{2}$ is a non-trivial, bounded, self-adjoint and non-negative linear operator on $L^2(\R^2)$ of finite rank. It will be shown in Section \ref{sec:weak_coupling_proof} that Assumption \ref{assump:potentialV} implies that the rank of $V^\frac{1}{2} P_0^- V^\frac{1}{2}$ is equal to $n$, the dimension of the zero eigenspace of $P_-(A)$, see \eqref{eq:n_aharonov_casher_states}, and $V^\frac{1}{2} P_0^- V^\frac{1}{2}$ has $n$ positive eigenvalues (counted with multiplicities). Let $Q$ denote the orthogonal projection onto $(\operatorname{ran}(V^\frac{1}{2} P_0^- V^\frac{1}{2}))^\perp$, the orthogonal complement of the range of $V^\frac{1}{2} P_0^- V^\frac{1}{2}$. Also, let $\varphi^- \in N_-^\infty \setminus N_-$ be the particular virtual bound state at zero that appears in the resolvent expansion given in Theorem \ref{thm:resolvent_non_int} (see \cite[Sec. 5]{Kovarik2022} for the construction). With these definitions, we can state the weak coupling asymptotics for $\alpha >0$.

For positive, non-integer flux, we have the following theorem.

\begin{theorem}[non-integer flux] \label{thm:expansion_nonint}
Let $B:\R^2 \to \R$ satisfy Assumption \ref{assump:magnetic_field} and $V:\R^2 \to \R$ satisfy Assumption \ref{assump:potentialV}. Assume $\alpha >0$, $\alpha \in \mathbb{R} \setminus \mathbb{Z}$, set $n= \lfloor \alpha \rfloor$ and $\alpha'= \alpha-  \lfloor \alpha \rfloor$. Then for all sufficiently small $\varepsilon > 0$, the operator $H(\varepsilon)$ has precisely $n+1$ negative eigenvalues $\lambda_0(\varepsilon)$, ..., $\lambda_{n}(\varepsilon)$ which satisfy
\begin{align}
\lambda_k(\varepsilon) &= - \mu_k \, \varepsilon \, (1 + O(\varepsilon^{\min\{ \alpha',1-\alpha'\}} )),  \qquad  k=0, ..., n-1, \\
\lambda_{n}(\varepsilon) &=  - \mu_n \, \varepsilon^\frac{1}{\alpha'} \, (1+ O(\varepsilon^{\min\{1,\frac{1}{\alpha'}-1\}} )) \label{eq:expansion_nonint_lambda_n}
\end{align}
as $\varepsilon \searrow 0$. Here, $\{\mu_k \}_{k=0}^{n-1}$ are the positive eigenvalues of $V^\frac{1}{2} P_0^- V^\frac{1}{2}$ and $\mu_n$ is given by
\begin{align}
\mu_n &= \left( \frac{4^{\alpha'-1} \Gamma(\alpha') }{\pi \Gamma(1-\alpha')}\, \Vert Q (V^\frac{1}{2} \varphi^-) \Vert_{L^2(\R^2)}^{2} \right)^\frac{1}{\alpha'}. 
\end{align}
\end{theorem}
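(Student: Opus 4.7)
The plan is to combine the Birman-Schwinger principle with the Kovařík resolvent expansion and the SLFG formula, as outlined in the introduction. For $\lambda<0$, I would introduce $K_\varepsilon(\lambda) := \varepsilon V^{1/2}(P(A)-\lambda)^{-1}V^{1/2}$, so that $\lambda$ is an eigenvalue of $H(\varepsilon)$ iff $K_\varepsilon(\lambda)$ has eigenvalue $1$. Since $H(\varepsilon)$ is block-diagonal and the hypothesis $\alpha>0$ forces $N_+=N_+^\infty=\{0\}$, the spin-up resolvent $(P_+(A)-\lambda)^{-1}$ is uniformly bounded as $\lambda\to 0^-$, so its contribution to $K_\varepsilon(\lambda)$ has norm $O(\varepsilon)$ and cannot produce an eigenvalue $1$ for small $\varepsilon$. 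The problem thus reduces to the spin-down Birman-Schwinger operator, into which I would insert the Kovařík expansion (Theorem \ref{thm:resolvent_non_int}), schematically
$$
(P_-(A)-\lambda)^{-1} \;=\; -\lambda^{-1}P_0^- \;+\; c_{\alpha'}|\lambda|^{\alpha'-1}\,|\varphi^-\rangle\langle\varphi^-| \;+\; R_{\mathrm{reg}}(\lambda),
$$
with $c_{\alpha'} = 4^{\alpha'-1}\Gamma(\alpha')/(\pi\Gamma(1-\alpha'))$ and $R_{\mathrm{reg}}(\lambda)$ bounded as $\lambda\to 0^-$. Conjugation by $V^{1/2}$ yields a three-term expansion of $K_\varepsilon(\lambda)$ whose summands have operator-norm sizes $\varepsilon/|\lambda|$, $\varepsilon|\lambda|^{\alpha'-1}$ and $O(\varepsilon)$.

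Next I would apply the SLFG formula (Lemma \ref{lem:slfg_formula}) with respect to the orthogonal decomposition $L^2(\mathbb{R}^2) = \mathrm{ran}(V^{1/2}P_0^-V^{1/2}) \oplus \mathrm{ran}(Q)$. A preliminary step is to verify, using Assumption \ref{assump:potentialV} and the linear independence of $\psi_0^-,\ldots,\psi_{n-1}^-$ in \eqref{eq:aharonov_casher_states}, that $V^{1/2}P_0^-V^{1/2}$ has rank exactly $n$, with strictly positive eigenvalues $\mu_0,\ldots,\mu_{n-1}$ on its range. On the scale $|\lambda|\asymp\varepsilon$ the $PP$-block of $K_\varepsilon(\lambda)-I$ is of order one while the $QQ$-block equals $-I + O(\varepsilon^{\alpha'})$ and is therefore invertible; eliminating the $Q$-component via the Schur complement reduces the eigenvalue equation to an $n$-dimensional problem $(\varepsilon/|\lambda|)A_0\phi_P = \phi_P + E(\varepsilon,\lambda)\phi_P$ on $\mathrm{ran}(P)$, with $A_0 := V^{1/2}P_0^-V^{1/2}\big|_{\mathrm{ran}(P)}$ and $\|E\| = O(\varepsilon^{\min\{\alpha',1-\alpha'\}})$. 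Standard matrix perturbation around the eigenvalues $\mu_k$ of $A_0$ then yields $\lambda_k(\varepsilon) = -\mu_k\varepsilon(1 + O(\varepsilon^{\min\{\alpha',1-\alpha'\}}))$ for $k=0,\ldots,n-1$.

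For the remaining eigenvalue on the much finer scale $|\lambda|\asymp\varepsilon^{1/\alpha'}$, the ratio $\varepsilon/|\lambda|$ blows up, so the $PP$-block becomes large and invertible with $(K_{PP}-I)^{-1}$ of norm $O(|\lambda|/\varepsilon)$; I would reverse the SLFG reduction and eliminate the $P$-component instead. The Schur complement on $\mathrm{ran}(Q)$ then produces the scalar-type equation
$$
\varepsilon \, c_{\alpha'}\, |\lambda|^{\alpha'-1}\, \|QV^{1/2}\varphi^-\|_{L^2}^{2} \;=\; 1 + O(\varepsilon) + O(\varepsilon^{1/\alpha'-1}),
$$
where the first error comes from $V^{1/2}R_{\mathrm{reg}}(\lambda)V^{1/2}$ and the second from the feedback term $K_{QP}(K_{PP}-I)^{-1}K_{PQ} \sim \varepsilon^{2}|\lambda|^{-2\alpha'}\cdot|\lambda|/\varepsilon$. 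Solving gives \eqref{eq:expansion_nonint_lambda_n} with $\mu_n$ as stated. The exact count of $n+1$ eigenvalues is then obtained by combining these two solution branches with a Birman-Schwinger monotonicity argument ruling out additional small eigenvalues in either regime.

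The principal obstacle I foresee is the uniform bookkeeping of errors across the two asymptotic scales $|\lambda|\asymp\varepsilon$ and $|\lambda|\asymp\varepsilon^{1/\alpha'}$: the remainder $R_{\mathrm{reg}}$ and the virtual-bound-state term $c_{\alpha'}|\lambda|^{\alpha'-1}$ have to be controlled along both, and the SLFG formula must be applied on the \emph{opposite} subspace in each regime so that the Schur complement becomes a small perturbation. Verifying the rank-$n$ property of $V^{1/2}P_0^-V^{1/2}$ under only the decay/positivity hypothesis on $V$, and controlling the cross term $K_{PQ}$ tightly enough that it does not shift the $\mu_k$ above the sharp exponent $\min\{\alpha',1-\alpha'\}$, are the technical steps I expect to demand the most care.
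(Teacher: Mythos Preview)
Your overall strategy---Birman--Schwinger, Kova\v{r}\'{\i}k expansion, SLFG on two reference windows with the Schur complement taken on the \emph{opposite} block in the second window---is exactly the paper's approach. However, there are two genuine gaps in your execution.

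First, your schematic resolvent expansion is missing a singular term. Theorem~\ref{thm:resolvent_non_int} has \emph{three} singular pieces: $-\lambda^{-1}P_0^-$, a term $\sim|\lambda|^{-(1-\alpha')}\,\psi^-\langle\psi^-,\cdot\rangle$ with $\psi^-\in N_-$, and the virtual-bound-state term $\sim|\lambda|^{-\alpha'}\,\varphi^-\langle\varphi^-,\cdot\rangle$. You have dropped the middle $\psi^-$ term, but it is \emph{not} bounded as $\lambda\to 0$ and cannot be absorbed into $R_{\mathrm{reg}}$. On the scale $|\lambda|\asymp\varepsilon$ it contributes $\varepsilon|\lambda|^{-(1-\alpha')}\asymp\varepsilon^{\alpha'}$ on the $PP$-block (since $v\psi^-\in H_n$), and this is precisely the source of the $\varepsilon^{\alpha'}$ half of the claimed error $O(\varepsilon^{\min\{\alpha',1-\alpha'\}})$. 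With only the two terms you wrote, your error analysis cannot produce both exponents. Relatedly, the power on your $\varphi^-$ term is wrong: it should be $|\lambda|^{-\alpha'}$, not $|\lambda|^{\alpha'-1}$; on the scale $|\lambda|\asymp\varepsilon$ the correct $\varphi^-$ contribution is $\varepsilon^{1-\alpha'}$, not $\varepsilon^{\alpha'}$, and on the scale $|\lambda|\asymp\varepsilon^{1/\alpha'}$ only the correct power makes $\varepsilon f_2(\lambda)\asymp 1$.

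Second, your ``scalar-type equation'' in the second window is not actually scalar as written: $\operatorname{ran}(Q)=H_n^\perp$ is infinite-dimensional. The paper handles this by first doing a preliminary SLFG reduction from $L^2(\mathbb{R}^2)$ to the finite-dimensional space $H_{n+1}=\operatorname{span}\{v\psi_k^-\}_{k=0}^{n}$ (cost $O(\varepsilon^2)$), and only then splitting $H_{n+1}=H_n\oplus G$ with $G$ one-dimensional. You need this extra step (or an equivalent argument isolating the single direction $Q(v\varphi^-)$ inside $H_n^\perp$) before the second-window Schur complement becomes a genuine scalar equation.
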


Note that if $0< \alpha < 1$, then $\alpha'=\alpha$, $n=0$ and the Pauli operator has trivial eigenspace, i.e.\ $P_0^- = 0$ and $\varphi^- = \psi_0^-$. In that case, the second order term of $\lambda_n(\varepsilon)= \lambda_0(\varepsilon)$ can be improved. The asymptotic expansion of $\lambda_0(\varepsilon)$ is then given by
\begin{align}
\lambda_0(\varepsilon) = - \left( \frac{4^{\alpha'-1} \Gamma(\alpha') }{\pi \Gamma(1-\alpha')}\, \int_{\R^2} V |\psi_0^-|^2 \dd x \right)^\frac{1}{\alpha'}  \varepsilon^\frac{1}{\alpha'}  \, (1+ O(\varepsilon ))  \label{eq:expansion_alphaless1}
\end{align}
as $\varepsilon \searrow 0$.

If the flux is positive and integer, let $\varphi_1^-$, $\varphi_2^- \in N_-^\infty \setminus N_-$ be the two particular virtual bound states at zero that appear in the resolvent expansion given in Theorem \ref{thm:resolvent_int} (see \cite[Sec. 6]{Kovarik2022} for the construction) and let $Q$ be again the orthogonal projection onto $(\operatorname{ran}(V^\frac{1}{2} P_0^- V^\frac{1}{2}))^\perp$. Moreover, let $\tilde{Q}$ be the orthogonal projection onto $(\operatorname{ran}(V^\frac{1}{2} P_0^- V^\frac{1}{2})+ \operatorname{span}\{ V^\frac{1}{2}\varphi_2^- \})^\perp$ and let $K$ be the linear operator also defined in Theorem \ref{thm:resolvent_int}. Then, we have the next theorem. 

\begin{theorem}[integer flux] \label{thm:expansion_int}
Let $B:\R^2 \to \R$ satisfy Assumption \ref{assump:magnetic_field} and $V:\R^2 \to \R$ satisfy Assumption \ref{assump:potentialV}. Assume $\alpha>0$, $\alpha \in \mathbb{Z}$ and set $n = \alpha -1$. Then for all sufficiently small $\varepsilon > 0$, the operator $H(\varepsilon)$ has precisely $n+2$ negative eigenvalues $\lambda_0(\varepsilon)$, ..., $\lambda_{n+1}(\varepsilon)$ which satisfy
\begin{align}
\lambda_k(\varepsilon) &= - \mu_k \, \varepsilon  \left(1+ O\left(|\log \varepsilon|^{-1} \right) \right),  \qquad k=0, ...,n-1, \\
\lambda_{n}(\varepsilon) &=  - \mu_n \frac{\varepsilon}{|\log \varepsilon|} \left(1 + O\left( \frac{\log|\log \varepsilon|}{|\log \varepsilon|} \right) \right) , \\
\lambda_{n+1}(\varepsilon) &= - \exp\left( -\mu_{n+1}^{-1}  \varepsilon^{-1} \left(1 + O\left(\varepsilon
\right) \right) \right)  
\end{align}
as $\varepsilon \searrow 0$. Here, $\{ \mu_k \}_{k=0}^{n-1}$ are given by the non-zero eigenvalues of $V^\frac{1}{2} P_0^- V^\frac{1}{2}$ and $\mu_n$, $\mu_{n+1}$ are given by
\begin{align}
\mu_n &= \frac{1 }{\pi }\, \Vert \phi_2 \Vert_{L^2(\R^2)}^{2}, \qquad \phi_2 =  Q  (V^\frac{1}{2} \varphi_2^-), \label{eq:mu_n_thm_int}\\
\mu_{n+1} &= \frac{\langle \phi_1 , V^\frac{1}{2} K V^\frac{1}{2}  \, \phi_1 \rangle}{\Vert \phi_1 \Vert_{L^2(\R^2)}^{2} }, \qquad \phi_1 =  \tilde{Q}  (V^\frac{1}{2} \varphi_1^-).\label{eq:mu_nplus1_thm_int}
\end{align}
\end{theorem}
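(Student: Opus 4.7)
The plan is to apply the Birman-Schwinger principle combined with the resolvent expansion of Theorem~\ref{thm:resolvent_int} and iterated use of the Schur-Livsic-Feshbach-Grushin (SLFG) formula, Lemma~\ref{lem:slfg_formula}. Since $\alpha>0$ implies $N_+ = N_+^\infty = \{0\}$, the spin-up resolvent $(P_+(A)+\lambda)^{-1}$ stays uniformly bounded as $\lambda\searrow 0$, so the associated Birman-Schwinger operator $\varepsilon V^{1/2}(P_+(A)+\lambda)^{-1}V^{1/2}$ has operator norm $O(\varepsilon)$ and cannot have eigenvalue $1$ for small $\varepsilon$. Hence the spin-up block produces no negative eigenvalues of $H(\varepsilon)$, and every negative eigenvalue $-\lambda$ of $H(\varepsilon)$ is characterized by the condition that
\begin{equation*}
\mathcal{K}(\lambda,\varepsilon) := \varepsilon\, V^{1/2}(P_-(A)+\lambda)^{-1}V^{1/2}
\end{equation*}
has $1$ as an eigenvalue. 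I will show that precisely $n+2$ values of $\lambda>0$ have this property and extract their asymptotics.

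First I insert the integer-flux resolvent expansion from Theorem~\ref{thm:resolvent_int}, which as $\lambda\searrow 0$ decomposes $\mathcal{K}(\lambda,\varepsilon)$ into a $\lambda^{-1}$ singular part with residue $\varepsilon\, V^{1/2}P_0^-V^{1/2}$ of rank $n$, two rank-one pieces carrying the logarithmic singularities whose ranges sit inside $\operatorname{span}(V^{1/2}\varphi_1^-)$ and $\operatorname{span}(V^{1/2}\varphi_2^-)$, a bounded regular part containing $\varepsilon\, V^{1/2}KV^{1/2}$, and a small remainder. I then split $L^2(\R^2) = \mathcal{H}_0 \oplus \mathcal{H}_2 \oplus \mathcal{H}_1 \oplus \mathcal{H}_\perp$, where $\mathcal{H}_0 = \operatorname{ran}(V^{1/2}P_0^-V^{1/2})$ (dimension $n$), $\mathcal{H}_2 = \operatorname{span}(\phi_2)$, $\mathcal{H}_1 = \operatorname{span}(\phi_1)$ and $\mathcal{H}_\perp$ the remainder. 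Assumption~\ref{assump:potentialV} guarantees that $V^{1/2}P_0^-V^{1/2}$ has rank exactly $n$ and that $\phi_1,\phi_2\neq 0$, so the decomposition is non-degenerate. On $\mathcal{H}_\perp$ the operator $I-\mathcal{K}(\lambda,\varepsilon)$ is uniformly invertible, and a first SLFG reduction replaces the eigenvalue problem by an equivalent one on the $(n+2)$-dimensional Schur complement living on $\mathcal{H}_0\oplus\mathcal{H}_2\oplus\mathcal{H}_1$.

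A second iteration of SLFG isolates each of the three subspaces in turn, producing three transcendental equations of different asymptotic character. The block acting on $\mathcal{H}_0$ gives, at leading order, $\lambda I = \varepsilon\, V^{1/2}P_0^-V^{1/2}$, whence the $n$ eigenvalues $\lambda_k = \mu_k \varepsilon$ with $\{\mu_k\}_{k=0}^{n-1}$ the positive eigenvalues of $V^{1/2}P_0^-V^{1/2}$; the off-diagonal coupling to $\mathcal{H}_2$ through the $1/\log\lambda$ term furnishes the $O(|\log\varepsilon|^{-1})$ correction. The block on $\mathcal{H}_2$, after eliminating the other directions, reduces to the scalar equation $\lambda\sim\pi^{-1}\|\phi_2\|^2\,\varepsilon/|\log\lambda|$, whose unique small positive solution is $\lambda_n = \mu_n\varepsilon/|\log\varepsilon|$ with the stated double-logarithmic remainder. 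The block on $\mathcal{H}_1$ takes the form $1 = \varepsilon\mu_{n+1}|\log\lambda|(1+O(\varepsilon))$, solved by $\lambda_{n+1} = \exp(-\mu_{n+1}^{-1}\varepsilon^{-1}(1+O(\varepsilon)))$. A counting/monotonicity argument for $\mathcal{K}(\cdot,\varepsilon)$ in $\lambda$ on each sector guarantees that no further negative eigenvalues appear.

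The main obstacle is the careful bookkeeping across the three very different asymptotic scales, especially for $\lambda_{n+1}$: since $\lambda\sim e^{-c/\varepsilon}$ implies $|\log\lambda|\sim\varepsilon^{-1}$, terms of the resolvent expansion that are formally subleading in $\lambda$ become, after multiplication by $\varepsilon$, of the same order as the leading ones, so one must verify that each Schur complement remains well-defined and that the off-diagonal couplings between $\mathcal{H}_0$, $\mathcal{H}_1$, $\mathcal{H}_2$ contribute only to the prescribed error terms. A second delicate point is to check that the projections $Q$ and $\tilde Q$ entering the definitions of $\phi_1$ and $\phi_2$ genuinely diagonalize the two rank-one logarithmic singularities; this is exactly what decouples the block structure in the iterated SLFG reduction and makes the three scales separate cleanly.
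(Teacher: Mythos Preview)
Your approach is essentially the paper's own: Birman--Schwinger reduction to $K_\varepsilon^-(\lambda)=1-\varepsilon v(P_-(A)-\lambda)^{-1}v$, insertion of the integer-flux resolvent expansion (Theorem~\ref{thm:resolvent_int}), and iterated SLFG reduction onto the $(n+2)$-dimensional space spanned by $\{v\psi_k^-\}_{k=0}^{n-1}\cup\{v\varphi_1^-,v\varphi_2^-\}$. Two points are worth flagging.

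First, a small inaccuracy: you describe the logarithmic singularities as ``two rank-one pieces'', but the term $f_2(\lambda)\,vKv$ is \emph{not} rank one; $K$ involves $\Pi_{11},\Pi_{12},\Pi_{21},\Pi_{22}$ and $\psi^-\langle\psi^-,\cdot\rangle$, so its range sits in all of $H_{n+2}$, not just $\operatorname{span}(v\varphi_1^-)$. This is why the coefficient $\mu_{n+1}$ involves the full operator $vKv$ rather than only $\Pi_{11}$, and why your claim that $Q,\tilde Q$ ``diagonalize'' the singularities is not quite the mechanism at work.

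Second, and more substantively, the paper does not perform one fixed sequential SLFG elimination. Instead it introduces three \emph{reference windows} for $\lambda$, namely $\varepsilon f_0(\lambda)\asymp 1$, $\varepsilon f_1(\lambda)\asymp 1$, and $\varepsilon f_2(\lambda)\asymp 1$, and in each window chooses a \emph{different} block decomposition of $S_\varepsilon^-(\lambda)$ and inverts a \emph{different} block (sometimes the $H_n$ block, sometimes its complement, and in the third window the $H_{n+1}$ block). This is precisely what resolves the obstacle you correctly identify: in the third window $\varepsilon f_0(\lambda)\asymp\varepsilon e^{1/\varepsilon}$ and $\varepsilon f_1(\lambda)\asymp\varepsilon^2 e^{1/\varepsilon}$ are huge, so the $H_{n+1}$ block is invertible with inverse of size $\varepsilon^{-2}e^{-1/\varepsilon}$, and the Schur complement on $G_1=H_{n+2}\ominus H_{n+1}$ is well defined. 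Your proposal gestures at this (``three transcendental equations of different asymptotic character'') but does not make the window-dependent choice of which block to invert explicit; without it, the iterated SLFG as you describe it would not go through uniformly.
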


The proofs of Theorems \ref{thm:expansion_zero}, \ref{thm:expansion_nonint} and \ref{thm:expansion_int} are given in Section \ref{sec:weak_coupling_proof}. We conclude this section with several remarks. 

In the case of radial $B$ and $V$, our asymptotic expansions reproduce those found by Frank, Morozov and Vugalter in \cite{Frank2010}, where the coefficients $\{\mu_k \}$ are given more explicitly in terms of the (generalized) Aharonov-Casher states $\{ \psi_k^\pm \}$ and the second order error terms are slightly improved. This can be explained by observing that the (generalized) Aharonov-Casher states are pairwise orthogonal in case of radial fields. The missing interaction between the (generalized) Aharonov-Casher states eventually leads to simpler expressions for the coefficients $\{\mu_k \}$ and less pollution of the eigenvalue asymptotics in second order. We refer the reader to the appendix for a detailed discussion of the case of radial fields.

The asymptotic expansion \eqref{eq:expansion_alphaless1} of $\lambda_0(\varepsilon)$ for $0<\alpha<1$ coincides with that found by Frank, Morozov and Vugalter for radial fields, but \eqref{eq:expansion_alphaless1} holds for general, potentially non-radial $B$ and $V$. The improvement of the second order error term from $O(\varepsilon^{\min\{1,\frac{1}{\alpha'}-1\}} )$ in \eqref{eq:expansion_nonint_lambda_n} to $O(\varepsilon)$ in \eqref{eq:expansion_alphaless1} is again due to absence of interaction of (generalized) Aharonov-Casher states. In this case however, the absence of interaction between (generalized) Aharonov-Casher states has the trivial reason that there is only a single generalized Aharonov-Casher state.

We expect that the Assumptions \ref{assump:magnetic_field} and \ref{assump:potentialV} can be weakened. The stated regularity and decay behaviour of the magnetic field $B$ are sufficient conditions for the resolvent expansions of the Pauli operator we cite. It is expected that the resolvent expansions hold under weaker assumptions on $B$ (although $B$ should at least be in $L^1(\R^2)$). Such weaker assumptions would then also be applicable here. The decay behaviour of the potential $V$ we assume is also dictated by the resolvent expansions. The core issue here is that the resolvent expansions are given in terms of bounded operators between certain weighted $L^2$-spaces and not the usual $L^2(\R^2)$ space. Since our proofs rely on the Birman-Schwinger principle, a certain decay of $V$ is required to be able to carry over these resolvent expansions to the Birman-Schwinger operators $V^{\frac{1}{2}} (P_\pm(A)-\lambda)^{-1} V^{\frac{1}{2}}$ acting on $L^2(\R^2)$.

Moreover, we have assumed that the potential $V$ is point-wise non-negative and fulfills $\int V \dd x > 0$. For a sign-indefinite potential, the problem of computing weak coupling eigenvalue asymptotics remains open. One major difficulty comes from the fact that the number of negative eigenvalues that appear under weak coupling is then not necessarily equal to the number of Aharonov-Casher plus generalized Aharonov-Casher states, but instead given by the number of non-negative eigenvalues of the matrix
$\hat{V} = \left(\int V \overline{\psi_k} \psi_l \dd x \right)_{k,l=0}^{n \text{ or } n+1}$ where $\{ \psi_k \}_{k=0}^{n \text{ or } n+1}$ are the (generalized) Aharonov-Casher states, see Weidl \cite{Weidl1999}. We think that if $\hat{V}$ has only positive eigenvalues, the assumption that $V$ is non-negative is not necessary and similar asymptotic expansions should continue to hold for such sign-indefinite potentials $V$.

Clearly, our results must cease to hold as soon as $\hat{V}$ exhibits a negative eigenvalue. When the sign of $V$ is negative (i.e.\ a repulsive potential is coupled), recent results by Breuer and Kova{\v{r}}{\'{\i}}k \cite{Breuer2023} show the appearance of resonances. In the generic sign-indefinite case, we expect that $H(\varepsilon)$ exhibits a mix of bound states as well as resonances with asymptotic expansions that not necessarily exhibit the same leading order behaviour as derived here or in \cite{Breuer2023}.

\section{Preliminaries and Notation} \label{sec:prelim_not}

\subsection{Notation} \label{subsec:notation}

Let $X$ be a set and let $f_1, f_2: X \to \R$ be two functions. We write $f_1(x) \lesssim f_2(x)$ if there exists a constant $c>0$ such that $f_1(x) \leq c f_2(x)$ for all $x\in X$. We write $f_1(x) \asymp f_2(x)$ if $f_1(x) \lesssim f_2(x)$ and $f_2(x) \lesssim f_1(x)$.

For two Hilbert spaces $H$, $H'$ let $\mathcal{L}(H, H')$ denote the Banach space of bounded linear operators from $H$ to $H'$. For brevity we also write $\mathcal{L}(H)$ for $\mathcal{L}(H, H)$.

For a subspace $U\subset H$, let $U^\perp = H \ominus U$ denote its orthogonal complement and let $Q_U$ be the orthogonal projector onto $U$.

For a linear operator $A\in\mathcal{L}(H)$ and two subspaces $U,V \subset H$ we define $A|_{U\to V} \in \mathcal{L}(U,V)$ by $A|_{U\to V} : U \to V$, $x \mapsto Q_V Ax$. This operator is the component of $A$ that acts on $U$ and maps into $V$.

In Section \ref{subsec:resolvent_expansions}, we will recall resolvent expansions of the operators $P_\pm(A)$ given in terms of operators acting on weighted $L^2$-spaces. For this purpose, let $L^{2,s}(\R^2)$, $s\in\mathbb{R}$, denote the weighted $L^2$-space equipped with the norm
\begin{align}
\Vert  u \Vert_{2,s} = \Vert (1+|\cdot |^2)^\frac{s}{2} u \Vert_{L^2(\R^2)}. \nonumber
\end{align}
We denote by $\mathcal{B}(s,s')$ the space of bounded linear operators from $L^{2,s}(\R^2)$ to $L^{2,s'}(\R^2)$. Furthermore, for $s\in\mathbb{R}$ and $u\in L^{2,s}(\R^2)$, $v\in L^{2,-s}(\R^2)$, let $u \langle v, \, \cdot \,\rangle $ denote the linear operator in $\mathcal{B}(s,-s)$ that acts as
\begin{align}
u \langle v, f\rangle := u \int_{\R^2}\overline{v} f \dd x. \nonumber
\end{align}

\subsection{Resolvent expansions} \label{subsec:resolvent_expansions}

At the core of the following proofs are resolvent expansions for $P_\pm(A)$ derived in \cite{Kovarik2022}. There are three cases that need to be distinguished. The first case is $\alpha = 0$, where the resolvents of $P_\pm(A)$ look similar for the $+$ and $-$ case.

\begin{theorem}[Cor. 7.6 \cite{Kovarik2022}]\label{thm:resolvent_int_zero}
Let $\alpha=0$. Then
\begin{align}
(P_{\pm}(A)-\lambda)^{-1}= - \frac{\log  \lambda }{4 \pi}  \psi_0^\pm \langle \psi_0^\pm, \, . \,  \rangle   +O(1) \nonumber
\end{align}
holds in $\mathcal{B}(s,-s)$, $s>3$, as $\lambda \rightarrow 0$, $\operatorname{Im}(\lambda)\geq 0$. 
\end{theorem}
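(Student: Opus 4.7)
The plan is to follow the Grushin/Feshbach strategy that also underlies the eigenvalue analysis later in the paper, combined with the standard threshold expansion of the free-Laplacian resolvent in weighted $L^2$-spaces. First I would write $P_\pm(A) = -\Delta + W_\pm$ with $W_\pm = 2iA\cdot\nabla + i(\nabla\cdot A) + |A|^2 \pm B$. The zero-flux condition $\alpha = 0$, together with Assumption \ref{assump:magnetic_field}, forces $h(x) = O(|x|^{-1})$ and $A(x) = O(|x|^{-2})$ at infinity, so $W_\pm$ is a genuine short-range perturbation. By the second resolvent identity, in a symmetrised form after separating the first-order and zeroth-order parts of $W_\pm$, the full resolvent reads
\[
(P_\pm - \lambda)^{-1} = (-\Delta - \lambda)^{-1}\bigl(I + W_\pm (-\Delta - \lambda)^{-1}\bigr)^{-1}.
\]

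Next I would insert the standard expansion
\[
(-\Delta - \lambda)^{-1} = -\frac{\log \lambda}{4\pi}\, \mathbf{1}\langle \mathbf{1},\cdot\rangle + G_0 + o(1)
\]
in $\mathcal{B}(s,-s)$, $s>1$, where $G_0$ is the renormalised limiting free resolvent, and then invert $I + W_\pm(-\Delta-\lambda)^{-1}$ via the SLFG formula (Lemma \ref{lem:slfg_formula}), splitting the space according to the one-dimensional subspace picked out by $\mathbf{1}$. The Schur complement is invertible by a Neumann series in the orthogonal complement because $W_\pm$ has no actual $L^2$-resonance at zero (only a bounded one), and a direct inspection of the Grushin inversion shows that the singular contribution to $(P_\pm - \lambda)^{-1}$ is rank one, of order $\log\lambda$, with coefficient $-(4\pi)^{-1}$ times the outer product $\psi\langle\psi,\cdot\rangle$ where $\psi := (I + (-\Delta)^{-1}W_\pm)^{-1}\mathbf{1}$.

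The decisive identification is $\psi = \psi_0^\pm$. This follows because $P_\pm(A)\psi_0^\pm = 0$ can be rewritten $(-\Delta + W_\pm)\psi_0^\pm = 0$, so $\psi_0^\pm - \mathbf{1}$ solves $-\Delta(\psi_0^\pm - \mathbf{1}) = -W_\pm \psi_0^\pm$, yielding $\psi_0^\pm = \mathbf{1} - (-\Delta)^{-1} W_\pm \psi_0^\pm$ in the appropriate weighted sense, i.e.\ $\psi = \psi_0^\pm$ as claimed. Everything outside the rank-one term is $O(1)$ in $\mathcal{B}(s,-s)$ because $G_0$ is bounded there and the Grushin blocks are uniformly bounded as $\lambda \to 0$.

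The main obstacle is technical rather than conceptual: factorising the first-order perturbation $W_\pm$ in a way compatible with the Birman-Schwinger/Grushin reduction, and then carrying the argument through rigorously in the weighted spaces $L^{2,\pm s}(\R^2)$. One must justify that $(-\Delta)^{-1} W_\pm \psi_0^\pm$ lies in $L^{2,-s}$ even though neither $\mathbf{1}$ nor $W_\pm \mathbf{1}$ is in $L^2$, verify the uniform invertibility of the Schur block, and confirm that all sub-leading contributions remain $O(1)$ in the target norm. The decay $\sigma > 3$ on the weight and the hypothesis $\rho > 7/2$ on $B$ in Assumption \ref{assump:magnetic_field} are precisely what allow these weighted estimates to close; once they are in place, the identification of the coefficient reduces to the short algebraic manipulation above.
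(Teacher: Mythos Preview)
This theorem is not proved in the present paper at all: it is quoted verbatim as Corollary~7.6 of Kova\v{r}\'{\i}k~\cite{Kovarik2022} and used as a black box. The paper explicitly says at the start of Section~\ref{subsec:resolvent_expansions} that ``at the core of the following proofs are resolvent expansions for $P_\pm(A)$ derived in \cite{Kovarik2022}'', and then lists Theorems~\ref{thm:resolvent_int_zero}--\ref{thm:resolvent_int} without proof. So there is no ``paper's own proof'' to compare your proposal against.

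That said, your outline is the standard Jensen--Kato/Jensen--Nenciu strategy for threshold resolvent expansions and is broadly in line with how such results are obtained in the cited reference. Two points deserve more care than you give them. First, the gauge: you write $\psi_0^\pm - \mathbf{1}$ and solve a Lippmann--Schwinger equation, but $\psi_0^\pm = e^{\mp h + i\chi}$ carries the phase $e^{i\chi}$, so the constant at infinity is not simply $\mathbf{1}$; the identification of the rank-one coefficient must track this normalisation. Second, the first-order piece $2iA\cdot\nabla$ of $W_\pm$ is not a multiplication operator, so the ``symmetrised'' factorisation you allude to needs to be spelled out (typically via a matrix-valued $W$ or by passing to a suitable sandwich with $\langle x\rangle^{-s}$). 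Neither issue is fatal, but both are precisely where the work lies; your proposal correctly flags them as technical but does not resolve them.
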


The branch of the complex logarithm is chosen such that $\log  \lambda = \log  |\lambda| + i \pi$ for $\lambda< 0$. Therefore, one can also write
\begin{align}
(P_{\pm}(A)-\lambda)^{-1}= - \frac{\log  |\lambda |}{4 \pi}  \psi_0^\pm \langle \psi_0^\pm, \, . \,  \rangle   +O(1) \nonumber
\end{align}
as $\lambda \rightarrow 0$, instead.

If the magnetic flux is positive, the situation is different for $P_+(A)$ and $P_-(A)$. For the resolvent of the spin-up component we have the following simple expansion.

\begin{theorem}[Prop. 5.11, Prop. 6.8 \cite{Kovarik2022}]\label{thm:resolvent_plus}
Let $\alpha>0$. Then
\begin{align}
(P_{+}(A)-\lambda)^{-1}= O(1) \nonumber
\end{align}
holds in $\mathcal{B}(s,-s)$, $s>3$, as $\lambda \rightarrow 0$, $\operatorname{Im}(\lambda)\geq 0$. 
\end{theorem}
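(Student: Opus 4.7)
The goal is to translate the absence of threshold singularities of $P_+(A)$ into a uniform resolvent bound. Since $\alpha > 0$, the earlier description of zero modes yields $N_+ = N_+^\infty = \{0\}$: $P_+(A)$ has neither a zero eigenvalue nor a bounded zero mode. I would exploit the supersymmetric factorization of the Pauli operator to reduce the claim to the (already much-studied) behaviour of the resolvent of $P_-(A)$.

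Setting $\pi_j = -i\partial_j + A_j$ and $Q = \pi_1 - i\pi_2$, one checks that $QQ^* = P_+(A)$ and $Q^*Q = P_-(A)$. The standard intertwining $T(T^*T - \lambda)^{-1} = (TT^* - \lambda)^{-1}T$ then gives the operator identity
\begin{align}
Q (P_-(A) - \lambda)^{-1} Q^* = I + \lambda (P_+(A) - \lambda)^{-1} \nonumber
\end{align}
on the common resolvent set. It therefore suffices to show $Q(P_-(A) - \lambda)^{-1} Q^* - I = O(\lambda)$ in $\mathcal{B}(s,-s)$ as $\lambda \to 0$. I would achieve this by inserting a resolvent expansion for $P_-(A)$ whose singular part consists of rank-one operators built on the zero eigenstates $\psi \in N_-$ and the virtual bound states $\varphi^- \in N_-^\infty$, plus a regular remainder with an explicit $O(\lambda)$ error. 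The key observation is that $Q$ annihilates every such mode: for $L^2$-eigenstates $\|Q\psi\|^2 = \langle \psi, P_-(A)\psi\rangle = 0$; for the bounded virtual bound states, the explicit form $u = e^{h+i\chi} v$ with $v$ polynomial in $x_1 - ix_2$, together with the fact that in the chosen gauge $Q$ acts as (a conjugate of) the Cauchy--Riemann operator $2\bar\partial$, forces $Qu = 0$ irrespective of $L^2$-integrability. Hence the entire singular part of $(P_-(A) - \lambda)^{-1}$ is eliminated by the sandwich $Q(\cdots)Q^*$, leaving only $Q G_0(\lambda) Q^*$ with $G_0(\lambda)$ the regular remainder. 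At $\lambda = 0$, $G_0(0)$ acts as the pseudo-inverse $P_-(A)^+$; since $Q^*$ has range contained in $N_-^\perp$ (as $\ker Q = N_-$) and $P_+(A) = QQ^*$ has trivial kernel, one verifies $Q G_0(0) Q^* = I$, and the $O(\lambda)$ correction follows from the expansion of $G_0(\lambda)$.

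The main obstacle is the bookkeeping between the weighted spaces $L^{2,s}$. The operators $Q$ and $Q^*$ are first-order differential operators with coefficients of size $|A(x)|$, so they typically map $L^{2,s}$ only into a space of slightly weaker decay. One must verify that the composition $Q \cdot (\cdot) \cdot Q^*$ nevertheless remains bounded in $\mathcal{B}(s,-s)$ under Assumption \ref{assump:magnetic_field}, and that the resolvent expansions for $P_-(A)$ cited in Theorems \ref{thm:resolvent_non_int} and \ref{thm:resolvent_int} admit a regular remainder with an $O(\lambda)$ error in the appropriate weighted-space norm. Should the supersymmetric route hit a snag at this level, an alternative is to construct the zero-energy Green's function of $P_+(A)$ directly from the absence of zero modes, via inversion of the underlying weighted Cauchy--Riemann operator $\bar\partial$, and then extend to small $\lambda \neq 0$ by a Neumann series in $\lambda P_+(A)^{-1}$.
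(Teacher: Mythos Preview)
The paper does not prove this statement; it is simply quoted from Kova\v{r}\'{\i}k's paper (Propositions 5.11 and 6.8 there), so there is no proof here to compare against. That said, let me comment on your supersymmetric strategy on its own merits.

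The factorization $P_+(A)=QQ^*$, $P_-(A)=Q^*Q$ and the resulting identity $\lambda(P_+(A)-\lambda)^{-1}=Q(P_-(A)-\lambda)^{-1}Q^*-I$ are correct, and your observation that $Q$ annihilates \emph{all} bounded zero modes of $P_-(A)$ (not just the $L^2$ ones) is the right structural point. However, there is a genuine gap in the final step. After the singular terms are killed you are left with $(P_+(A)-\lambda)^{-1}=\lambda^{-1}\bigl[QG_0(\lambda)Q^*-I\bigr]$, and to conclude $O(1)$ you need $QG_0(\lambda)Q^*=I+O(\lambda)$. But the expansions quoted in this paper (Theorems~\ref{thm:resolvent_non_int} and~\ref{thm:resolvent_int}) only assert that the regular remainder $G_0(\lambda)$ is $O(1)$ in $\mathcal{B}(s,-s)$; they do \emph{not} give $G_0(\lambda)=G_0(0)+O(\lambda)$ with a well-defined limit. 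Without that finer information your identity yields at best $(P_+(A)-\lambda)^{-1}=O(\lambda^{-1})$, which is no improvement at all. You would need to go back into Kova\v{r}\'{\i}k's construction and extract the next-order term of the regular part of the $P_-$ resolvent---at which point you are essentially redoing the analysis that produces Prop.~5.11/6.8 directly.

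The weighted-space issue you flag is also real and not merely bookkeeping: $Q$ and $Q^*$ are first-order differential operators, so sandwiching an element of $\mathcal{B}(s,-s)$ between them does not a priori land back in $\mathcal{B}(s,-s)$; one loses Sobolev regularity, and the $O(1)$ remainder in the $P_-$ expansion carries no smoothing information that would compensate. Kova\v{r}\'{\i}k's direct route---building the $P_+$ parametrix from scratch, exploiting that $N_+^\infty=\{0\}$ so there is no threshold obstruction---avoids both problems. Your supersymmetric idea is a nice heuristic for \emph{why} the result should hold, but as a proof it leans on inputs at least as hard as the conclusion.
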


For the other component, $P_-(A)$, the situation is more complicated. It requires us to adopt a bit of notation of \cite{Kovarik2022} to state the resolvent expansions. 

For $0<t\in \R \setminus \mathbb{Z}$ let us define
\begin{align} \label{eq:zeta_omega_definition}
\zeta(t) = - \frac{4^{t-1} \Gamma(t) e^{i\pi t}}{\pi \Gamma(1-t)}, \qquad \omega(t) = \frac{|d_n^-|^2}{\zeta(t) }
\end{align}
with $d_n^- \in \mathbb{C}$. The precise definition of $d_n^-$ is found in \cite{Kovarik2022}, but it is not relevant for the results nor the proofs presented in this paper. Furthermore, let $P_0^-$ denote the orthogonal projector onto the zero eigenspace of $P_-(A)$, let $\psi^-\in N_-$ denote a particular zero eigenfunction of $P_-(A)$ and $\varphi^- \in N_-^\infty \setminus N_-$ a particular virtual bound state at zero. For the precise definition of the states $\psi^-$ and $\varphi^-$ we refer to \cite[Sec. 5]{Kovarik2022}. We emphasize that they are certain linear combinations of the (generalized) Aharonov-Casher states $\{\psi_k^-\}_{k=0}^n$.

Then, the following asymptotic expansion for the resolvent is valid for non-integer magnetic flux.

\begin{theorem}[Thm. 5.6 \cite{Kovarik2022}] \label{thm:resolvent_non_int}
Let $0<\alpha \in \R \setminus \mathbb{Z}$ and $\alpha'=\alpha- \lfloor \alpha \rfloor$. Then there are constants $\tau, \rho \in\R$ such that
\begin{align}
(P_-(A)-\lambda)^{-1}= -\lambda^{-1} P_0^- + \frac{\omega(1+\alpha') \lambda^{\alpha'-1}}{1+\tau \omega(1+\alpha') \lambda^{\alpha'}} \psi^- \langle \psi^-, . \rangle - \frac{\zeta(\alpha') \lambda^{-\alpha'}}{1+\rho \zeta(\alpha') \lambda^{1-\alpha'}} \varphi^- \langle \varphi^-, . \rangle  + O(1)  \nonumber
\end{align}
holds in $\mathcal{B}(s,-s)$, $s>3$, $\lambda \rightarrow 0$, $\operatorname{Im}(\lambda)\geq 0$. If $\alpha<1$, then the zero eigenspace of $P_-(A)$ is trivial and the asymptotic expansion holds with $P_0^- = 0$ and $\psi^-=0$.
\end{theorem}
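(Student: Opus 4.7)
The plan is to derive the expansion by combining a first-order factorization of $P_-(A)$ with a Schur-Livsic-Feshbach-Grushin reduction onto the finite-dimensional subspace carrying the zero-energy singularities. After unitary gauge conjugation by $e^{i\chi}$ it suffices to work with $P_-(A_h)$, which admits a Cauchy-Riemann factorization of the form $P_-(A_h)=2\,D^*D$, with $D$ a twisted $\bar\partial$-operator whose kernel on reasonable spaces consists of $e^h$ times holomorphic functions. This immediately recovers the Aharonov-Casher picture: the $L^2$-kernel $N_-$, spanned by $\psi_k^-$ with $k\le n-1$, and, for $\alpha'\in(0,1)$, one additional bounded zero mode $\psi_n^-\sim|x|^{-\alpha'}$ outside $L^2$. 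The states $\psi^-\in N_-$ and $\varphi^-\in N_-^\infty\setminus N_-$ appearing in the theorem are then distinguished representatives (linear combinations of the $\psi_k^-$) adapted to diagonalize the low-energy problem; in particular $\psi^-$ should be chosen to have maximal decay index $n-1$, giving $\psi^-\sim|x|^{-1-\alpha'}$ at infinity.

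Next I would decompose $L^2(\R^2)$ via the spectral projector $P_0^-$, producing the pole $-\lambda^{-1}P_0^-$ at once. On $(\operatorname{ran} P_0^-)^\perp$ the inversion of $P_-(A)-\lambda$ is organized via an augmented Grushin matrix
\[
\mathcal{E}(\lambda):=\begin{pmatrix}P_-(A)-\lambda & R_- \\ R_+ & 0\end{pmatrix},
\]
where $R_\pm$ are rank-two coupling operators built from $\psi^-$ and $\varphi^-$ viewed as elements of $L^{2,-s}(\R^2)$. For $\operatorname{Im}\lambda\ge 0$ small, $\mathcal{E}(\lambda)$ is invertible in the weighted setting, and its Schur complement yields $(P_-(A)-\lambda)^{-1}=E(\lambda)-E_+(\lambda)\,E_{-+}(\lambda)^{-1}\,E_-(\lambda)$, in which $E(\lambda)=O(1)$ in $\mathcal{B}(s,-s)$ and $E_{-+}(\lambda)$ is an explicit $2\times 2$ matrix collecting low-energy pairings of the scalar Green's function obtained after factorization against $\psi^-$ and $\varphi^-$.

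The singular structure of $E_{-+}(\lambda)^{-1}$ is then extracted by computing those pairings. Because the power-law tails of $\varphi^-$ and $\psi^-$ feed into radial integrals of Macdonald-kernel type, their small-$\lambda$ asymptotics produce exactly the combinations $\Gamma(\alpha')\Gamma(1-\alpha')\,\lambda^{-\alpha'}$ and $\Gamma(1+\alpha')\Gamma(-\alpha')\,\lambda^{\alpha'-1}$—the data packaged into $\zeta(\alpha')$ and $\omega(1+\alpha')$. Inverting the $2\times 2$ matrix and separating the two leading rank-one tensor contributions then gives the advertised formula, with the denominators $1+\tau\omega(1+\alpha')\lambda^{\alpha'}$ and $1+\rho\zeta(\alpha')\lambda^{1-\alpha'}$ arising as Neumann resummations of the subleading diagonal corrections in $E_{-+}(\lambda)$. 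The main obstacle will be controlling the $O(1)$ remainder uniformly in $\mathcal{B}(s,-s)$ for $s>3$: this requires weighted resolvent estimates for the scalar factorized operator whose $(1+|x|^2)^{\pm s/2}$ weights can be absorbed only if the perturbation $h(x)+\alpha\log|x|=O(|x|^{-1})$ is sufficiently strong, which is precisely what Assumption~\ref{assump:magnetic_field} with $\rho>7/2$ provides; this estimate is also what permits the passage from a bounded operator in $\mathcal{B}(s,-s)$ to a genuine Birman-Schwinger-type operator on $L^2(\R^2)$ later on.
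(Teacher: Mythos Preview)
The paper does not prove this statement: Theorem~\ref{thm:resolvent_non_int} is quoted verbatim from Kova{\v{r}}{\'{\i}}k \cite{Kovarik2022} (as Theorem~5.6 there) and used as a black box, so there is no proof in the present paper against which to compare your proposal. Your sketch---factorizing $P_-(A_h)$ through a twisted $\bar\partial$-operator, splitting off the kernel to isolate the $-\lambda^{-1}P_0^-$ pole, and then running a Grushin/Feshbach reduction onto a finite-dimensional space carrying $\psi^-$ and $\varphi^-$---is a plausible high-level strategy and is in the spirit of how such low-energy expansions are obtained. Whether it matches the actual argument in \cite{Kovarik2022} (which proceeds via comparison with an explicit reference operator and a detailed analysis of the resulting perturbation series) would require consulting that paper; in any case, for the purposes of the present paper nothing beyond the statement is needed.
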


Definitions of the constants $\tau, \rho$ can also be found in \cite{Kovarik2022}, however their precise values are not relevant for our proof.

At integer magnetic flux, one constructs two virtual bound states $\varphi_1^-$ and $\varphi_2^-$ by linear combination of (generalized) Aharonov-Casher states $\{\psi_k^-\}_{k=0}^{n+1}$ such that $\varphi_1^- \in L^\infty(\R^2) \setminus L^p(\R^2)$ for any $2 \leq p < \infty$ and $\varphi_2^- \in L^p(\R^2) \setminus L^2(\R^2)$ for any $2 < p \leq \infty$. For the precise construction, we refer again to \cite[Sec. 6]{Kovarik2022}. Given these virtual bound states $\varphi_1^-$ and $\varphi_2^-$, let
\begin{align}
\Pi_{jk} = \varphi_k^- \langle \varphi_j^-, \, . \, \rangle , \qquad j,k=1,2. \label{eq:definition_Pi_op}
\end{align}
The resolvent expansion in the integer flux case then takes the following form.

\begin{theorem}[Thm. 6.5 \cite{Kovarik2022}]\label{thm:resolvent_int}
Let $\alpha \in \mathbb{Z}$, $\alpha>0$. Then there are constants $m\in\R$, $\kappa\in\C$ such that
\begin{align}
(P_-(A)-\lambda)^{-1}= -\lambda^{-1} P_0^- + \frac{\Pi_{22}}{\pi \lambda (\log \lambda +m- i\pi)} - K (\log \lambda -i \pi) + O(1) \nonumber
\end{align}
holds in $\mathcal{B}(s,-s)$, $s>3$, as $\lambda \rightarrow 0$, $\operatorname{Im}(\lambda)\geq 0$. Here,
\begin{align}
K = \frac{1}{4\pi} \left( \Pi_{11} + \overline{\kappa} \Pi_{12} + \kappa \Pi_{21} + |\kappa|^2 \Pi_{22} \right) + \frac{\pi |d_n^-|^2}{4 } \psi^- \langle \psi^- , \, . \, \rangle . \label{eq:definition_K_op}
\end{align}
If $\alpha = 1$, then the above expansion holds with $P_0^-=0$ and $\psi^- = 0$.
\end{theorem}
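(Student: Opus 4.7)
By Theorem \ref{thm:resolvent_plus}, the spin-up resolvent $(P_+(A)-\lambda)^{-1}$ is bounded in $\mathcal{B}(s,-s)$ as $\lambda \to 0^-$. Combined with the fact that Assumption \ref{assump:potentialV} makes $V^{1/2} \colon L^2 \to L^{2,s}$ bounded for $s$ up to $\sigma > 3$, this implies that $V^{1/2}(P_+(A)-\lambda)^{-1}V^{1/2}$ is uniformly bounded on $L^2(\R^2)$ near $\lambda = 0$. By the Birman--Schwinger principle, no negative eigenvalues of $H(\varepsilon)$ arise from the spin-up block for small $\varepsilon$, and it suffices to find $\lambda < 0$ for which $1/\varepsilon$ is an eigenvalue of $B(\lambda) := V^{1/2}(P_-(A)-\lambda)^{-1}V^{1/2}$. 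Substituting $\log\lambda = \log|\lambda|+i\pi$ into Theorem \ref{thm:resolvent_int} (valid for $\lambda < 0$) and sandwiching by $V^{1/2}$ yields the expansion
\[
B(\lambda) = -\lambda^{-1} T_0 + \frac{T_2}{\pi\lambda(\log|\lambda|+m)} - T_K \log|\lambda| + R(\lambda),
\]
with $T_0 := V^{1/2}P_0^- V^{1/2}$, $T_2 := V^{1/2}\varphi_2^- \langle V^{1/2}\varphi_2^-,\cdot\rangle$, $T_K := V^{1/2} K V^{1/2}$, and $\|R(\lambda)\|_{\mathcal{L}(L^2)} = O(1)$. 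A short argument based on the linear independence of the (generalized) Aharonov--Casher states on the support of $V$ yields $\operatorname{rank}(T_0) = n$, with positive eigenvalues $\mu_0, \dots, \mu_{n-1}$.

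\textbf{Iterated SLFG.} Let $P$ be the orthogonal projection onto $\operatorname{ran}(T_0)$ and $Q = I - P$. A first application of the SLFG formula (Lemma \ref{lem:slfg_formula}) to $B(\lambda) - \varepsilon^{-1}$ reduces the problem to: (i) eigenvalues of a perturbation of the $n$-dimensional block $PB(\lambda)P \approx -\lambda^{-1} T_0$, whose scalar inversions give $\lambda_k = -\mu_k\varepsilon(1 + O(|\log\varepsilon|^{-1}))$ for $k = 0,\dots,n-1$; and (ii) zeros of the Schur complement $S(\lambda)$ on $\operatorname{ran}(Q)$. On $\operatorname{ran}(Q)$ the operator $T_0$ is annihilated, and the most singular surviving term is the rank-one operator $Q T_2 Q / (\pi\lambda(\log|\lambda|+m))$, whose range is spanned by $\phi_2 = Q V^{1/2}\varphi_2^-$. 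A second SLFG step decomposes $\operatorname{ran}(Q) = \operatorname{span}\{\phi_2\} \oplus \operatorname{ran}(\tilde Q)$. The scalar equation on $\operatorname{span}\{\phi_2\}$ reads $\|\phi_2\|^2/(\pi|\lambda|\,|\log|\lambda|+m|)(1+o(1)) = \varepsilon^{-1}$; inverting in two stages (first estimating $|\log|\lambda||\asymp|\log\varepsilon|$, then refining) gives $\lambda_n = -\mu_n \varepsilon/|\log\varepsilon|(1 + O(\log|\log\varepsilon|/|\log\varepsilon|))$ with $\mu_n = \|\phi_2\|^2/\pi$. A third SLFG step on $\operatorname{ran}(\tilde Q)$ retains only $-\tilde Q T_K \tilde Q \log|\lambda|$ at leading order. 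Projecting onto the eigenvector of $\tilde Q T_K \tilde Q$ associated with the top eigenvalue $\mu_{n+1}$ (identified with \eqref{eq:mu_nplus1_thm_int}) produces the scalar equation $\mu_{n+1}|\log|\lambda|| = \varepsilon^{-1}(1+O(\varepsilon))$, which inverts to the exponential asymptotic for $\lambda_{n+1}$. The count $n+2$ is closed by checking that the residual Schur complement on the orthogonal complement of the chosen eigenvector remains bounded, precluding further eigenvalues.

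\textbf{Main obstacle.} The principal technical hurdle is propagating the SLFG remainders across three distinct singular scales $|\lambda|^{-1}$, $(|\lambda|\,|\log|\lambda||)^{-1}$, and $|\log|\lambda||$ while retaining the sharp error terms stated in the theorem. At each SLFG step one must verify that the off-diagonal coupling contributes only to the subleading order; this is delicate because the three singular operators have disjoint ranges only at leading order, with overlapping contributions hidden in the bounded remainder $R(\lambda)$ and in the unknown mixing coefficients $\kappa$, $m$ appearing in Theorem \ref{thm:resolvent_int}. A secondary care point is the inversion of the transcendental equation for $\lambda_n$, where a two-stage bootstrap (coarse estimate $|\log|\lambda||\asymp|\log\varepsilon|$, then refinement) is required to recover the precise $O(\log|\log\varepsilon|/|\log\varepsilon|)$ error. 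Finally, showing that each $\mu_k$ in the final formulas is strictly positive — so that all $n+2$ constructed eigenvalues are genuinely distinct and negative for small $\varepsilon$ — requires the non-degeneracy coming from $V>0$ on a set of positive measure together with the linear independence of the Aharonov--Casher states.
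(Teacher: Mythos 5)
Your proposal does not prove the statement in question. The statement you were asked to prove is Theorem~\ref{thm:resolvent_int} itself — the resolvent expansion
\[
(P_-(A)-\lambda)^{-1}= -\lambda^{-1} P_0^- + \frac{\Pi_{22}}{\pi \lambda (\log \lambda +m- i\pi)} - K (\log \lambda -i \pi) + O(1)
\]
in $\mathcal{B}(s,-s)$ as $\lambda\to 0$. This is a statement about the \emph{unperturbed} spin-down Pauli resolvent: it has nothing to do with $V$, $\varepsilon$, the Birman--Schwinger principle, or eigenvalue counting. In the paper it is cited verbatim as Theorem~6.5 of Kova\v{r}\'{\i}k~\cite{Kovarik2022}, where it is derived through a detailed low-energy analysis of the Pauli operator (construction of the virtual bound states $\varphi_1^-$, $\varphi_2^-$, determination of the threshold regularity, and an explicit inversion scheme for the resolvent near zero energy in weighted $L^2$ spaces). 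The present paper does not give a proof of it at all; it takes it as an input.

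What you have written instead is a (reasonable) outline of the proof of Theorem~\ref{thm:expansion_int} — the integer-flux weak-coupling eigenvalue asymptotics. Your very first move is to take Theorem~\ref{thm:resolvent_int} as given and sandwich it by $V^{1/2}$, and then apply iterated SLFG reductions within three $\varepsilon$-dependent reference windows. That is exactly the structure of Section~3.3 of this paper. But using the statement as an assumption in order to derive its downstream consequence is circular as a proof of the statement itself. To actually prove Theorem~\ref{thm:resolvent_int} you would have to reconstruct the microlocal/threshold analysis of~\cite[Sec.~6]{Kovarik2022}: define the gauge as in Section~3.1 there, classify the bounded zero modes at integer flux, identify the two distinguished virtual bound states $\varphi_1^-\in L^\infty\setminus L^p$ and $\varphi_2^-\in L^p\setminus L^2$, and then carry out the expansion of the resolvent in $\mathcal{B}(s,-s)$ extracting the $\lambda^{-1}$, $(\lambda\log\lambda)^{-1}$ and $\log\lambda$ singularities together with the operator $K$. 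None of that appears in your proposal.

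A secondary remark: even within the argument you actually carried out (for Theorem~\ref{thm:expansion_int}), note that the paper does not project onto ``the eigenvector of $\tilde Q T_K \tilde Q$ associated with the top eigenvalue $\mu_{n+1}$'' as you describe in the third SLFG step. The space $\operatorname{ran}(\tilde Q)\cap H_{n+2}$ is one-dimensional (spanned by $\tilde Q V^{1/2}\varphi_1^-$), so the final Schur complement is already a scalar multiple of a rank-one projector; $\mu_{n+1}$ in~\eqref{eq:mu_nplus1_thm_int} is just $\langle\phi_1,V^{1/2}KV^{1/2}\phi_1\rangle/\|\phi_1\|^2$, not a ``top eigenvalue'' selected among several. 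But this is a side issue compared with having addressed the wrong theorem.
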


We remark that there appears a typographical error in \cite{Kovarik2022} concerning the case $\alpha = 1$ where Theorem 6.5 reads ``$P_0^-  = \Pi_{22} = 0$'' instead of ``$P_0^-=\psi^- = 0$''. Again, the precise definitions of the constants $d_n^-$, $m$ and $\kappa$ can be found in \cite{Kovarik2022}.

\subsection{Schur complement}

Another important ingredient to our calculations is the Schur-Livsic-Feshbach-Grushin (SLFG) formula and in particular the Schur complement of a bounded, self-adjoint operator given in block form. We will apply the following version of the SLFG formula.

Let $H$ be a Hilbert space and suppose $P$ is an orthogonal projector. Let $H_1 = P H$ and $H_2 =(1 - P) H $. Then $H_1$, $H_2$ are closed subspaces of $H$ and $H=H_1 \oplus H_2$. Let $A$ be a bounded, self-adjoint operator on $H$. We write $A$ in block form
\begin{align}
A = \begin{pmatrix}
A_{11} & A_{12} \\
A_{21} & A_{22}
\end{pmatrix} \nonumber
\end{align}
where $A_{11}= A|_{H_1\to H_1}$, $A_{12}= A|_{H_2\to H_1}$, $A_{21}= A|_{H_1\to H_2}$ and $A_{22}= A|_{H_2\to H_2}$ (recall the notation introduced in the beginning of this section). 
\begin{lemma}[SLFG, \cite{Sjoestrand2007, Zhang2005, Jensen2001}] \label{lem:slfg_formula}
Assume $A_{22}$ is invertible on $H_2$. Then $A$ is invertible on $H$ if and only if its Schur complement $S=A_{11} - A_{12} A_{22}^{-1} A_{21}$ is invertible on $H_1$. In particular, 
\begin{align} \label{eq:slfg_kernel_dim}
\operatorname{dim} \operatorname{ker} A = \operatorname{dim} \operatorname{ker} S. 
\end{align}
\end{lemma}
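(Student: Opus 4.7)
The plan is to exploit the standard block $LDU$-type factorization of a $2\times 2$ block operator with invertible $(2,2)$-entry. First I would verify by direct block multiplication in the decomposition $H = H_1 \oplus H_2$ that
\begin{align}
A = \begin{pmatrix} I & A_{12} A_{22}^{-1} \\ 0 & I \end{pmatrix} \begin{pmatrix} S & 0 \\ 0 & A_{22} \end{pmatrix} \begin{pmatrix} I & 0 \\ A_{22}^{-1} A_{21} & I \end{pmatrix}, \nonumber
\end{align}
where $S = A_{11} - A_{12} A_{22}^{-1} A_{21}$ is the Schur complement. The hypothesis that $A_{22}$ is invertible on $H_2$ guarantees that $A_{22}^{-1} A_{21} \in \mathcal{L}(H_1, H_2)$ and $A_{12} A_{22}^{-1} \in \mathcal{L}(H_2, H_1)$ are well-defined bounded operators, so all three factors above are genuinely in $\mathcal{L}(H)$. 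The outer triangular factors are invertible, with inverses obtained simply by flipping the signs of the off-diagonal blocks. Consequently $A$ is invertible on $H$ if and only if the middle block-diagonal factor is, and since $A_{22}$ is invertible by assumption, this reduces to invertibility of $S$ on $H_1$.

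For the kernel dimension equality \eqref{eq:slfg_kernel_dim}, rather than passing through the factorization I would argue directly. Splitting $x = x_1 + x_2$ with $x_j \in H_j$, the equation $Ax = 0$ becomes the pair $A_{11} x_1 + A_{12} x_2 = 0$ and $A_{21} x_1 + A_{22} x_2 = 0$. The second forces $x_2 = -A_{22}^{-1} A_{21} x_1$, and inserting into the first yields $S x_1 = 0$. Conversely, every $x_1 \in \operatorname{ker} S$ lifts to the element $x_1 - A_{22}^{-1} A_{21} x_1 \in \operatorname{ker} A$. The map $x_1 \mapsto x_1 - A_{22}^{-1} A_{21} x_1$ is thus a bounded linear bijection from $\operatorname{ker} S$ onto $\operatorname{ker} A$, with inverse given by the restriction of the projection $P$ onto $H_1$. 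This yields $\operatorname{dim} \operatorname{ker} A = \operatorname{dim} \operatorname{ker} S$.

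I do not anticipate any genuine obstacle here: the argument is purely formal once the factorization is in place. The only point requiring mild care is keeping track of boundedness of all composed operators, which is handled entirely by the invertibility hypothesis on $A_{22}$ together with the fact that inverses of bounded operators between Hilbert spaces are bounded.
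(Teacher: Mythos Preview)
Your argument for the kernel dimension equality is essentially identical to the paper's: split $x$ into components, use invertibility of $A_{22}$ to eliminate $x_2$, and observe that $x_1 \mapsto x_1 - A_{22}^{-1}A_{21}x_1$ is a linear bijection between $\operatorname{ker} S$ and $\operatorname{ker} A$. The only difference is that the paper proves \emph{only} this part and defers the invertibility equivalence to the cited references, whereas you supply it via the block $LDU$ factorization; that addition is correct and standard.
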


\begin{proof}
We provide a short proof of \eqref{eq:slfg_kernel_dim}. Let  
\begin{align}
\psi = \begin{pmatrix}
\psi_1 \\
\psi_2
\end{pmatrix} \in H_1 \oplus H_2. \nonumber
\end{align}
Suppose $\psi \in \operatorname{ker} A$. Then, $A \psi = 0$ is equivalent to
\begin{align}
A_{11} \psi_1 + A_{12} \psi_2 &= 0, \nonumber \\
A_{21} \psi_1 + A_{22} \psi_2 &= 0. \nonumber
\end{align} 
Since $A_{22}^{-1}$ is invertible, this implies $\psi_2 = - A_{22}^{-1} A_{21} \psi_1$ and thus $S \psi_1 = A_{11} \psi_1 - A_{12} A_{22}^{-1} A_{21} \psi_1 = 0$. 

Conversely, if $\phi \in \operatorname{ker} S$, then 
\begin{align}
\psi = \begin{pmatrix}
\phi \\
-A_{22}^{-1} A_{21} \phi 
\end{pmatrix} \in \operatorname{ker} A. \nonumber
\end{align} 

\end{proof}

If $A^{-1}$ is bounded, then
\begin{align}
A^{-1} = \begin{pmatrix}
S^{-1} &  -S^{-1} A_{12} A_{22}^{-1} \\
- A_{22}^{-1} A_{21}S^{-1} & A_{22}^{-1 } - A_{22}^{-1} A_{21} S^{-1} A_{12} A_{22}^{-1}
\end{pmatrix}. \label{eq:slfg_formula_inversese}
\end{align}

While we will not use equation \eqref{eq:slfg_formula_inversese}, we will make extensive use of the invertibility condition that $A$ is invertible if and only if its Schur complement $S$ is invertible (having shown that $A_{22}$ is invertible) and that the dimensions of the kernels of $A$ and $S$ coincide.

\section{Proofs of the main results} \label{sec:weak_coupling_proof}

We will now give the proofs of Theorems \ref{thm:expansion_zero}, \ref{thm:expansion_nonint} and \ref{thm:expansion_int}. First, to shorten notation, we define $v:=V^\frac{1}{2}$, which is well-defined, since $V\geq 0$.

We begin by applying the Birman-Schwinger principle, see the recent monograph \cite{Frank2022} and references therein. It reveals that $\lambda < 0$ is an eigenvalue of $H(\varepsilon)$ if and only if $1$ is an eigenvalue of the Birman-Schwinger operator $\varepsilon v(H(\varepsilon)-\lambda)^{-1}v$. Since 
\begin{align}
\varepsilon v(H(\varepsilon)-\lambda)^{-1}v = \begin{pmatrix}
\varepsilon v(P_+(A)-\lambda)^{-1}v & 0 \\
0 & \varepsilon v(P_-(A)-\lambda)^{-1}v
\end{pmatrix}, \nonumber
\end{align}
this is the case if and only if $1$ is an eigenvalue of $\varepsilon v(P_+(A)-\lambda)^{-1}v$ or $1$ is an eigenvalue of $\varepsilon v(P_-(A)-\lambda)^{-1}v$. Here, the linear operators
\begin{align}
 L^2(\R^2) \ni \psi &\mapsto v(P_\pm(A)-\lambda)^{-1} v \psi \in L^{2}(\R^2) , \qquad \lambda <0,  \nonumber
\end{align}
are bounded, since $V$ is bounded by Assumption \ref{assump:potentialV}. They are in fact compact. This follows from compactness of the Birman-Schwinger operator $v(-\Delta-\lambda)^{-1}v$ to the corresponding classical Schrödinger operator (which is actually Hilbert-Schmidt, since $V$ satisfies the conditions of Proposition 3.2 of \cite{Simon1976} by Assumption \ref{assump:potentialV}). Together with the diamagnetic inequality, see \cite{Hundertmark2004}, and a theorem by Pitt \cite{Pitt1979} one concludes that the Birman-Schwinger operator $v((-i\nabla +A)^2-\lambda)^{-1}v$ to the corresponding magnetic Schrödinger operator is also compact. The Birman-Schwinger operators $v(P_\pm(A)-\lambda)^{-1}v=v((-i\nabla +A)^2\pm B-\lambda)^{-1}v$ for the spin-components of the Pauli operator are then seen to be compact as well when one applies a resolvent identity (note that $B$ is bounded due to Assumption \ref{assump:magnetic_field}).

Now, recall that the asymptotic expansions of the Theorems \ref{thm:resolvent_int_zero} to  \ref{thm:resolvent_int} hold in $\mathcal{B}(s,-s)$, $s>3$. Under Assumption \ref{assump:potentialV} on the potential $V$, the linear operators
\begin{align} \label{eq:operators_with_v_multiplied}
L^2(\R^2) \ni \psi &\mapsto v \psi \in L^{2,\sigma}(\R^2), \\
L^{2,-\sigma}(\R^2) \ni \psi &\mapsto v \psi \in L^{2}(\R^2)
\end{align}
are bounded. If $v(P_\pm(A)-\lambda)^{-1} v$ is understood as a map 
\begin{align}
L^2(\R^2) \overset{v}{ \longrightarrow} L^{2,\sigma} (\R^2)  \overset{(P_\pm(A)-\lambda)^{-1}}{ \longrightarrow} L^{2,-\sigma} (\R^2)  \overset{v}{ \longrightarrow} L^{2} (\R^2),  \nonumber
\end{align} 
one sees that it is valid to take the asymptotic expansions of Theorem \ref{thm:resolvent_int_zero} to  \ref{thm:resolvent_int} in $\mathcal{B}(\sigma,-\sigma)$ and simply multiply them by $v$ from the left and right to gain asymptotic expansions of $v(P_\pm(A)-\lambda)^{-1} v \in \mathcal{L}(L^2(\R^2))$ as $\lambda\to 0$.

\subsection{Zero flux - Proof of Theorem \ref{thm:expansion_zero}}

Let us start with the case $\alpha = 0$. We treat $P_+(A)$ and $P_-(A)$ simultaneously and divide the proof into several steps. \\

\textbf{Step 1: Preliminaries}\\

 Note that if $\alpha =0$, then $\psi_0^\pm = e^{\mp h+i\chi} = O(1)$ as $|x| \to \infty$. Hence, under Assumption \ref{assump:potentialV} on the potential $V$, we have $v\psi_0^\pm \in L^2(\R^2)$. Consider $H_\pm = \operatorname{span}\{ \varphi^\pm \}\subset L^2(\R^2)$ where $\varphi^\pm = v\psi_0^\pm / \Vert v\psi_0^\pm \Vert_{L^2(\R^2)}$. Note that $v(P_{\pm}(A)-\lambda)^{-1}v \in \mathcal{L}(L^2(\R^2))$ is continuous in operator norm  with respect to $\lambda < 0$ and 
\begin{align} \label{eq:vP-lv_l-inf}
v(P_{\pm}(A)-\lambda)^{-1}v = o(1)
\end{align}
in $\mathcal{L}(L^2(\R^2))$ as $\lambda	\to -\infty$. Furthermore, by Theorem \ref{thm:resolvent_int_zero},
\begin{align} \label{eq:vP-lv_l-0}
v(P_{\pm}(A)-\lambda)^{-1}v= - \frac{\log |\lambda|}{4 \pi}   v\psi_0^\pm \langle v\psi_0^\pm, \, . \,  \rangle   +O(1)
\end{align}
in $\mathcal{L}(L^2(\R^2))$ as $\lambda \nearrow 0$. This means that
\begin{align}
&v(P_{\pm}(A)-\lambda)^{-1}v|_{H_\pm \to H_\pm^\perp}, \label{eq:vpv_12} \\
&v(P_{\pm}(A)-\lambda)^{-1}v|_{H_\pm^\perp \to H_\pm}, \label{eq:vpv_21} \\
&v(P_{\pm}(A)-\lambda)^{-1}v|_{H_\pm^\perp \to H_\pm^\perp} \label{eq:vpv_22}
\end{align}
are uniformly bounded over $\lambda < 0$. \\

\textbf{Step 2: Bounding the number of negative eigenvalues of $H(\varepsilon)$}\\

We can already show with \eqref{eq:vP-lv_l-0} that $H(\varepsilon)$ has at most two eigenvalues for small enough $\varepsilon$. For that, we argue that both spin components $P_\pm(A) - \varepsilon V$ have at most one negative eigenvalue. Let $\lambda_k^\pm(\varepsilon)$, $k=0,1,...$, denote the negative eigenvalues of $P_\pm(A) - \varepsilon V$ and $\mu_k^\pm(\lambda)$, $k=0,1,...$, the non-negative eigenvalues of $v(P_{\pm}(A)-\lambda)^{-1}v$. Let also $\tilde{\mu}_k^\pm(\lambda)$, $k=0,1,...$, denote the eigenvalues of the leading order operator 
\begin{align} 
S(\lambda) =- \frac{\log |\lambda|}{4 \pi}   v\psi_0^\pm \langle v\psi_0^\pm, \, . \,  \rangle   \nonumber
\end{align}
on the right hand side of \eqref{eq:vP-lv_l-0}. Note that all eigenvalues of $S(\lambda)$ except one are zero since $S(\lambda)$ is a rank one operator. We can assume that $|\lambda|$ is small enough such that $\tilde{\mu}_k^\pm(\lambda)\geq 0$ for any $k$. We also assume that the eigenvalues $\mu_k^\pm(\lambda)$, $\tilde{\mu}_k^\pm(\lambda)$ are indexed in decreasing order. Then $\tilde{\mu}_k^\pm(\lambda)=0$ for $k\neq 0$ and $\tilde{\mu}_0^\pm(\lambda)$ is the only eigenvalue that is possibly non-zero. The asymptotic equation \eqref{eq:vP-lv_l-0} implies that there exist $\Lambda<0$ and $C>0$ such that for any $\Lambda < \lambda < 0$
\begin{align}
|\mu_k^\pm(\lambda) - \tilde{\mu}_k^\pm(\lambda) | \leq C. \nonumber
\end{align}
and therefore  $|\mu_k^\pm(\lambda) | \leq C$ for any $\Lambda < \lambda < 0$ and $k\neq 0$. It follows that $
|\varepsilon \mu_k^\pm(\lambda) | \leq 1 $ for $k\neq 0$ and $|\lambda|, \varepsilon $ small enough. By the Birman-Schwinger principle, we conclude
\begin{align}
\#\{ \lambda_k^\pm(\varepsilon)  \} &= \lim_{\lambda \nearrow 0} \# \{ \lambda_k^\pm(\varepsilon) \, : \, \lambda_k^\pm(\varepsilon)< \lambda  \} = \lim_{\lambda \nearrow 0} \# \{ \varepsilon\mu_k^\pm(\lambda) \, : \, \varepsilon \mu_k^\pm(\lambda)>1 \} \nonumber \\
&\leq \lim_{\lambda \nearrow 0} \# \{ \varepsilon\mu_0^\pm(\lambda) \} = 1. \nonumber
\end{align}
for $\varepsilon $ small enough. This shows that $P_\pm(A) - \varepsilon V$ each have at most one negative eigenvalue. \\

Now we show existence of the negative eigenvalues of $H(\varepsilon)$ and derive their asymptotic expansions. For that, we consider the operator 
\begin{align}
K_\varepsilon^\pm(\lambda) := 1 - \varepsilon v(P_\pm(A)-\lambda)^{-1}v. \label{eq:K_eps_definition}
\end{align}
Because the Birman-Schwinger operator $\varepsilon v(P_\pm(A)-\lambda)^{-1}v$ is compact and self-adjoint, the condition of $1$ being an eigenvalue of $\varepsilon v(P_\pm(A)-\lambda)^{-1}v$ is equivalent to the operator $K_\varepsilon^\pm(\lambda)$ being not invertible on $L^2(\R^2)$. The invertibility of $K_\varepsilon^\pm(\lambda)$ will therefore be what we examine in the following. \\

\textbf{Step 3: Reduction to a rank-one operator}\\

We decompose $K_\varepsilon^\pm(\lambda)$ into
\begin{align}
K_{\varepsilon}^\pm(\lambda)= \begin{pmatrix}
K_{\varepsilon,11}^\pm(\lambda) & K_{\varepsilon,12}^\pm(\lambda)\\[5pt]
K_{\varepsilon,21}^\pm(\lambda) & K_{\varepsilon,22}^\pm(\lambda)
\end{pmatrix} = \begin{pmatrix}
K_\varepsilon^\pm(\lambda)|_{H_\pm \to H_\pm} & K_\varepsilon^\pm(\lambda)|_{H_\pm^\perp \to H_\pm} \\
K_\varepsilon^\pm(\lambda)|_{H_\pm \to H_\pm^\perp} & K_\varepsilon^\pm(\lambda)|_{H_\pm^\perp \to H_\pm^\perp}
\end{pmatrix}. \nonumber
\end{align}

Using the uniform boundedness of \eqref{eq:vpv_12}, \eqref{eq:vpv_21} and \eqref{eq:vpv_22} and $1|_{H_\pm^\perp \to H_\pm} = 1|_{H_\pm \to H_\pm^\perp} = 0$, we get the estimates
\begin{align} 
\Vert K_{\varepsilon,12}^\pm(\lambda) \Vert &= \Vert (  - \varepsilon v(P_{\pm}(A)-\lambda)^{-1}v )|_{H_\pm^\perp \to H_\pm} \Vert \lesssim \varepsilon, \label{eq:K_eps12}\\
\Vert K_{\varepsilon,21}^\pm(\lambda) \Vert&=\Vert (  - \varepsilon v(P_{\pm}(A)-\lambda)^{-1}v )|_{H_\pm \to H_\pm^\perp}\Vert \lesssim \varepsilon , \label{eq:K_eps21}\\
\Vert K_{\varepsilon,22}^\pm(\lambda) -1|_{H_\pm^\perp \to H_\pm^\perp} \Vert&= \Vert ( - \varepsilon v(P_{\pm}(A)-\lambda)^{-1}v )|_{H_\pm^\perp \to H_\pm^\perp} \Vert \lesssim \varepsilon  \label{eq:K_eps22}
\end{align}
for any $\lambda <0$. Thus, if $\varepsilon$ is sufficiently small, $K_{\varepsilon,22}^\pm(\lambda)$ becomes invertible on $H_\pm^\perp$ for any $\lambda <0$ and then
\begin{align}\label{eq:K_eps22_inv}
\Vert (K_{\varepsilon,22}^\pm(\lambda) )^{-1} - 1 |_{H_\pm^\perp \to H_\pm^\perp} \Vert \lesssim \varepsilon .
\end{align}
The SLFG Lemma (Lemma \ref{lem:slfg_formula}) now implies that $K_{\varepsilon}^\pm(\lambda)$ is invertible if and only if its Schur complement
\begin{align}\label{eq:S_eps}
S_{\varepsilon}^\pm(\lambda) := K_{\varepsilon,11}^\pm(\lambda) - K_{\varepsilon,12}^\pm(\lambda) (K_{\varepsilon,22}^\pm(\lambda))^{-1}K_{\varepsilon,21}^\pm(\lambda)
\end{align}
is invertible in $H_\pm$. Here, 
\begin{align} \label{eq:K_eps122221_comb}
\Vert K_{\varepsilon,12}^\pm(\lambda) (K_{\varepsilon,22}^\pm(\lambda))^{-1}K_{\varepsilon,21}^\pm(\lambda) \Vert \lesssim \varepsilon^2
\end{align}
by \eqref{eq:K_eps12}, \eqref{eq:K_eps21} and \eqref{eq:K_eps22_inv}. Observe that $S_{\varepsilon}^\pm(\lambda): H^\pm \to H^\pm$ is a self-adjoint rank one operator, so 
\begin{align}
S_{\varepsilon}^\pm(\lambda) = s_\varepsilon^\pm(\lambda) \varphi^\pm \langle \varphi^\pm, \, . \, \rangle \nonumber
\end{align}
where 
\begin{align} \label{eq:s_eps_pm_small}
s_\varepsilon^\pm(\lambda) = \langle \varphi^\pm,S_{\varepsilon}^\pm(\lambda) \varphi^\pm \rangle  \in \R.
\end{align}
Hence, the Schur complement $S_{\varepsilon}^\pm(\lambda) $ is not invertible in $H^\pm$ if and only if 
\begin{align} \label{eq:flux0_scalar_s_eps}
s_\varepsilon^\pm(\lambda) = 0.
\end{align}
The negative eigenvalues of $H(\varepsilon)$ are given by the solutions $\lambda$ to the scalar equation \eqref{eq:flux0_scalar_s_eps}.\\

\textbf{Step 4: Showing existence of negative eigenvalues of $H(\varepsilon)$}\\

Let us argue that $s_\varepsilon^\pm(\lambda) = 0$ has at least one solution $\lambda<0$ when $\varepsilon$ is small enough (one each for the plus and the minus case). Let $\lambda^*<0$ be given. Because of \eqref{eq:vP-lv_l-inf}, $v(P_{\pm}(A)-\lambda)^{-1}v$ is uniformly bounded in $\mathcal{L}(L^2(\R^2))$ for $\lambda < \lambda^*<0$. With this fact and equations \eqref{eq:S_eps}, \eqref{eq:K_eps122221_comb} and \eqref{eq:s_eps_pm_small}, we conclude that
\begin{align}
s_\varepsilon^\pm(\lambda) \geq 1 - c_1 \varepsilon -c_2 \varepsilon^2, \qquad \lambda < \lambda^*, \nonumber
\end{align}
for some constants $c_1,c_2>0$ and in particular
\begin{align}
s_\varepsilon^\pm(\lambda) > 0, \qquad \lambda < \lambda^*, \nonumber
\end{align}
if $\varepsilon$ is small enough. On the other hand, because of \eqref{eq:vP-lv_l-0}, \eqref{eq:S_eps} and \eqref{eq:K_eps122221_comb}, there exists some $\Lambda^* <0$ such that
\begin{align} \label{eq:s_eps_pm_lambda_small}
s_\varepsilon^\pm(\lambda) &= 1 + \frac{\varepsilon}{4\pi} \Vert v\psi_0^\pm \Vert_{L^2(\R^2)}^2 \log|\lambda| + r_\varepsilon(\lambda), \qquad \Lambda^* < \lambda < 0,
\end{align}
with a remainder term $r_\varepsilon(\lambda) $ that satisfies $|r_\varepsilon(\lambda)| \lesssim \varepsilon $ for $ \Lambda^* < \lambda < 0$. Therefore, $s_\varepsilon^\pm(\lambda)  \to - \infty$ as $\lambda \nearrow 0$. Since $s_\varepsilon^\pm(\lambda)$ is continuous in $\lambda<0$, the intermediate value theorem implies that there exists at least one solution $\lambda_0^\pm(\varepsilon)\in [\lambda^*,0)$ of $s_\varepsilon^\pm(\lambda) = 0$. 

Since we have shown in Step 2 that $P_\pm(A)-\varepsilon V$ each have at most one eigenvalue for small enough $\varepsilon$, we have proven that $P_\pm(A)-\varepsilon V$ each have precisely one negative eigenvalue for small enough $\varepsilon$ and hence $H(\varepsilon)$ has precisely two negative eigenvalues for small enough $\varepsilon$. \\

\textbf{Step 5: Extracting eigenvalue asymptotics}\\

Note that $\lambda_0^\pm(\varepsilon) \to 0$ as $\varepsilon \searrow 0$, since $\lambda^*$ was arbitrary. It follows that $\Lambda^*<\lambda_0^\pm(\varepsilon)<0$ for any $\varepsilon$ small enough and according to \eqref{eq:s_eps_pm_lambda_small}, the solution $\lambda_0^\pm(\varepsilon)$ must satisfy
\begin{align}
\log|\lambda_0^\pm(\varepsilon)| = - \frac{4\pi}{\Vert v\psi_0^\pm \Vert_{L^2(\R^2)}^2} \varepsilon^{-1} (1+r_\varepsilon(\lambda_0^\pm(\varepsilon))). \nonumber
\end{align}
Because $|r_\varepsilon(\lambda)| \lesssim \varepsilon $ for $\Lambda^*<\lambda_0^\pm(\varepsilon)<0$, we finally obtain 
\begin{align}
\lambda_0^\pm(\varepsilon) = -\exp\left( -  \mu_{\pm} ^{-1} \, \varepsilon^{-1} (1+O(\varepsilon)) \right), \qquad \varepsilon \searrow 0, \nonumber
\end{align}
with
\begin{align}
\mu_{\pm} = \frac{1}{4\pi}  \Vert v\psi_0^\pm \Vert_{L^2(\R^2)}^2 = \frac{1}{4\pi}\int_{\R^2} V|\psi_0^\pm |^2 \dd x.\nonumber
\end{align}
\begin{flushright}
$\square$
\end{flushright}

\subsection{Non-integer flux - Proof of Theorem \ref{thm:expansion_nonint}}

For non-zero flux, the calculations become more complicated. We first recall that the asymptotic expansion of $P_+(A)$ has no singularities at $\lambda = 0$, while that of $P_-(A)$ has, so we need to treat the spin-up component and the spin-down component separately.

Let us first consider the component $P_+(A)$ of the Pauli operator. We consider $K_\varepsilon^+(\lambda)$ as defined by \eqref{eq:K_eps_definition}. Theorem \ref{thm:resolvent_plus} gives for any $\alpha>0$
\begin{align}
v (P_+(A)-\lambda)^{-1} v = O(1) \nonumber
\end{align}
in $\mathcal{L}(L^2(\R^2))$ as $\lambda \nearrow 0$. Since also $v (P_+(A)-\lambda)^{-1} v = o(1)$ as $\lambda\to -\infty$ and $v (P_+(A)-\lambda)^{-1} v$ is continuous with respect to $\lambda<0$, we find that $v (P_+(A)-\lambda)^{-1} v$ is uniformly bounded for $\lambda <0$. This implies that $K_\varepsilon^+(\lambda)=1 + O(\varepsilon)$ in $\mathcal{L}(L^2(\R^2))$ as $\varepsilon \searrow 0$ uniform in $\lambda <0$ and hence $K_\varepsilon^+(\lambda)$ is invertible for any $\lambda <0$ as soon as $\varepsilon$ is small enough. This means that $P_+(A) - \varepsilon V$ exhibits no eigenvalue $\lambda<0$ if $\varepsilon$ is small enough. 

For small enough $\varepsilon$, any negative eigenvalues of $H(\varepsilon)$ must therefore be negative eigenvalues of $P_-(A) - \varepsilon V$. Let us discuss $P_-(A) - \varepsilon V$ and its associated operator $K_\varepsilon^-(\lambda)$ in the following.\\

\textbf{Step 1: Preliminaries }\\

Let $\alpha >0$, $\alpha \in \R \setminus \mathbb{Z}$. Let $n=\lfloor \alpha \rfloor$ and $\alpha' = \alpha - n$. Finally, recall the (generalized) Aharonov-Casher states $\psi_k^-(x) = (x_1-ix_2)^{k-1} e^{h(x)+i\chi(x)}$ for $k=0,...,n$. The states $\{ \psi_k^- \}_{k=0}^{n-1}$ span the zero eigenspace of $P_-(A)$, i.e.\ $\operatorname{ran } P_0^-$, while $\psi_{n}^-$ is a virtual bound state at zero. Recall that the (generalized) Aharonov-Casher states are linearly independent under Assumption \ref{assump:magnetic_field} on the magnetic field. Under Assumption \ref{assump:potentialV} on the potential $V$, we have $v \psi_k^- \in L^2(\R^2)$, $k=0,...,n$, since all $\{\psi_k^-\}_{k=0}^n$ are bounded functions. Furthermore, the functions $\{ v\psi_k^-\}_{k=0}^n$ are linearly independent, since $V>0$ on some set of non-zero measure.% Lastly, we remark that one also finds
%\begin{align}
%\langle v\psi_j^- , v \psi_k^- \rangle = \int_{\R^2} V \overline{ \psi_j^-} \psi_k^- \dd x > 0  \label{eq:V_psij_psik_integral}
%\end{align}
%for all $j,k=0,...,n$. This property will become important later.

Let $H_{n} = \operatorname{span}\{ v\psi_k^- \}_{k=0}^{n-1} $ and $H_{n+1} = \operatorname{span}\{ v\psi_k^- \}_{k=0}^{n} $. Because of linear independence of the states $v\psi_k^-$, the spaces $H_{n}$ and $H_{n+1}$ are $n$- resp.~$(n+1)$-dimensional subspaces of $L^2(\R^2)$. To shorten notation, for $\lambda<0$, $|\lambda|$ small enough, let also
\begin{align}
f_0(\lambda)&= -\lambda^{-1}, \nonumber \\
f_1(\lambda)&=\frac{\omega(1+\alpha')\lambda^{\alpha'-1}}{1+\tau \omega(1+\alpha') \lambda^{\alpha'}}, \nonumber \\
f_2(\lambda)&= - \frac{\zeta(\alpha') \lambda^{-\alpha'}}{1+\rho \zeta(\alpha') \lambda^{1-\alpha'}}  \nonumber 
\end{align}
with $\zeta(t)$ and $\omega(t)$ given by \eqref{eq:zeta_omega_definition}.
These are the prefactors appearing in front the of the linear operators in the resolvent expansion of Theorem \ref{thm:resolvent_non_int}. Observe that for $\lambda<0$ the prefactors $f_0(\lambda)$, $f_1(\lambda)$ and $f_2(\lambda)$ can be rephrased as
\begin{align}
f_0(\lambda) &= |\lambda|^{-1}, \nonumber \\
f_1(\lambda) &=  \frac{|d_n^-|^2e^{-i\pi(1-\alpha')}|\lambda|^{-(1-\alpha')}}{\zeta(1+\alpha')+\tau |d_n^-|^2 e^{i\pi \alpha'}|\lambda|^{\alpha'}} =  -\frac{|d_n^-|^2|\lambda|^{-(1-\alpha')}}{\frac{4^{\alpha'} \Gamma(1+\alpha')}{\pi \Gamma(-\alpha')}+\tau |d_n^-|^2 |\lambda|^{\alpha'}}, \nonumber \\
f_2(\lambda)&= - \frac{\zeta(\alpha') e^{-i\pi \alpha'}|\lambda|^{-\alpha'}}{1+\rho \zeta(\alpha') e^{i\pi (1-\alpha')}|\lambda|^{1-\alpha'}}=  \frac{ \frac{4^{\alpha'-1} \Gamma(\alpha') }{\pi \Gamma(1-\alpha')} |\lambda|^{-\alpha'}}{1+\rho \frac{4^{\alpha'-1} \Gamma(\alpha') }{\pi \Gamma(1-\alpha')} |\lambda|^{1-\alpha'}}, \nonumber
\end{align}
and one sees that all three prefactors are real-valued.

By Theorem \ref{thm:resolvent_non_int} and by boundedness of the operators \eqref{eq:operators_with_v_multiplied}, we find
\begin{align} \label{eq:vPv_non_int_expansion}
v(P_-(A)-\lambda)^{-1}v&= f_0(\lambda) vP_0^-v +f_1(\lambda) v\psi^- \langle v\psi^-, . \rangle + f_2(\lambda) v\varphi^- \langle v\varphi^-, . \rangle  + O(1)
\end{align}
in $\mathcal{L}(L^2(\R^2))$ as $\lambda \nearrow 0$, where $v\psi^- \in H_n$ and $v\varphi^- \in H_{n+1} \setminus H_{n}$. Since $f_0(\lambda)$, $f_1(\lambda)$, $f_2(\lambda)$  are real for $\lambda<0$, all terms on the right-hand side are self-adjoint. Also, note that $\operatorname{ran } vP_0^-v = H_n$ and therefore since $vP_0^-v$ is self-adjoint, $\operatorname{ker }vP_0^-v = H_n^\perp$. In addition, $H_n^\perp \subset \operatorname{ker }v\psi^- \langle v\psi^-, . \rangle $, since $v\psi^- \in H_n$. 

As in the case of zero flux, we note that $v(P_{\pm}(A)-\lambda)^{-1}v$ is continuous in operator norm  with respect to $\lambda < 0$ and 
\begin{align} 
v(P_{-}(A)-\lambda)^{-1}v = o(1) \nonumber
\end{align}
in $\mathcal{L}(L^2(\R^2))$ as $\lambda	\to -\infty$. Hence, we conclude that
\begin{align}
&v(P_{-}(A)-\lambda)^{-1}v|_{H_{n+1} \to H_{n+1}^\perp}, \nonumber\\
&v(P_{-}(A)-\lambda)^{-1}v|_{H_{n+1}^\perp \to H_{n+1}}, \nonumber \\
&v(P_{-}(A)-\lambda)^{-1}v|_{H_{n+1}^\perp \to H_{n+1}^\perp} \nonumber
\end{align}
are uniformly bounded over $\lambda < 0$. \\

\textbf{Step 2: Bounding the number of negative eigenvalues of $H(\varepsilon)$}\\

Similarly as in the case $\alpha=0$, we can already show that $H(\varepsilon)$ has no more than $n+1$ negative eigenvalues. We have already seen that $P_+(A)-\varepsilon V$ contributes no negative eigenvalues for small enough $\varepsilon$. We now argue that $P_-(A)-\varepsilon V$ has at most $n+1$ negative eigenvalues for small enough $\varepsilon$. Hence, let as before $\lambda_k^-(\varepsilon)$, $k=0,1,...$, denote the negative eigenvalues of $P_-(A) - \varepsilon V$ and $\mu_k^-(\lambda)$, $k=0,1,...$, the non-negative eigenvalues of $v(P_{-}(A)-\lambda)^{-1}v$. Let $\tilde{\mu}_k^-(\lambda)$, $k=0,1,...$, now denote the eigenvalues of the operator 
\begin{align} \label{eq:singular_part_nonint}
S(\lambda)=f_0(\lambda) vP_0^-v +f_1(\lambda) v\psi^- \langle v\psi^-, . \rangle + f_2(\lambda) v\varphi^- \langle v\varphi^-, . \rangle.
\end{align}
It collects all terms on the right hand side of \eqref{eq:vPv_non_int_expansion} that are singular as $\lambda \to 0$. Since $S(\lambda)$ is at most rank $n+1$, it has at most $n+1$ non-zero eigenvalues. We can again assume that $|\lambda|$ is small enough such that $\tilde{\mu}_k^-(\lambda)\geq 0$ for any $k$. This is because  $f_0(\lambda)$, $f_2(\lambda)$ are non-negative and $f_1(\lambda) v\psi^- \langle v\psi^-, . \rangle$ can be seen as a weak perturbation to $f_0(\lambda) vP_0^-v$ since $v\psi^-\in H_n$ and $f_1(\lambda)$ has a weaker singularity as $\lambda \to 0$ than $f_0(\lambda)$. If we assume that the eigenvalues $\mu_k^-(\lambda)$, $\tilde{\mu}_k^-(\lambda)$ are indexed in decreasing order, then $\tilde{\mu}_k^-(\lambda)=0$ for $k > n$. Equation \eqref{eq:vPv_non_int_expansion} implies that there exist $\Lambda>0$ and $C>0$ such that for any $\Lambda < \lambda < 0$
\begin{align}
|\mu_k^-(\lambda) - \tilde{\mu}_k^-(\lambda) | \leq C, \nonumber
\end{align}
and therefore  $|\mu_k^-(\lambda) | \leq C$ for any $\Lambda < \lambda < 0$ and $k> n$. It follows that $
|\varepsilon \mu_k^-(\lambda) | \leq 1 $ for $k> n$ and $|\lambda|, \varepsilon $ small enough. We finally conclude for $\varepsilon $ small enough
\begin{align}
\#\{ \lambda_k^\pm(\varepsilon)  \} &= \lim_{\lambda \nearrow 0} \# \{ \lambda_k(\varepsilon) \, : \, \lambda_k(\varepsilon)< \lambda  \} = \lim_{\lambda \nearrow 0} \# \{ \varepsilon\mu_k^\pm(\lambda) \, : \, \varepsilon \mu_k^\pm(\lambda)>1 \} \nonumber \\
& \leq  \lim_{\lambda \nearrow 0}  \# \{ \varepsilon\mu_0^\pm(\lambda),...,\varepsilon\mu_n^\pm(\lambda) \} = n+1 \nonumber
\end{align}
by the Birman-Schwinger principle. This shows that $P_-(A) - \varepsilon V$ and hence $H(\varepsilon)$ has at most $n+1$ negative eigenvalues. \\

\textbf{Step 3: Reduction to a finite rank operator}\\

We now consider the operator 
\begin{align}
K_\varepsilon^-(\lambda) = 1 - \varepsilon v(P_-(A)-\lambda)^{-1}v. \nonumber
\end{align}
As argued in the case $\alpha = 0$, $\lambda$ is an eigenvalue of $H(\varepsilon)$ if and only if the operator $K_\varepsilon^-(\lambda)$ is not invertible. 
We decompose $K_\varepsilon^-(\lambda)$ into
\begin{align}
K_{\varepsilon}^-(\lambda)= \begin{pmatrix}
K_{\varepsilon,11}^-(\lambda) & K_{\varepsilon,12}^-(\lambda)\\[5pt]
K_{\varepsilon,21}^-(\lambda) & K_{\varepsilon,22}^-(\lambda)
\end{pmatrix} = \begin{pmatrix}
K_\varepsilon^-(\lambda)|_{H_{n+1} \to H_{n+1}} & K_\varepsilon^-(\lambda)|_{H_{n+1}^\perp \to H_{n+1}} \\
K_\varepsilon^-(\lambda)|_{H_{n+1} \to H_{n+1}^\perp} & K_\varepsilon^-(\lambda)|_{H_{n+1}^\perp \to H_{n+1}^\perp}
\end{pmatrix}. \nonumber
\end{align}
Using $1|_{H_{n+1}^\perp \to H_{n+1}} = 1|_{H_{n+1} \to H_{n+1}^\perp} = 0$, we find
\begin{align} 
\Vert K_{\varepsilon,12}^-(\lambda) \Vert &= \Vert (  - \varepsilon v(P_{-}(A)-\lambda)^{-1}v )|_{H_{n+1}^\perp \to H_{n+1}} \Vert \lesssim \varepsilon, \label{eq:K_eps12_nonint}\\
\Vert K_{\varepsilon,21}^-(\lambda) \Vert&=\Vert (  - \varepsilon v(P_{-}(A)-\lambda)^{-1}v )|_{H_{n+1} \to H_{n+1}^\perp}\Vert \lesssim \varepsilon, \label{eq:K_eps21_nonint}\\
\Vert K_{\varepsilon,22}^-(\lambda) -1|_{H_{n+1}^\perp \to H_{n+1}^\perp} \Vert&= \Vert ( - \varepsilon v(P_{-}(A)-\lambda)^{-1}v )|_{H_{n+1}^\perp \to H_{n+1}^\perp} \Vert \lesssim \varepsilon \label{eq:K_eps22_nonint}
\end{align}
for any $\lambda <0$. If $\varepsilon$ is sufficiently small, then $K_{\varepsilon,22}^-(\lambda)$ is invertible for any $\lambda <0$ and then
\begin{align}\label{eq:K_eps22_inv_nonint}
\Vert (K_{\varepsilon,22}^-(\lambda) )^{-1} - 1 |_{H_{n+1}^\perp \to H_{n+1}^\perp} \Vert \lesssim \varepsilon.
\end{align}
The SLFG formula (Lemma \ref{lem:slfg_formula}) now implies that $K_{\varepsilon}^-(\lambda)$ is invertible if and only its Schur complement
\begin{align}\label{eq:S_eps_nonint}
S_{\varepsilon}^-(\lambda) = K_{\varepsilon,11}^-(\lambda) - K_{\varepsilon,12}^-(\lambda) (K_{\varepsilon,22}^-(\lambda))^{-1}K_{\varepsilon,21}^-(\lambda)
\end{align}
is invertible in $H_{n+1}$. Here, 
\begin{align} \label{eq:K_eps122221_comb_nonint}
\Vert K_{\varepsilon,12}^-(\lambda) (K_{\varepsilon,22}^-(\lambda))^{-1}K_{\varepsilon,21}^-(\lambda) \Vert \lesssim \varepsilon^2
\end{align}
by \eqref{eq:K_eps12_nonint}, \eqref{eq:K_eps21_nonint} and \eqref{eq:K_eps22_inv_nonint}. The Schur complement $S_{\varepsilon}^-(\lambda)$ is finite rank and self-adjoint and therefore not invertible if and only if one of its eigenvalues is zero. Let us denote the eigenvalues of the Schur complement by $\{ \mu_k (S_{\varepsilon}^-(\lambda)) \}_{k=0}^n$. Any solution $\lambda$ to one of the $n+1$ equations 
\begin{align} \label{eq:eigeval_schurcomplt_S_k}
\mu_k (S_{\varepsilon}^-(\lambda))  = 0, \qquad k=0,...,n,
\end{align}
yields therefore a negative eigenvalue of $H(\varepsilon)$.\\

\textbf{Step 4: Showing existence of negative eigenvalues of $H(\varepsilon)$}\\

Each parametric eigenvalue function $ \lambda \mapsto \mu_k (S_{\varepsilon}^-(\lambda)) $, $k=0,..., n$, is continuous in $\lambda< 0$ and it is not difficult to show by an intermediate value theorem argument similar to that in the case $\alpha=0$, that each equation  $\mu_k (S_{\varepsilon}^-(\lambda)) =0 $ must have at least one solution $\lambda_k(\varepsilon)$, if $\varepsilon$ is sufficiently small. Since we have already shown that $H(\varepsilon)$ cannot have more than $n+1$ negative eigenvalues $\lambda_k(\varepsilon)$, it is clear that if the solutions $\lambda_k(\varepsilon)$ for each $k=0,...,n$ are distinct, then $H(\varepsilon)$ must have precisely $n+1$ negative eigenvalues, namely said solutions $\lambda_k(\varepsilon)$, $k=0,..., n$. However, it may happen that the solutions $\lambda_k(\varepsilon)$ of \eqref{eq:eigeval_schurcomplt_S_k} are not distinct. In this case, suppose $m$ equations $\mu_k (S_{\varepsilon}^-(\lambda))  = 0$ have the same solution, i.e.\ $\lambda_\ell(\varepsilon)=...=\lambda_{\ell+m-1}(\varepsilon)$ for some $\ell$ and hence $S_{\varepsilon}^-(\lambda)|_{\lambda=\lambda_\ell(\varepsilon)}$ has a zero eigenvalue of multiplicity $m$. Using \eqref{eq:slfg_kernel_dim} from the SLFG Lemma, we are allowed to conclude that if $S_{\varepsilon}^-(\lambda)|_{\lambda=\lambda_\ell(\varepsilon)}$ has a zero eigenvalue of multiplicity $m$, then also  $K_{\varepsilon}^-(\lambda)|_{\lambda=\lambda_\ell(\varepsilon)}$ has a zero eigenvalue of multiplicity $m$, which in turn means that $P_-(A)-\varepsilon V$ has the eigenvalue $\lambda=\lambda_\ell(\varepsilon)$ of multiplicity $m$ by the Birman-Schwinger principle. Alternatively, we can say that $P_-(A)-\varepsilon V$ has the eigenvalues $\lambda_\ell(\varepsilon)=...=\lambda_{\ell+m-1}(\varepsilon)$ counted with multiplicity. The number of negative eigenvalues of $H(\varepsilon)$ counted with multiplicities is therefore always precisely $n+1$. Each of them comes as one solution of one of the equations \eqref{eq:eigeval_schurcomplt_S_k}.\\

Although we have now argued the existence of precisely $n+1$ negative eigenvalues of $H(\varepsilon)$, we still have difficulties to directly extract the asymptotic behaviour, since the asymptotic expansion of the resolvent from \eqref{eq:vPv_non_int_expansion} projected onto $H_{n+1}$ has still multiple singular terms of varying degree.

To get hold of asymptotics, we consider different ``reference windows'' for $\lambda$ that scale with $\varepsilon$ so that $\varepsilon f_i(\lambda)$ is bounded above and below for a particular $i$. This allows us to seperate the singular terms and the eigenvalues of the Birman-Schwinger operator attributed to each singular term. The different degrees of singular terms lead to different asymptotics for the eigenvalues of the Birman-Schwinger operator and hence, after resolving the implicit equation \eqref{eq:eigeval_schurcomplt_S_k}, for the eigenvalues of $H(\varepsilon)$. \\

\textbf{Step 5: Extracting eigenvalue asymptotics - first reference window}\\

Let us first consider $\lambda<0$ with 
\begin{align} \label{eq:ref_window1_non_int}
C_1 \leq \varepsilon f_0(\lambda) = \varepsilon |\lambda|^{-1} \leq C_2
\end{align}
for some $C_1,C_2 >0$ that we specify later. This means we have $C_2^{-1}\varepsilon \leq |\lambda| \leq C_1^{-1} \varepsilon$ and therefore 
\begin{align}
\varepsilon f_1(\lambda) &\asymp \varepsilon^{\alpha'}, \label{eq:eps_f1_ref_window1} \\
\varepsilon f_2(\lambda) &\asymp \varepsilon^{1-\alpha'}, \label{eq:eps_f2_ref_window1}
\end{align}
for all $\lambda$ in the reference window. Let $G$ be the orthogonal complement of $H_n$ in $H_{n+1}$, i.e.\ $G = H_{n+1} \ominus H_n$. The space $G$ is one-dimensional. Recall that $S_\varepsilon^-(\lambda)$ acts only on $H_{n+1}$ and not the full space $L^2(\R^2)$. We view the Schur complement $S_\varepsilon^-(\lambda)$ now as a block operator on $H_n \oplus G$, i.e.\
\begin{align}
S_\varepsilon^-(\lambda) = \begin{pmatrix}
S_{\varepsilon,11}^-(\lambda) & S_{\varepsilon,12}^-(\lambda)\\
S_{\varepsilon,21}^-(\lambda) & S_{\varepsilon,22}^-(\lambda)
\end{pmatrix} = \begin{pmatrix}
S_\varepsilon^-(\lambda)|_{H_{n} \to H_{n}} & S_\varepsilon^-(\lambda)|_{G \to H_{n}} \\
S_\varepsilon^-(\lambda)|_{H_{n} \to G} & S_\varepsilon^-(\lambda)|_{G \to G}
\end{pmatrix}. \nonumber
\end{align}
Our idea is to apply the SLFG Lemma again to the operator $S_\varepsilon^-(\lambda)$ in the above block form. We have for $\varepsilon$ small enough 
\begin{align} 
S_{\varepsilon}^-(\lambda)  &=  K_{\varepsilon,11}^-(\lambda) - K_{\varepsilon,12}^-(\lambda) (K_{\varepsilon,22}^-(\lambda))^{-1}K_{\varepsilon,21}^-(\lambda)  \nonumber \\
&=  (1 - \varepsilon v (P_-(A) - \lambda)^{-1} v)|_{H_n \to H_n}  - K_{\varepsilon,12}^-(\lambda) (K_{\varepsilon,22}^-(\lambda))^{-1}K_{\varepsilon,21}^-(\lambda)  \nonumber\\
&=  (1 - \varepsilon f_0(\lambda) vP_0^-v - \varepsilon f_1(\lambda) v\psi^- \langle v\psi^-, . \rangle - \varepsilon f_2(\lambda) v\varphi^- \langle v\varphi^-, . \rangle - \tilde{R}_\varepsilon(\lambda) )|_{H_n \to H_n}  \nonumber \\
& \hspace{6.3cm}- K_{\varepsilon,12}^-(\lambda) (K_{\varepsilon,22}^-(\lambda))^{-1}K_{\varepsilon,21}^-(\lambda).   \nonumber
\end{align}
Here, the operator $\tilde{R}_{\varepsilon}(\lambda)$ is the operator $\tilde{R}_{\varepsilon}(\lambda)= \varepsilon  \cdot ( v (P_-(A) - \lambda)^{-1} v -  S(\lambda))|_{H_n \to H_n}  $ with the singular terms $S(\lambda)$ from \eqref{eq:singular_part_nonint}. Since the $\varepsilon$-independent part of $\tilde{R}_{\varepsilon}(\lambda)$ is the uniformly bounded for bounded $|\lambda|$, it holds $\Vert \tilde{R}_{\varepsilon}(\lambda) \Vert \lesssim \varepsilon$. By the choice of $H_n$,
\begin{align}
1|_{G \to H_n} &= 1|_{H_n \to G} = 0, \nonumber\\
vP_0^-v|_{G \to H_n} &= vP_0^-v|_{H_n \to G}= vP_0^-v|_{G \to G} =0 , \nonumber \\
v\psi^- \langle v \psi^- , . \rangle |_{G \to H_n} &= v\psi^- \langle v \psi^- , . \rangle |_{H_n \to G} =  v\psi^- \langle v \psi^- , . \rangle |_{G \to G}= 0 ,\nonumber
\end{align}
so using \eqref{eq:K_eps122221_comb_nonint}, we find now for example
\begin{align}
S_{\varepsilon,12}^-(\lambda) &= S_{\varepsilon}^-(\lambda) |_{G \to H_n}  = - \varepsilon f_2(\lambda) (v\varphi^- \langle v\varphi^-, . \rangle)  |_{G \to H_n} + R_{\varepsilon,12}(\lambda) \nonumber
\end{align}
where 
\begin{align}
R_{\varepsilon,12}(\lambda) =( - \tilde{R}_{\varepsilon} (\lambda) - K_{\varepsilon,12}^-(\lambda) (K_{\varepsilon,22}^-(\lambda))^{-1}K_{\varepsilon,21}^-(\lambda) ) |_{G \to H_n} \nonumber
\end{align}
is a linear operator with $\Vert R_{\varepsilon,12}(\lambda)\Vert \lesssim \varepsilon + \varepsilon^2 \lesssim \varepsilon $ for all $\lambda <0 $ satisfying \eqref{eq:ref_window1_non_int}. Proceeding similarly with the other blocks of $S_{\varepsilon}^-(\lambda)$, we find that
\begin{align} 
S_{\varepsilon,11}^-(\lambda) &= (1 - \varepsilon f_0(\lambda) vP_0^-v -\varepsilon f_1(\lambda) v\psi^- \langle v\psi^-, . \rangle  - \varepsilon f_2(\lambda) (v\varphi^- \langle v\varphi^-, . \rangle)  |_{H_n \to H_n} + R_{\varepsilon,11}(\lambda)  , \label{eq:S_eps11_nont_int_window1_expansion} \\
S_{\varepsilon,12}^-(\lambda) &=  -\varepsilon f_2(\lambda) (v\varphi^- \langle v\varphi^-, . \rangle)  |_{G \to H_n} + R_{\varepsilon,12}(\lambda) , \label{eq:S_eps12_nont_int_window1_expansion}\\ 
S_{\varepsilon,21}^-(\lambda) &=  - \varepsilon f_2(\lambda) (v\varphi^- \langle v\varphi^-, . \rangle)  |_{H_n \to G} + R_{\varepsilon,21}(\lambda)  , \label{eq:S_eps21_nont_int_window1_expansion}\\
 S_{\varepsilon,22}^-(\lambda) &= (1 - \varepsilon f_2(\lambda) v\varphi^- \langle v\varphi^-, . \rangle )  |_{G \to G} + R_{\varepsilon,22}(\lambda) \label{eq:S_eps22_nont_int_window1_expansion}
\end{align}
where $R_{\varepsilon,ij}(\lambda)$ are some linear operators with $\Vert R_{\varepsilon,ij}(\lambda)\Vert \lesssim \varepsilon $ for all $\lambda <0 $ satisfying \eqref{eq:ref_window1_non_int}. Now, $ \varepsilon f_2(\lambda) \lesssim \varepsilon^{1-\alpha'}$ implies
\begin{align} 
\Vert S_{\varepsilon,12}^-(\lambda) \Vert& \lesssim \varepsilon^{1-\alpha'} + \varepsilon \lesssim \varepsilon^{1-\alpha'} , \label{eq:S_eps12_nonint}\\
\Vert S_{\varepsilon,21}^-(\lambda) \Vert& \lesssim \varepsilon^{1-\alpha'} + \varepsilon \lesssim \varepsilon^{1-\alpha'} , \label{eq:S_eps21_nonint}\\
\Vert S_{\varepsilon,22}^-(\lambda) -1|_{G \to G} \Vert& \lesssim \varepsilon^{1-\alpha'} + \varepsilon \lesssim \varepsilon^{1-\alpha'}. \label{eq:S_eps22_nonint}
\end{align}
From \eqref{eq:S_eps22_nonint} it follows that once $\varepsilon$ is small enough, $S_{\varepsilon,22}^-(\lambda)$ is invertible for all $\lambda<0$ in the reference window \eqref{eq:ref_window1_non_int} and then 
\begin{align}
\Vert (S_{\varepsilon,22}^-(\lambda))^{-1} -1|_{G \to G} \Vert&  \lesssim \varepsilon^{1-\alpha'}. \label{eq:S_eps22inv_nonint}
\end{align}
Applying the SLFG Lemma now reveals that $S_\varepsilon^-(\lambda)$ is invertible if and only if the Schur complement
\begin{align}
T_{\varepsilon}^-(\lambda) =  S_{\varepsilon,11}^-(\lambda) - S_{\varepsilon,12}^-(\lambda) (S_{\varepsilon,22}^-(\lambda))^{-1}S_{\varepsilon,21}^-(\lambda) \nonumber
\end{align}
is invertible in $H_n$. Here,
\begin{align}
\Vert S_{\varepsilon,12}^-(\lambda) (S_{\varepsilon,22}^-(\lambda))^{-1}S_{\varepsilon,21}^-(\lambda) \Vert \lesssim \varepsilon^{2(1-\alpha')} \label{eq:S_eps122221_eps_bound}
\end{align}
due to \eqref{eq:S_eps12_nonint}, \eqref{eq:S_eps21_nonint} and \eqref{eq:S_eps22inv_nonint}. 

What we have achieved now is that we reduced the Schur complement $S_{\varepsilon}^-(\lambda)$ by one dimension to $T_{\varepsilon}^-(\lambda)$ by taking another Schur complement. The dimension we got rid of by restricting our view to the reference window is the dimension spanned by $v\psi_n$ which appears only in the lower order singular term $\varepsilon f_2(\lambda) (v\varphi^- \langle v\varphi^-, . \rangle) $ of the Birman-Schwinger operator. When taking this second Schur complement, we pay with another term added of order $\varepsilon^{2(1-\alpha')}$.

We are now ready to find asymptotic expansions of negative eigenvalues of $H(\varepsilon)$ within our chosen reference window. The Schur complement $T_{\varepsilon}^-(\lambda)$ is not invertible if and only if one of its eigenvalues is zero. We denote the eigenvalues of $T_{\varepsilon}^-(\lambda)$ by $\mu_k (T_{\varepsilon}^-(\lambda))$, $k=0, ..., n-1$. We show that each equation $\mu_k (T_{\varepsilon}^-(\lambda))=0$ has at least one solution, if we choose the frame of the reference window, given by $C_1$, $C_2$, appropriately.

The Schur complement $T_\varepsilon^-(\lambda)$ can be written as
\begin{align}  
T_{\varepsilon}^-(\lambda) &=( 1 - \varepsilon f_0(\lambda) vP_0^-v -\varepsilon f_1(\lambda) v\psi^- \langle v\psi^-, . \rangle - \varepsilon f_2(\lambda) v\varphi^- \langle v\varphi^-, . \rangle  ) |_{H_n \to H_n} \nonumber \\
& \qquad\qquad \qquad  + R_{\varepsilon,11}(\lambda) - S_{\varepsilon,12}^-(\lambda) (S_{\varepsilon,22}^-(\lambda))^{-1}S_{\varepsilon,21}^-(\lambda)  \nonumber
\end{align}
where $R_{\varepsilon,11}(\lambda)$ is some linear operator with $\Vert R_{\varepsilon,11}(\lambda)\Vert \lesssim \varepsilon $ for all $\lambda <0 $ satisfying \eqref{eq:ref_window1_non_int}. Because of \eqref{eq:eps_f1_ref_window1}, \eqref{eq:eps_f2_ref_window1} and \eqref{eq:S_eps122221_eps_bound} we see that for $\lambda$ in the given reference window
\begin{align} \label{eq:T_eps_minus_const_estimate}
\Vert T_\varepsilon^-(\lambda) - (1 - \varepsilon f_0(\lambda) vP_0^-v)|_{H_n \to H_n} \Vert \lesssim \varepsilon^{\alpha'} +\varepsilon^{1-\alpha'} +\varepsilon+ \varepsilon^{2(1-\alpha')} \lesssim \varepsilon^{\min \{ \alpha',1-\alpha'\}}.
\end{align}

Let us briefly focus on the operator $(vP_0^- v) |_{H_n \to H_n}$. It is clearly finite rank and self-adjoint. But it is also positive. It is easily seen to be non-negative, because for any $\phi \in H_n $ 
\begin{align}
\langle \phi , (vP_0^- v) \phi \rangle = \Vert P_0^- v \phi \Vert^2_{L^2(\R^2)} \geq 0. \nonumber
\end{align}
Now suppose there is some $\phi \in H_n $ with $\langle \phi , (vP_0^- v) \phi \rangle = \Vert P_0^- v \phi \Vert^2_{L^2(\R^2)} = 0$. Then $v\phi$ must be orthogonal to all zero eigenstates $\{\psi_k^-\}_{k=0}^{n-1}$. But this implies 
\begin{align}
\langle \phi, v\psi_k^-\rangle = \langle v\phi, \psi_k^-\rangle=0 \nonumber
\end{align}
for any $k=0,...,n-1$, hence $\phi \in H_n^\perp$ and so $\phi=0$. This shows that $(vP_0^- v) |_{H_n \to H_n}$ is positive and thus it has only positive eigenvalues. Let $\mu_k$, $k=0,...,n-1$, denote these eigenvalues. 

We turn back to the eigenvalues of the Schur complement $T_\varepsilon^-(\lambda)$. Suppose $C_1,C_2$ are chosen such that $\{ \mu_k^{-1} \}_{k=0}^{n-1} \subset (C_1+\delta,C_2-\delta)$ for some small $\delta>0$. By \eqref{eq:T_eps_minus_const_estimate}, 
\begin{align} \label{eq:mu_k_T_eps_estimate}
\left| \mu_k(T_\varepsilon^-(\lambda)) - (1 - \varepsilon f_0(\lambda) \mu_k) \right| \lesssim \varepsilon^{\min \{ \alpha',1-\alpha'\}}
\end{align}
for all $\lambda$ in the reference window, if $\varepsilon$ is small enough. It follows that $\mu_k(T_\varepsilon^-(\lambda)) > 0$ for small enough $\varepsilon$ if $\lambda$ is chosen such that $\varepsilon f_0(\lambda) \leq  \mu_k^{-1}-\delta$ and $\mu_k(T_\varepsilon^-(\lambda')) < 0$ for small enough $\varepsilon$ if $\lambda'$ is chosen such that $\varepsilon f_0(\lambda') \geq \mu_k^{-1} + \delta$. Since $\mu_k(T_\varepsilon^-(\lambda)) $ is continuous in $\lambda$, there exists $\lambda_k(\varepsilon)$ such that $\mu_k(T_\varepsilon^-(\lambda_k(\varepsilon)))  =0 $ by the intermediate value theorem. Finally, for this $\lambda_k(\varepsilon)$ holds according to \eqref{eq:mu_k_T_eps_estimate}
\begin{align}
|1- \varepsilon f_0(\lambda_k(\varepsilon)) \mu_k|  \lesssim \varepsilon^{\min \{ \alpha',1-\alpha'\}}  \nonumber
\end{align}
which yields
\begin{align}
\lambda_k(\varepsilon) = - \mu_k \, \varepsilon \, (1+ O(\varepsilon^{\min \{ \alpha',1-\alpha'\}} )) \nonumber
\end{align}
as $\varepsilon \searrow 0$.\\

\textbf{Step 6: Extracting eigenvalue asymptotics - second reference window}\\

Let us now consider $\lambda<0$ with 
\begin{align} \label{eq:ref_window2_non_int}
C_1 \leq \varepsilon |\lambda|^{-\alpha'}  \leq C_2
\end{align}
for some yet-to-be-specified $C_1,C_2 >0$. This means we have $(C_2^{-1}\varepsilon)^\frac{1}{\alpha'} \leq |\lambda| \leq (C_1^{-1} \varepsilon)^\frac{1}{\alpha'}$ and therefore 
\begin{align}
 \varepsilon f_0(\lambda) &\asymp \varepsilon^{1-\frac{1}{\alpha'}}, \label{eq:eps_f0_ref_window2} \\
 \varepsilon f_1(\lambda) &\asymp \varepsilon^{2-\frac{1}{\alpha'}}. \label{eq:eps_f1_ref_window2} \\
\varepsilon f_2(\lambda) &\asymp 1. \label{eq:eps_f2_ref_window2}
\end{align}
We decompose the Schur complement $S_\varepsilon^-(\lambda)$ as in the previous step into 
\begin{align}
S_\varepsilon^-(\lambda) = \begin{pmatrix}
S_{\varepsilon,11}^-(\lambda) & S_{\varepsilon,12}^-(\lambda)\\
S_{\varepsilon,21}^-(\lambda) & S_{\varepsilon,22}^-(\lambda)
\end{pmatrix} = \begin{pmatrix}
S_\varepsilon^-(\lambda)|_{H_{n} \to H_{n}} & S_\varepsilon^-(\lambda)|_{G \to H_{n}} \\
S_\varepsilon^-(\lambda)|_{H_{n} \to G} & S_\varepsilon^-(\lambda)|_{G \to G}
\end{pmatrix}, \nonumber
\end{align}
where again $G=H_{n+1} \ominus H_n$. As in the case of the first reference window, the blocks then satisfy for $\varepsilon$ small enough equations \eqref{eq:S_eps11_nont_int_window1_expansion}-\eqref{eq:S_eps22_nont_int_window1_expansion} with some linear operators $R_{\varepsilon,ij}(\lambda)$ with $\Vert R_{\varepsilon,ij}(\lambda)\Vert \lesssim \varepsilon $ for all $\lambda <0 $ satisfying \eqref{eq:ref_window2_non_int}. We apply the SLFG Lemma again, but this time we exchange the roles of $S_{11}^-(\lambda)$ and $S_{22}^-(\lambda)$. The choice of the reference window implies this time
\begin{align} 
\Vert (\varepsilon f_0(\lambda ))^{-1} S_{\varepsilon,11}^-(\lambda) -(vP_0^-v)|_{H_n \to H_n} \Vert& \lesssim \varepsilon^{\frac{1}{\alpha'}-1} , \label{eq:S_eps11_nonint_window2}  \\
\Vert S_{\varepsilon,12}^-(\lambda) \Vert& \lesssim 1, \label{eq:S_eps12_nonint_window2}\\
\Vert S_{\varepsilon,21}^-(\lambda) \Vert& \lesssim 1. \label{eq:S_eps21_nonint_window2}
\end{align}
Since $(vP_0^-v)|_{H_n \to H_n} $ is positive and hence invertible, it follows that once $\varepsilon$ is small enough, $S_{\varepsilon,11}^-(\lambda)$ is invertible for all $\lambda<0$ in the reference window \eqref{eq:ref_window2_non_int} and 
\begin{align}
\Vert (S_{\varepsilon,11}^-(\lambda))^{-1} \Vert &  \asymp \varepsilon^{\frac{1}{\alpha'}-1}. \label{eq:S_eps22inv_nonint_window2}
\end{align}
Applying the SLFG Lemma now shows that $S_\varepsilon^-$ is invertible if and only if the Schur complement
\begin{align} \label{eq:W_eps_def}
W_{\varepsilon}^-(\lambda) =  S_{\varepsilon,22}^-(\lambda) - S_{\varepsilon,21}^-(\lambda) (S_{\varepsilon,11}^-(\lambda))^{-1}S_{\varepsilon,12}^-(\lambda)
\end{align}
is invertible in $G$. Here,
\begin{align}
\Vert S_{\varepsilon,21}^-(\lambda) (S_{\varepsilon,11}^-(\lambda))^{-1}S_{\varepsilon,12}^-(\lambda) \Vert \lesssim \varepsilon^{\frac{1}{\alpha'}-1} \label{eq:S_eps211112_eps_bound}
\end{align}
due to \eqref{eq:S_eps12_nonint_window2}, \eqref{eq:S_eps21_nonint_window2} and \eqref{eq:S_eps22inv_nonint_window2}. This time, $W_{\varepsilon}^-(\lambda) $ is a rank one operator and can be written as
\begin{align}
W_{\varepsilon}^-(\lambda) = w_\varepsilon^-(\lambda) \, \varphi_n \langle \varphi_n , . \rangle  \nonumber
\end{align}
where $\varphi_n$ is an $L^2$-normalized state that spans $G$. The scalar function $w_\varepsilon^-(\lambda)$ is given by
\begin{align}
w_\varepsilon^-(\lambda) = \langle \varphi_n , W_{\varepsilon}^-(\lambda)\varphi_n  \rangle. \nonumber
\end{align}
The Schur complement $W_{\varepsilon}^-(\lambda)$ is not invertible if and only if $w_\varepsilon^-(\lambda) = 0$. 

We argue that $w_\varepsilon^-(\lambda) = 0$ has at least one solution for appropriately chosen reference window frame. Let
\begin{align}
\nu_n =\langle v \varphi^-, Q(v \varphi^-)\rangle  = \Vert Q(v \varphi^-) \Vert_{L^2(\R^2)}^{2} \nonumber
\end{align}
where $Q$ denotes the orthogonal projection onto $G$ (we may now understand $G$ as a subspace of $L^2(\R^2)$). We remark that $\nu_n>0$ since $v\varphi^- \notin H_n$. Let $C_1,C_2>0$ be such that $\nu_n^{-1}  \in (C_1+\delta,C_2-\delta)$ for some small $\delta >0$. Then, from \eqref{eq:W_eps_def}, \eqref{eq:S_eps211112_eps_bound} and \eqref{eq:S_eps22_nont_int_window1_expansion} follows
\begin{align}
|w_\varepsilon^-(\lambda) - (1-\varepsilon f_2(\lambda) \nu_n )| \lesssim \varepsilon + \varepsilon^{\frac{1}{\alpha'}-1} \lesssim \varepsilon^{\min\{1,\frac{1}{\alpha'}-1\}}. \label{eq:w_eps_estimate}
\end{align} 
We now always find $\lambda$ in our reference window, such that $\varepsilon f_2(\lambda)\leq \nu_n^{-1}-\delta$. For this $\lambda$ we conclude $w_\varepsilon^-(\lambda)>0$, if $\varepsilon$ is sufficiently small. We also always find $\lambda'$ in our reference window, such that $\varepsilon f_2(\lambda')\geq \nu_n^{-1}+\delta$ which then yields $w_\varepsilon^-(\lambda')<0$. Since $w_\varepsilon^-(\lambda) $ is continuous in $\lambda$, we now find $\lambda_n(\varepsilon)$ such that $w_\varepsilon^-(\lambda_n(\varepsilon))   =0 $. This solution satisfies 
\begin{align}
|1- \varepsilon f_2(\lambda_k(\varepsilon)) \nu_n|  \lesssim \varepsilon^{\min\{1,\frac{1}{\alpha'}-1\}} \nonumber
\end{align}
due to \eqref{eq:w_eps_estimate}, which implies with the definition of $f_2(\lambda)$ that
\begin{align}
\lambda_n(\varepsilon) = - \left( \frac{4^{\alpha'-1} \Gamma(\alpha') }{\pi \Gamma(1-\alpha')}\, \nu_n \right)^\frac{1}{\alpha'}  \varepsilon^\frac{1}{\alpha'}   \, (1+ O(\varepsilon^{\min\{1,\frac{1}{\alpha'}-1\}} ))  \nonumber
\end{align}
as $\varepsilon \searrow 0$.

Note that if $0< \alpha < 1$, then $n=0$, $\alpha' = \alpha$ and $H_n = \{ 0 \}$. The first Schur complement $S_\varepsilon^-(\lambda)$ is already a rank one operator and doesn't need to be decomposed further. We can then skip the first reference window and directly work with $W_\varepsilon^-(\lambda) = S_{\varepsilon}^-(\lambda)$ in the second reference window. Therefore, no error term of order $\varepsilon^{\frac{1}{\alpha'}-1}$ appears and one eventually finds that $\lambda_0(\varepsilon)$ satisfies
\begin{align}
\lambda_0(\varepsilon) = - \left( \frac{4^{\alpha'-1} \Gamma(\alpha') }{\pi \Gamma(1-\alpha')}\, \nu_n \right)^\frac{1}{\alpha'}  \varepsilon^\frac{1}{\alpha'}  \, (1+ O(\varepsilon ))  \nonumber
\end{align}
as $\varepsilon \searrow 0$.
\begin{flushright}
$\square$
\end{flushright}

\subsection{Integer flux - Proof of Theorem \ref{thm:expansion_int}}

For the case of integer flux, we basically repeat the procedure we applied in the non-integer case, i.e.\ choosing suitable reference windows and taking iterated Schur complements. Of course, due the resolvent expansion that we now draw from Theorem \ref{thm:resolvent_int}, the explicit first order terms and the bounds on the second order terms will change accordingly. \\

\textbf{Step 1: Preliminaries}\\

Let $\alpha > 0$, $\alpha \in \mathbb{Z}$ and $n=\alpha-1$. Again, the Aharonov-Casher states $\{\psi_k^- \}_{k=0}^{n-1}$ span the zero eigenspace of $P_-(A)$. Additionally, there are two virtual bound states at zero, namely the generalized Aharonov-Casher states $\psi_n^-$ and $\psi_{n+1}^-$. All (generalized) Aharonov-Casher states states are bounded and linearly independent, so $v\psi_k^- \in L^2(\R^2)$ for all $k=0,...,n+1$ and all $v\psi_k^-$ are linearly independent, given $V$ satisfies Assumption \ref{assump:potentialV}.

Set now
\begin{align}
H_{n} &= \operatorname{span}\{ v\psi_k^- \}_{k=0}^{n-1}, \nonumber \\
H_{n+1} &= \operatorname{span}(\{ v\psi_k^- \}_{k=0}^{n-1} \cup \{ v\varphi_2^-\}) ,\nonumber \\
H_{n+2} &= \operatorname{span}(\{ v\psi_k^- \}_{k=0}^{n-1} \cup \{ v\varphi_2^-, v\varphi_1^-\})  \nonumber
\end{align}
where $\varphi_1^-$, $\varphi_2^-$ are the virtual bound states from Theorem \ref{thm:resolvent_int}.
The spaces $H_{n}$, $H_{n+1}$ and $H_{n+2}$ are $n$-, $(n+1)$- resp.~$(n+2)$-dimensional subspaces of $L^2(\R^2)$. Further define for $\lambda <0$, $|\lambda|$ small enough,
\begin{align}
f_0(\lambda)&= -\lambda^{-1} = |\lambda|^{-1},\nonumber \\
f_1(\lambda)&=\frac{1}{\pi \lambda (\log |\lambda| +m)} = - \frac{1}{\pi |\lambda| (\log |\lambda| +m)},\nonumber \\
f_2(\lambda)&= - \log|\lambda|. \nonumber
\end{align}
with $m$ as it appears in Theorem \ref{thm:resolvent_int}.

Theorem \ref{thm:resolvent_int} then implies
\begin{align}
v(P_-(A)-\lambda)^{-1}v= f_0(\lambda) vP_0^- v+ f_1(\lambda)v\Pi_{22}v +f_2(\lambda) vKv + O(1) \label{eq:vPv_int_expansion}
\end{align}
in $\mathcal{L}(L^2(\R^2))$ as $\lambda \nearrow 0$, with $\Pi_{22}$ and $K$ as defined in \eqref{eq:definition_Pi_op} and \eqref{eq:definition_K_op}. Notice that all terms on the right-hand side of \eqref{eq:vPv_int_expansion} are self-adjoint. \\

\textbf{Step 2: Bounding the number of negative eigenvalues of $H(\varepsilon)$}\\

Similarly as in the previous cases $\alpha = 0$ and $\alpha \in\mathbb{R} \setminus \mathbb{Z}$, the asymptotic equation \eqref{eq:vPv_int_expansion} allows to argue that $H(\varepsilon)$ has no more than $n+2$ negative eigenvalues. Again, $P_+(A)-\varepsilon V$ contributes no negative eigenvalues for small enough $\varepsilon$, so one needs to argue that $P_-(A)-\varepsilon V$ has at most $n+2$ negative eigenvalues for small enough $\varepsilon$. The reason for that is that the operator 
\begin{align}
S(\lambda)=f_0(\lambda) vP_0^-v +f_1(\lambda) v\psi^- \langle v\psi^-, . \rangle + f_2(\lambda) v\varphi^- \langle v\varphi^-, . \rangle, \nonumber
\end{align}
which collects all terms on the right hand side of \eqref{eq:vPv_int_expansion} that are singular as $\lambda \to 0$, is rank $n+2$. Denote by $\lambda_k^-(\varepsilon)$, $k=0,1,...$,  the negative eigenvalues of $P_-(A) - \varepsilon V$ and $\mu_k^-(\lambda)$, $k=0,1,...$, the non-negative eigenvalues of $v(P_{-}(A)-\lambda)^{-1}v$. Observing that $f_0(\lambda)$, $f_1(\lambda)$ and $f_2(\lambda)$ are positive for $|\lambda|$ small enough, one can eventually show as in the previous cases that $
|\varepsilon \mu_k^-(\lambda) | \leq 1 $ for $k> n+1$ and $|\lambda|, \varepsilon $ small enough. Again, one concludes for $\varepsilon $ small enough
\begin{align}
\#\{ \lambda_k^\pm(\varepsilon)  \} &= \lim_{\lambda \nearrow 0} \# \{ \lambda_k(\varepsilon) \, : \, \lambda_k(\varepsilon)< \lambda  \} = \lim_{\lambda \nearrow 0} \# \{ \varepsilon\mu_k^\pm(\lambda) \, : \, \varepsilon \mu_k^\pm(\lambda)>1 \} \nonumber \\
& \leq  \lim_{\lambda \nearrow 0}  \# \{ \varepsilon\mu_0^\pm(\lambda),...,\varepsilon\mu_{n+1}^\pm(\lambda) \} = n+2 \nonumber
\end{align}
by the Birman-Schwinger principle. This shows that $P_-(A) - \varepsilon V$ and hence $H(\varepsilon)$ has at most $n+2$ negative eigenvalues.\\

\textbf{Step 3: Reduction to a finite rank operator}\\

To extract eigenvalue asymptotics, we consider 
\begin{align}
K_\varepsilon^-(\lambda) = 1 - \varepsilon v(P_-(A)-\lambda)^{-1}v \nonumber
\end{align}
as before and check its invertibility. We decompose this operator into
\begin{align}
K_{\varepsilon}^-(\lambda)= \begin{pmatrix}
K_{\varepsilon,11}^-(\lambda) & K_{\varepsilon,12}^-(\lambda)\\[5pt]
K_{\varepsilon,21}^-(\lambda) & K_{\varepsilon,22}^-(\lambda)
\end{pmatrix} = \begin{pmatrix}
K_\varepsilon^-(\lambda)|_{H_{n+2} \to H_{n+2}} & K_\varepsilon^-(\lambda)|_{H_{n+2}^\perp \to H_{n+2}} \\
K_\varepsilon^-(\lambda)|_{H_{n+2} \to H_{n+2}^\perp} & K_\varepsilon^-(\lambda)|_{H_{n+2}^\perp \to H_{n+2}^\perp}
\end{pmatrix}. \nonumber
\end{align}
Following the arguments in the previous section, one finds that for $\varepsilon$ small enough, $K_{\varepsilon}^-(\lambda)$ is invertible if and only if its Schur complement
\begin{align}\label{eq:S_eps_int}
S_{\varepsilon}^-(\lambda) = K_{\varepsilon,11}^-(\lambda) - K_{\varepsilon,12}^-(\lambda) (K_{\varepsilon,22}^-(\lambda))^{-1}K_{\varepsilon,21}^-(\lambda)
\end{align}
is invertible in $H_{n+2}$. Again, one finds 
\begin{align} \label{eq:K_eps122221_comb_int}
\Vert K_{\varepsilon,12}^-(\lambda) (K_{\varepsilon,22}^-(\lambda))^{-1}K_{\varepsilon,21}^-(\lambda) \Vert \lesssim \varepsilon^2 
\end{align}
and any solution $\lambda$ to one of the $n+2$ equations 
\begin{align} 
\mu_k (S_{\varepsilon}^-(\lambda))  = 0, \qquad k=0,...,n+1, \nonumber
\end{align}
where $\mu_k (S_{\varepsilon}^-(\lambda)) $ denote the eigenvalues of $S_{\varepsilon}^-(\lambda) $, yields a negative eigenvalue of $H(\varepsilon)$.\\

\textbf{Step 4: Showing existence of negative eigenvalues of $H(\varepsilon)$}\\

This step is completely analogous to the same step in the non-integer flux case. Each solution of one of the equations $ \mu_k (S_{\varepsilon}^-(\lambda)) $, $k=0,..., n+1$, yields precisely one eigenvalue $\lambda_k(\varepsilon)$ of $H(\varepsilon)$. We find that if $\varepsilon$ is small enough, $H(\varepsilon)$ has precisely $n+2$ eigenvalues, counting multiplicities.\\ 

Next, we compute the eigenvalue asymptotics by investigating the finite dimensional operator $S_{\varepsilon}^-(\lambda)$ and asking when it is not invertible in $H_{n+2}$. This time we need to apply three different reference windows.\\

\textbf{Step 5: Extracting eigenvalue asymptotics - first reference window}\\

As in the case of non-integer flux, we first consider $\lambda<0$ with 
\begin{align} \label{eq:ref_window1_int}
C_1 \leq \varepsilon f_0(\lambda)  = \varepsilon |\lambda|^{-1} \leq C_2
\end{align}
for some $C_1,C_2 >0$ that we specify later. It follows
\begin{align}
\varepsilon f_1(\lambda) &\asymp |\log\varepsilon|^{-1}, \label{eq:eps_f1_ref_window1_int} \\
\varepsilon f_2(\lambda) &\asymp |\varepsilon \log\varepsilon |. \label{eq:eps_f2_ref_window1_int}
\end{align}

Let $G_{2}$ denote the orthogonal complement of $H_n$ in $H_{n+2}$, i.e.\ $G_{2} = H_{n+2} \ominus H_n$. The space $G_{2}$ is two-dimensional. Recall that $S_\varepsilon^-(\lambda)$ acts on $H_{n+2}$. We decompose the Schur complement $S_\varepsilon^-(\lambda)$ into 
\begin{align}
S_\varepsilon^-(\lambda) = \begin{pmatrix}
S_{\varepsilon,11}^-(\lambda) & S_{\varepsilon,12}^-(\lambda)\\
S_{\varepsilon,21}^-(\lambda) & S_{\varepsilon,22}^-(\lambda)
\end{pmatrix} = \begin{pmatrix}
S_\varepsilon^-(\lambda)|_{H_{n} \to H_{n}} & S_\varepsilon^-(\lambda)|_{G_{2} \to H_{n}} \\
S_\varepsilon^-(\lambda)|_{H_{n} \to G_{2}} & S_\varepsilon^-(\lambda)|_{G_{2} \to G_{2}}
\end{pmatrix}. \nonumber
\end{align}
Using \eqref{eq:vPv_int_expansion} and \eqref{eq:K_eps122221_comb_int}, we find for $\varepsilon$ small enough,
\begin{align} 
S_{\varepsilon,12}^-(\lambda)  &=  (  K_{\varepsilon,11}^-(\lambda) - K_{\varepsilon,12}^-(\lambda) (K_{\varepsilon,22}^-(\lambda))^{-1}K_{\varepsilon,21}^-(\lambda) )|_{G_{2} \to H_n}  \nonumber \\
& = (-\varepsilon f_1(\lambda) v\Pi_{22}v - \varepsilon f_2(\lambda) vKv) |_{G_2 \to H_n} + R_{\varepsilon,12}(\lambda), \nonumber \\
S_{\varepsilon,21}^-(\lambda) &=   (-\varepsilon f_1(\lambda) v\Pi_{22}v - \varepsilon f_2(\lambda) vKv)  |_{H_n \to G_{2}} + R_{\varepsilon,21}(\lambda), \nonumber \\
S_{\varepsilon,22}^-(\lambda) &= (1-  \varepsilon f_1(\lambda) v\Pi_{22}v - \varepsilon f_2(\lambda) vKv) |_{G_{2} \to G_{2}} + R_{\varepsilon,22}(\lambda)  \nonumber
\end{align}
where $R_{\varepsilon,ij}(\lambda)$ are some linear operators with $\Vert R_{\varepsilon,ij}(\lambda)\Vert \lesssim \varepsilon $ for all $\lambda <0 $ satisfying \eqref{eq:ref_window1_int}. Now, \eqref{eq:eps_f1_ref_window1_int} and \eqref{eq:eps_f2_ref_window1_int} imply
\begin{align} 
\Vert S_{\varepsilon,12}^-(\lambda) \Vert& \lesssim  |\log\varepsilon|^{-1} + |\varepsilon \log\varepsilon | + \varepsilon \lesssim |\log\varepsilon|^{-1}, \label{eq:S_eps12_int}\\
\Vert S_{\varepsilon,21}^-(\lambda) \Vert& \lesssim  |\log\varepsilon|^{-1} +|\varepsilon \log\varepsilon | + \varepsilon \lesssim |\log\varepsilon|^{-1}, \label{eq:S_eps21_int}\\
\Vert S_{\varepsilon,22}^-(\lambda) -1|_{G_{2} \to G_{2} } \Vert& \lesssim  |\log\varepsilon|^{-1} + |\varepsilon \log\varepsilon | + \varepsilon \lesssim |\log\varepsilon|^{-1}. \label{eq:S_eps22_int}
\end{align}
From \eqref{eq:S_eps22_int} it follows that once $\varepsilon$ is small enough, $S_{\varepsilon,22}^-(\lambda)$ is invertible for all $\lambda<0$ in the reference window \eqref{eq:ref_window1_int} and then 
\begin{align}
\Vert (S_{\varepsilon,22}^-(\lambda))^{-1} -1|_{G_{2} \to G_{2}} \Vert&  \lesssim  |\log\varepsilon|^{-1}. \label{eq:S_eps22inv_int}
\end{align}
Applying the SLFG Lemma, we see that $S_\varepsilon^-(\lambda)$ is invertible if and only if the Schur complement
\begin{align}
T_{\varepsilon}^-(\lambda) =  S_{\varepsilon,11}^-(\lambda) - S_{\varepsilon,12}^-(\lambda) (S_{\varepsilon,22}^-(\lambda))^{-1}S_{\varepsilon,21}^-(\lambda) \nonumber
\end{align}
is invertible in $H_n$. Here, the estimate
\begin{align}
\Vert S_{\varepsilon,12}^-(\lambda) (S_{\varepsilon,22}^-(\lambda))^{-1}S_{\varepsilon,21}^-(\lambda) \Vert \lesssim (\log\varepsilon)^{-2} \label{eq:S_eps122221_eps_bound_int}
\end{align}
holds, due to \eqref{eq:S_eps12_int}, \eqref{eq:S_eps21_int} and \eqref{eq:S_eps22inv_int}. 

As before, let us denote eigenvalues of $T_{\varepsilon}^-(\lambda)$ by $\mu_k (T_{\varepsilon}^-(\lambda))$, $k=0, ..., n-1$, and assume them to be sorted in non-decreasing order. The Schur complement $T_{\varepsilon}^-(\lambda)$ is not invertible if and only if $\mu_k (T_{\varepsilon}^-(\lambda))=0$ for some $k$. As in the case of non-integer flux, it may be argued that each equation $\mu_k (T_{\varepsilon}^-(\lambda))=0$ has at least one solution $\lambda_k(\varepsilon)$, if the frame of the reference window is chosen appropriately. If $\{ \mu_k \}_{k=0}^{n-1}$ denote again the (positive) eigenvalues of $(vP_0^- v) |_{H_n \to H_n}$, we eventually find
\begin{align} \label{eq:mu_k_T_eps_estimate_int}
\left| \mu_k(T_\varepsilon^-(\lambda)) - (1 - \varepsilon f_0(\lambda) \mu_k) \right| \lesssim  |\log\varepsilon |^{-1} +|\varepsilon \log\varepsilon| + \varepsilon  + (\log\varepsilon)^{-2} \lesssim |\log\varepsilon |^{-1}
\end{align} 
for all $\lambda$ in the reference window, if $\varepsilon$ is small enough. It follows that 
\begin{align}
|1- \varepsilon f_0(\lambda_k(\varepsilon)) \mu_k|  \lesssim |\log\varepsilon |^{-1}  \nonumber
\end{align}
which yields
\begin{align}
\lambda_k(\varepsilon) = - \mu_k \, \varepsilon  \left(1+ O\left(|\log \varepsilon|^{-1} \right) \right) \nonumber
\end{align}
as $\varepsilon \searrow 0$.\\

\textbf{Step 6: Extracting eigenvalue asymptotics - second reference window}\\

Now consider $\lambda<0$ with 
\begin{align} \label{eq:ref_window2_int}
C_1 \leq \varepsilon f_1(\lambda) \leq C_2
\end{align}
for some $C_1,C_2 >0$. It follows
\begin{align}
\varepsilon f_0(\lambda) &\asymp |\log\varepsilon|, \label{eq:eps_f1_ref_window2_int} \\
\varepsilon f_2(\lambda) &\asymp |\varepsilon \log\varepsilon |. \label{eq:eps_f2_ref_window2_int}
\end{align}

We decompose the Schur complement $S_\varepsilon^-(\lambda)$ again into 
\begin{align}
S_\varepsilon^-(\lambda) = \begin{pmatrix}
S_{\varepsilon,11}^-(\lambda) & S_{\varepsilon,12}^-(\lambda)\\
S_{\varepsilon,21}^-(\lambda) & S_{\varepsilon,22}^-(\lambda)
\end{pmatrix} = \begin{pmatrix}
S_\varepsilon^-(\lambda)|_{H_{n} \to H_{n}} & S_\varepsilon^-(\lambda)|_{G_{2} \to H_{n}} \\
S_\varepsilon^-(\lambda)|_{H_{n} \to G_{2}} & S_\varepsilon^-(\lambda)|_{G_{2} \to G_{2}}
\end{pmatrix}. \nonumber
\end{align}
with $G_{2}$ being the orthogonal complement of $H_n$ in $H_{n+2}$. With \eqref{eq:vPv_int_expansion} and \eqref{eq:K_eps122221_comb_int}, we find for $\varepsilon$ small enough,
\begin{align} 
S_{\varepsilon,11}^-(\lambda) &= (1  -  \varepsilon f_0(\lambda) vP_0^-v - \varepsilon f_2(\lambda) vKv) |_{H_n \to H_n} + R_{\varepsilon,11}(\lambda), \nonumber  \\
S_{\varepsilon,12}^-(\lambda)  & = (-\varepsilon f_1(\lambda) v\Pi_{22}v - \varepsilon f_2(\lambda) vKv) |_{G_{2} \to H_n} + R_{\varepsilon,12}(\lambda), \nonumber \\
S_{\varepsilon,21}^-(\lambda) &=   (-\varepsilon f_1(\lambda) v\Pi_{22}v- \varepsilon f_2(\lambda) vKv)  |_{H_n \to G_{2}} + R_{\varepsilon,21}(\lambda)  \nonumber
\end{align}
where $R_{\varepsilon,ij}(\lambda)$ are some linear operators with $\Vert R_{\varepsilon,ij}(\lambda)\Vert \lesssim \varepsilon $ for all $\lambda <0 $ satisfying \eqref{eq:ref_window2_int}. This time, \eqref{eq:eps_f1_ref_window2_int} and \eqref{eq:eps_f2_ref_window2_int} imply 
\begin{align}
\Vert (\varepsilon f_0(\lambda))^{-1} S_{\varepsilon,11}^-(\lambda) - vP_0^- v|_{H_n \to H_n} \Vert& \lesssim |\log\varepsilon|^{-1} (1 + \varepsilon| \log \varepsilon| +   \varepsilon )\lesssim |\log\varepsilon|^{-1}, \label{eq:S_eps11_window2_int} \\
\Vert S_{\varepsilon,12}^-(\lambda) \Vert& \lesssim 1, \label{eq:S_eps12_window2_int}\\
\Vert S_{\varepsilon,21}^-(\lambda) \Vert& \lesssim 1. \label{eq:S_eps21_window2_int}
\end{align}
Because $(vP_0^- v)|_{H_n \to H_n}$ is positive and thus invertible, it follows from \eqref{eq:S_eps11_window2_int} that once $\varepsilon$ is small enough, $S_{\varepsilon,11}^-(\lambda)$ is invertible for all $\lambda<0$ in the reference window \eqref{eq:ref_window2_int} and then 
\begin{align}
\Vert (S_{\varepsilon,11}^-(\lambda))^{-1} \Vert&  \lesssim |\log\varepsilon|^{-1}. \label{eq:S_eps11inv_window2_int}
\end{align}
Using the SLFG Lemma, we infer that $S_\varepsilon^-(\lambda)$ is invertible if and only if the Schur complement
\begin{align}
W_{\varepsilon}^-(\lambda) =  S_{\varepsilon,22}^-(\lambda) - S_{\varepsilon,21}^-(\lambda) (S_{\varepsilon,11}^-(\lambda))^{-1}S_{\varepsilon,12}^-(\lambda) \nonumber
\end{align}
is invertible in $G_{2}$. Here, 
\begin{align}
\Vert S_{\varepsilon,21}^-(\lambda) (S_{\varepsilon,11}^-(\lambda))^{-1}S_{\varepsilon,12}^-(\lambda) \Vert \lesssim |\log\varepsilon|^{-1} \label{eq:S_eps211112_eps_bound_window2_int}
\end{align}
due to \eqref{eq:S_eps12_window2_int},  \eqref{eq:S_eps21_window2_int} and \eqref{eq:S_eps11inv_window2_int}.  

The operator $W_{\varepsilon}^-(\lambda)$ acts on the two-dimensional space $G_{2}$. We can write $W_{\varepsilon}^-(\lambda)$ as
\begin{align}
W_{\varepsilon}^-(\lambda) = (1  -  \varepsilon f_1(\lambda) v\Pi_{22}v - \varepsilon f_2(\lambda) vKv) |_{G_{2} \to G_{2}} + R_{\varepsilon,22}(\lambda) - S_{\varepsilon,21}^-(\lambda) (S_{\varepsilon,11}^-(\lambda))^{-1}S_{\varepsilon,12}^-(\lambda)  \nonumber
\end{align}
where $R_{\varepsilon,22}(\lambda)$ is some linear operator with $\Vert R_{\varepsilon,22}(\lambda)\Vert \lesssim \varepsilon $ for all $\lambda <0 $ satisfying \eqref{eq:ref_window2_int}. We summarize the last three terms in a linear operator 
\begin{align}
R_{\varepsilon,22}'(\lambda) = (-\varepsilon f_2(\lambda) vKv) |_{G_{2} \to G_{2}} + R_{\varepsilon,22}(\lambda) - S_{\varepsilon,21}^-(\lambda) (S_{\varepsilon,11}^-(\lambda))^{-1}S_{\varepsilon,12}^-(\lambda)  \nonumber 
\end{align}
which satisfies
\begin{align}
\Vert R_{\varepsilon,22}'(\lambda) \Vert  \lesssim |\varepsilon \log \varepsilon| + \varepsilon + |\log\varepsilon|^{-1} \lesssim |\log\varepsilon|^{-1} \label{eq:R_eps22_prime_estimate}
\end{align}
because of \eqref{eq:eps_f2_ref_window2_int}, the estimate $\Vert R_{\varepsilon,22}(\lambda) \Vert \lesssim \varepsilon$ and \eqref{eq:S_eps211112_eps_bound_window2_int}. Then,
\begin{align}
W_{\varepsilon}^-(\lambda) = (1  -  \varepsilon f_1(\lambda) v\Pi_{22}v ) |_{G_{2} \to G_{2}} + R_{\varepsilon,22}'(\lambda). \label{eq:W_eps_reduced} 
\end{align}
Let $Q$ be the orthogonal projection onto $G_{2}$. The space $G_{2}$ is spanned by $Q(v \varphi_1^-)$ and $Q(v \varphi_2^-)$. If we set 
\begin{align}
u_2 = \frac{Q(v \varphi_2^-)}{\Vert Q(v \varphi_2^-) \Vert}, \qquad u_1' = Q(v \varphi_1^-) - \langle u_2, Q(v \varphi_1^-) \rangle  u_2, \qquad u_1 = \frac{u_1'}{\Vert u_1' \Vert }, \nonumber
\end{align}
then $\{ u_1, u_2 \}$ is an orthonormal basis of $G_{2}$. Then,
\begin{align}
\det W_{\varepsilon}^-(\lambda) = \langle u_1 ,  W_{\varepsilon}^-(\lambda) u_1 \rangle \cdot \langle u_2 ,  W_{\varepsilon}^-(\lambda) u_2 \rangle - \langle u_1 ,  W_{\varepsilon}^-(\lambda) u_2 \rangle \cdot \langle u_2 ,  W_{\varepsilon}^-(\lambda) u_1 \rangle \nonumber
\end{align}
and 
$W_{\varepsilon}^-(\lambda)$ is not invertible if and only if $\det W_{\varepsilon}^-(\lambda) = 0$. Now,
\begin{align}
|\langle u_1 ,  W_{\varepsilon}^-(\lambda) u_1 \rangle - 1 |& \lesssim |\log\varepsilon|^{-1}, \label{eq:uWu11} \\
|\langle u_1 ,  W_{\varepsilon}^-(\lambda) u_2 \rangle |& \lesssim |\log\varepsilon|^{-1}, \label{eq:uWu12}\\
|\langle u_2 ,  W_{\varepsilon}^-(\lambda) u_1 \rangle |& \lesssim |\log\varepsilon|^{-1}, \label{eq:uWu21}\\
|\langle u_2 ,  W_{\varepsilon}^-(\lambda) u_2 \rangle - (1 - \varepsilon f_1(\lambda) \Vert Q(v \varphi_2^-) \Vert_{L^2(\R^2)}^2 )  |& \lesssim |\log\varepsilon|^{-1},\label{eq:uWu22}
\end{align}
because of \eqref{eq:R_eps22_prime_estimate} and \eqref{eq:W_eps_reduced}. This shows $\langle u_1 ,  W_{\varepsilon}^-(\lambda) u_1 \rangle \neq 0$ for small enough $\varepsilon$ and $W_\varepsilon^-(\lambda)$ is not invertible if and only if 
\begin{align*}
\langle u_2 ,  W_{\varepsilon}^-(\lambda) u_2 \rangle - \langle u_1 ,  W_{\varepsilon}^-(\lambda) u_2 \rangle \cdot (\langle u_1 ,  W_{\varepsilon}^-(\lambda) u_1 \rangle)^{-1} \cdot \langle u_2 ,  W_{\varepsilon}^-(\lambda) u_1 \rangle = 0. \nonumber
\end{align*}
Let us denote the left hand side by $ u_{\varepsilon}^-(\lambda)$ and let 
\begin{align} \label{eq:nu_n_int_window2}
\nu_n = \Vert Q(v \varphi_2^-) \Vert_{L^2(\R^2)}^2 .
\end{align}
We chose now $C_1$, $C_2$ from the reference window small resp.~large enough such that $\nu_n^{-1} \in (C_1 + \delta, C_2 -\delta)$ for some small $\delta$. We then find for small enough $\varepsilon$ that $u_{\varepsilon}^-(\lambda)>0$ for $\lambda$ such that $\varepsilon f_2(\lambda) \leq  \nu_n^{-1}-\delta$, while $u_{\varepsilon}^-(\lambda')<0$ for $\lambda'$ such that $\varepsilon f_2(\lambda') \geq  \nu_n^{-1}+\delta$. As earlier, we conclude by the intermediate value theorem that there must exist $\lambda_n(\varepsilon)$ with $u_{\varepsilon}^-(\lambda_n(\varepsilon))=0$. Finally, one deduces from the estimates \eqref{eq:uWu11} to \eqref{eq:uWu22} that this $\lambda_n(\varepsilon)$ must satisfy
\begin{align}
|1 - \varepsilon f_1(\lambda_n(\varepsilon))  \nu_n | \lesssim |\log \varepsilon|^{-1} \nonumber
\end{align}
which implies with Lemma 2.8 in \cite{Frank2010} %\ref{lem:appendix_xlogx_inv_expansion}
\begin{align}
\lambda_n(\varepsilon) = - \frac{\nu_n}{\pi} \frac{\varepsilon}{|\log \varepsilon|} \left(1 + O\left( \frac{\log|\log \varepsilon|}{|\log \varepsilon|} \right) \right) \nonumber
\end{align}
as $\varepsilon \searrow 0$. \\

\textbf{Step 7: Extracting eigenvalue asymptotics - third reference window}\\

In the last step, we consider $\lambda<0$ with 
\begin{align} \label{eq:ref_window3_int}
C_1 \leq \varepsilon f_2(\lambda) \leq C_2
\end{align}
for some $C_1,C_2 >0$. It follows
\begin{align}
\varepsilon f_0(\lambda) &\asymp \varepsilon e^{\frac{1}{\varepsilon}}, \label{eq:eps_f1_ref_window3_int} \\
\varepsilon f_1(\lambda) &\asymp \varepsilon^2 e^{\frac{1}{\varepsilon}}. \label{eq:eps_f2_ref_window3_int}
\end{align}

Let $G_1$ denote the orthogonal complement of $H_{n+1}$ in $H_{n+2}$, i.e.\ $G_1 = H_{n+2}  \ominus H_{n+1}$, which is one-dimensional. We decompose the Schur complement $S_\varepsilon^-(\lambda)$ from \eqref{eq:S_eps_int}, that acts on $H_{n+2}$, now into 
\begin{align}
S_\varepsilon^-(\lambda) = \begin{pmatrix}
S_{\varepsilon,11}^-(\lambda) & S_{\varepsilon,12}^-(\lambda)\\
S_{\varepsilon,21}^-(\lambda) & S_{\varepsilon,22}^-(\lambda)
\end{pmatrix} = \begin{pmatrix}
S_\varepsilon^-(\lambda)|_{H_{n+1} \to H_{n+1}} & S_\varepsilon^-(\lambda)|_{G_1 \to H_{n+1}} \\
S_\varepsilon^-(\lambda)|_{H_{n+1} \to G_1} & S_\varepsilon^-(\lambda)|_{G_1 \to G_1}
\end{pmatrix}. \nonumber
\end{align}
 As before, we find with \eqref{eq:vPv_int_expansion} and \eqref{eq:K_eps122221_comb_int} for $\varepsilon$ small enough,
\begin{align} 
S_{\varepsilon,11}^-(\lambda) &= (1  -  \varepsilon f_0(\lambda) vP_0^-v  - \varepsilon f_1(\lambda) v\Pi_{22}v - \varepsilon f_2(\lambda) vKv) |_{H_{n+1} \to H_{n+1}} + R_{\varepsilon,11}(\lambda) \nonumber \\
S_{\varepsilon,12}^-(\lambda)  & = ( -\varepsilon f_2(\lambda) vKv) |_{G_1 \to H_{n+1}} + R_{\varepsilon,12}(\lambda), \nonumber \\
S_{\varepsilon,21}^-(\lambda) &=   (-\varepsilon f_2(\lambda) vKv)  |_{H_{n+1} \to G_1} + R_{\varepsilon,21}(\lambda)  \nonumber
\end{align}
where $R_{\varepsilon,ij}(\lambda)$ are some linear operators with $\Vert R_{\varepsilon,ij}(\lambda)\Vert \lesssim \varepsilon $ for all $\lambda <0 $ satisfying \eqref{eq:ref_window3_int}. Now, \eqref{eq:eps_f1_ref_window3_int} and \eqref{eq:eps_f2_ref_window3_int} imply 
\begin{align}
\Vert S_{\varepsilon,12}^-(\lambda) \Vert& \lesssim 1 + \varepsilon \lesssim 1, \label{eq:S_eps12_window3_int}\\
\Vert S_{\varepsilon,21}^-(\lambda) \Vert& \lesssim 1 + \varepsilon \lesssim 1. \label{eq:S_eps21_window3_int}
\end{align}
Furthermore, for small enough $\varepsilon$, the operator $S_{\varepsilon,11}^-(\lambda)$ is invertible for all $\lambda$ in the reference window \eqref{eq:ref_window3_int}. This can be seen by applying the SLFG Lemma again to $S_{\varepsilon,11}^-(\lambda)$. One considers $S_{\varepsilon,11}^-(\lambda)$ on $H_n \oplus G_1'$ where $G_1'= H_{n+1} \ominus H_n$ is the orthogonal complement of $H_n$ in $H_{n+1}$. Then, $S_{\varepsilon,11}^-(\lambda)|_{H_{n} \to H_{n}} $ becomes invertible for small enough $\varepsilon$ because $(vP_0^- v) |_{H_{n} \to H_{n}}$ is positive and upon application of the SLFG Lemma, one sees that $S_{\varepsilon,11}^-(\lambda)$ is invertible for small $\varepsilon$ if and only if $\nu_n $ from \eqref{eq:nu_n_int_window2} satisfies $\nu_n = \Vert Q(v\varphi_2) \Vert^2 \neq 0$. But this is true, because $Q(v\varphi_2)\neq 0$ and therefore $\Vert Q(v\varphi_2) \Vert^2> 0$. Thus, $S_{\varepsilon,11}^-(\lambda)$ is indeed invertible for small enough $\varepsilon$. It is also not hard to find that then
\begin{align}
\Vert (S_{\varepsilon,11}^-(\lambda))^{-1} \Vert&  \lesssim \varepsilon^{-2} e^{-\frac{1}{\varepsilon}} . \label{eq:S_eps11inv_window3_int}
\end{align}
By Lemma \ref{lem:slfg_formula}, $S_\varepsilon^-(\lambda)$ is invertible if and only if the Schur complement
\begin{align}
Z_{\varepsilon}^-(\lambda) =  S_{\varepsilon,22}^-(\lambda) - S_{\varepsilon,21}^-(\lambda) (S_{\varepsilon,11}^-(\lambda))^{-1}S_{\varepsilon,12}^-(\lambda) \nonumber
\end{align}
is invertible in $G_1$. Here, 
\begin{align}
\Vert S_{\varepsilon,21}^-(\lambda) (S_{\varepsilon,11}^-(\lambda))^{-1}S_{\varepsilon,12}^-(\lambda) \Vert \lesssim \varepsilon^{-2} e^{-\frac{1}{\varepsilon}} \label{eq:S_eps211112_eps_bound_window3_int}
\end{align}
due to \eqref{eq:S_eps12_window3_int},  \eqref{eq:S_eps21_window3_int} and \eqref{eq:S_eps11inv_window3_int}.  

The operator $Z_{\varepsilon}^-(\lambda)$ acts on the one-dimensional space $G_1$. We can write $Z_{\varepsilon}^-(\lambda)$ as
\begin{align}
Z_{\varepsilon}^-(\lambda) = (1  -   \varepsilon f_2(\lambda) vKv) |_{G_1 \to G_1} + R_{\varepsilon,22}(\lambda) - S_{\varepsilon,21}^-(\lambda) (S_{\varepsilon,11}^-(\lambda))^{-1}S_{\varepsilon,12}^-(\lambda) \label{eq:Z_eps_long}
\end{align}
where $R_{\varepsilon,22}(\lambda)$ is some linear operator with $\Vert R_{\varepsilon,22}(\lambda)\Vert \lesssim \varepsilon $ for all $\lambda <0 $ satisfying \eqref{eq:ref_window2_int}. Let $\tilde{Q}$ be the orthogonal projector onto $G_1$ and let $\tilde{\varphi} = \tilde{Q}(v\varphi_1^-)/ \Vert  \tilde{Q}(v\varphi_1^-) \Vert_{L^2(\R^2)} $, which is a unit vector that spans $G_1$. Then $Z_{\varepsilon}^-(\lambda)$ can be written as 
\begin{align}
Z_{\varepsilon}^-(\lambda) = z_{\varepsilon}^-(\lambda) \tilde{\varphi} \langle \tilde{\varphi}, \, . \, \rangle \nonumber
\end{align}
with 
\begin{align}
z_{\varepsilon}^-(\lambda) = \langle \tilde{\varphi}, Z_{\varepsilon}^-(\lambda) \tilde{\varphi} \rangle \in \R \nonumber
\end{align}
and one sees that $Z_{\varepsilon}^-(\lambda)$ is not invertible on $G_1$ if and only if $z_{\varepsilon}^-(\lambda) = 0$. Similar to the zero flux case, one might then argue that $z_{\varepsilon}^-(\lambda) = 0$ has at least one solution for small enough $\varepsilon$. We denote this solution by $\lambda_{n+1}(\varepsilon)$. Finally, because of $z_{\varepsilon}^-(\lambda_{n+1}(\varepsilon)) = 0$ and \eqref{eq:Z_eps_long}, $\lambda_{n+1}(\varepsilon)$ satisfies
\begin{align}
|1 - \varepsilon f_2(\lambda_{n+1}(\varepsilon)) \langle \tilde{\varphi} , v K v \tilde{\varphi} \rangle  | \lesssim \varepsilon \nonumber
\end{align}
which implies 
\begin{align}
\lambda_n(\varepsilon) = - \exp\left( - \langle \tilde{\varphi} , v K v \tilde{\varphi} \rangle^{-1}  \varepsilon^{-1} \left(1 + O\left(\varepsilon
\right) \right) \right) \nonumber
\end{align}
as $\varepsilon \searrow 0$. 
\begin{flushright}
$\square$
\end{flushright}

\appendix

\section{Radial fields}

We explain how the asymptotic expansions of Theorems \ref{thm:expansion_zero}, \ref{thm:expansion_nonint} and \ref{thm:expansion_int} reduce to those found by Frank, Morozov and Vugalter in \cite{Frank2010} in the special case of radial magnetic field $B$ and radial potential $V$. As before, we denote $v = V^\frac{1}{2}$.

First, in the case of $\alpha = 0$, equation \eqref{eq:thm_zero_flux_asymp} in  Theorem \ref{thm:expansion_zero} gives exactly the same asymptotic expression as is found in Theorem 1.3 of \cite{Frank2010}. There is nothing to be done here.

Next, in the case of $\alpha > 0$, $\alpha \in\mathbb{R} \setminus \mathbb{Z}$, Theorem 1.2 of Frank, Morozov and Vugalter states the eigenvalue expansions
\begin{align}
\lambda_k(\varepsilon) &= - \mu_k \, \varepsilon \, (1 + O(\varepsilon )),  \qquad  k=0, ..., n-2, \label{eq:lambda_k_frank_nonint} \\
\lambda_{n-1}(\varepsilon) &= - \mu_{n-1} \, \varepsilon \, (1 + O(\varepsilon^{ \alpha'} )),  \label{eq:lambda_n-1_frank_nonint}\\ 
\lambda_{n}(\varepsilon) &=  - \mu_n \, \varepsilon^\frac{1}{\alpha'} \, (1+ O(\varepsilon^{\min\{1,\frac{1}{\alpha'}-1\}} )) 
\end{align}
as $\varepsilon \searrow 0$, where
\begin{align}
\mu_k &= \frac{\int_{\R^2} V |\psi_k^-|^2 \dd x}{\int_{\R^2} |\psi_k^-|^2 \dd x },  \qquad  k=0, ..., n-1,  \label{eq:mu_k_frank_nonint} \\
\mu_n &= \left( \frac{4^{\alpha'-1} \Gamma(\alpha') }{\pi \Gamma(1-\alpha')}\, \int_{\R^2} V |\psi_n^-|^2 \dd x \right)^\frac{1}{\alpha'}.\label{eq:mu_n_frank_nonint}
\end{align}
There are two notable differences to the expressions in Theorem \ref{thm:expansion_nonint} of this paper. First, the coefficients $\{ \mu_k \}_{k=0}^n$ are given explicitly in terms of the (generalized) Aharonov-Casher states $\{ \psi_k^- \}_{k=0}^n$ and second, the second order error terms of the eigenvalues $\lambda_k(\varepsilon)$ with linear first order appear to be better in the expansions by Frank, Morozov and Vugalter. Let us explain how the above asymptotic expressions can be derived from Theorem \ref{thm:expansion_nonint} and its proof under the assumption of radial fields.

We first note that by writing the corresponding integrals in polar coordinates that the zero eigenstates $\{ \psi_k^-\}_{k=0}^{n-1}$ are $L^2$-orthogonal to each other when the magnetic field is radial. The projector $P_0^-$ onto the zero eigenspace of $P_-(A)$ can then be represented as
\begin{align}
P_0^- = \sum_{k=0}^{n-1} \frac{1}{\Vert \psi_k^- \Vert_{L^2(\R^2)}^2} \psi_k^- \langle \psi_k^-, \, . \, \rangle \nonumber
\end{align}
which implies
\begin{align}
vP_0^-v = \sum_{k=0}^{n-1} \frac{1}{\Vert \psi_k^- \Vert_{L^2(\R^2)}^2} v\psi_k^- \langle v\psi_k^-, \, . \, \rangle. \label{eq:vPv_radial}
\end{align}
Theorem \ref{thm:expansion_nonint} asserts that the coefficients $\{ \mu_k \}_{k=0}^{n-1}$ to the asymptotically linear eigenvalues $\{ \lambda_k(\varepsilon) \}_{k=0}^{n-1}$ are given by the non-zero eigenvalues of $vP_0^- v$. Assuming now that $V$ is radial, one finds similarly by writing the corresponding integrals in polar coordinates that the states $\{v \psi_k^- \}_{k=0}^n$ are also orthogonal in $L^2(\R^2)$. Given \eqref{eq:vPv_radial}, we conclude that the non-zero eigenvalues of $vP_0^- v$ are
\begin{align}
\mu_k = \frac{\Vert v \psi_k^- \Vert_{L^2(\R^2)}^2}{\Vert \psi_k^- \Vert_{L^2(\R^2)}^2}=\frac{\int_{\R^2} V |\psi_k^-|^2 \dd x}{\int_{\R^2} |\psi_k^-|^2 \dd x },  \qquad  k=0, ..., n-1, \nonumber
\end{align}
which is exactly the expression in \eqref{eq:mu_k_frank_nonint}.

Let us now discuss the coefficient $\mu_n$. It is pointed out in Corollary 5.10 of \cite{Kovarik2022} that in the case of radial magnetic fields, the states $\varphi^-$ and $\psi^-$ mentioned in Theorem \ref{thm:resolvent_non_int} and thus Theorem \ref{thm:expansion_nonint} are given by $\varphi^-= \psi_n^-$ and $\psi^- = d_n^- \psi_{n-1}^-$ where $d_n^-$ is some complex number defined in \cite{Kovarik2022}. This means that $\operatorname{ran}(v P_0^- v)=\operatorname{span}\{ v\psi_k^- \}_{k=0}^{n-1}$ is orthogonal to $v\psi_n^- = v \varphi^-$ and the projector $Q$ from Theorem \ref{thm:expansion_nonint} acts as an identity on $v \varphi^-$. Therefore, by Theorem \ref{thm:expansion_nonint},
\begin{align}
\mu_n &= \left( \frac{4^{\alpha'-1} \Gamma(\alpha') }{\pi \Gamma(1-\alpha')}\, \Vert Q (v \varphi^-) \Vert_{L^2(\R^2)}^{2} \right)^\frac{1}{\alpha'} = \left( \frac{4^{\alpha'-1} \Gamma(\alpha') }{\pi \Gamma(1-\alpha')}\, \int_{\R^2} V |\psi_n^-|^2 \dd x \right)^\frac{1}{\alpha'} .  \nonumber
\end{align}
This shows that Theorem \ref{thm:expansion_nonint} indeed yields \eqref{eq:mu_k_frank_nonint} and \eqref{eq:mu_n_frank_nonint} for the coefficients $\{ \mu_k\}_{k=0}^n$ in case of radial $B$ and $V$.

The pairwise orthogonality of the (generalized) Aharonov-Casher states is also the reason why the second order error terms in \eqref{eq:lambda_k_frank_nonint} and \eqref{eq:lambda_n-1_frank_nonint} are improved. To see this, recall from \eqref{eq:vPv_non_int_expansion} the asymptotic expansion
\begin{align} 
v(P_-(A)-\lambda)^{-1}v&= f_0(\lambda) vP_0^-v +f_1(\lambda) v\psi^- \langle v\psi^-, . \rangle + f_2(\lambda) v\varphi^- \langle v\varphi^-, . \rangle  + O(1) \nonumber
\end{align}
as $\lambda \to 0$, which was the basis of our calculations in the case of non-integer flux. The reason why the second order error term of $\{ \lambda_k(\varepsilon)\}_{k=0}^{n-1}$ is of order $\varepsilon^{\min\{ \alpha',1-\alpha'\}} $ in Theorem \ref{thm:expansion_nonint} is that under the first reference window the terms $f_1(\lambda) v\psi^- \langle v\psi^-, . \rangle $ and $ f_2(\lambda) v\varphi^- \langle v\varphi^-, . \rangle$ above were both treated as general perturbations to $f_0(\lambda) vP_0^-v$ on $H_{n} = \operatorname{span}\{ v\psi_k^- \}_{k=0}^{n-1} $. But with $\varphi^-= \psi_n^-$ and $\psi^- = d_n^- \psi_{n-1}^-$, above expansion becomes
\begin{align} 
v(P_-(A)-\lambda)^{-1}v&= f_0(\lambda) vP_0^-v +f_1(\lambda)|d_n|^2 v\psi_{n-1}^- \langle v\psi_{n-1}^-, . \rangle + f_2(\lambda) v\psi_{n}^- \langle v\psi_{n}^-, . \rangle  + O(1) \nonumber
\end{align}
as $\lambda \to 0$. The orthogonality of the states $\{v \psi_k^- \}_{k=0}^n$ implies that $f_2(\lambda) v\psi_{n}^- \langle v\psi_{n}^-, . \rangle$ does not perturb $f_0(\lambda) vP_0^-v$ on $H_{n}$ and $f_1(\lambda)|d_n|^2 v\psi_{n-1}^- \langle v\psi_{n-1}^-, . \rangle$ only perturbs $f_0(\lambda) vP_0^-v$ along the one-dimensional subspace $\operatorname{span} \{ v\psi_{n-1}^- \} \subset H_n $. Following the rest of the proof and estimating terms more carefully reveals that the second order error terms of $\lambda_k(\varepsilon)$ can indeed be improved to order $\varepsilon$ for $k=0,..., n-2$ and order $\varepsilon^{\alpha'}$ for $k=n-1$ in this case. 

Finally, let us discuss the case of $\alpha > 0$, $\alpha \in \mathbb{Z}$. Theorem 1.3 of Frank, Morozov and Vugalter gives in this case the eigenvalue expansions
\begin{align}
\lambda_k(\varepsilon) &= - \mu_k \, \varepsilon  \left(1+ O\left(\varepsilon |\log \varepsilon| \right) \right),  \qquad k=0, ...,n-1, \label{eq:lambda_k_frank_int}\\
\lambda_{n}(\varepsilon) &=  - \mu_n \frac{\varepsilon}{|\log \varepsilon|} \left(1 + O\left( \frac{\log|\log \varepsilon|}{|\log \varepsilon|} \right) \right), \\
\lambda_{n+1}(\varepsilon) &= - \exp\left( - \mu_{n+1}^{-1}  \varepsilon^{-1} \left(1 + O\left(\varepsilon
\right) \right) \right)  
\end{align}
as $\varepsilon \searrow 0$, where
\begin{align}
\mu_k &= \frac{\int_{\R^2} V |\psi_k^-|^2 \dd x}{\int_{\R^2} |\psi_k^-|^2 \dd x },  \qquad  k=0, ..., n-1,  \label{eq:mu_k_frank_int} \\
\mu_n &= \frac{1 }{\pi }\int_{\R^2} V |\psi_n^-|^2 \dd x,\label{eq:mu_n_frank_int}\\
\mu_{n+1} &= \frac{1 }{4\pi }\int_{\R^2} V |\psi_{n+1}^-|^2 \dd x.\label{eq:mu_nplus1_frank_int}
\end{align}
Again the expressions for the coefficients $\{ \mu_k\}_{k=0}^{n+1}$ look different to those in Theorem \ref{thm:expansion_int} and also the second order error term of the eigenvalues $\{\lambda_k(\varepsilon)\}_{k=0}^{n-1} $ with linear first order term appears to differ. 

The expression \eqref{eq:mu_k_frank_int} is explained as before by pairwise orthogonality of the zero eigenstates $\{\psi_k^- \}_{k=0}^{n-1}$ and pairwise orthogonality of the states $\{v \psi_k^- \}_{k=0}^{n-1}$. Let us explain the remaining coefficients $\mu_n$ and $\mu_{n+1}$. We find in Corollary 6.7 of \cite{Kovarik2022} that in the case of radial magnetic fields, the states $\varphi_1^-$, $\varphi_2^-$ and $\psi^-$ mentioned in Theorem \ref{thm:expansion_int} are given by $\varphi_1^-=\psi_{n+1}^-$, $\varphi_2^-=\psi_n^-$ and $\psi^- = d_n^- \psi_{n-1}^-$. It then follows by pairwise orthogonality of the states $\{v \psi_k^- \}_{k=0}^{n+1}$ that the projections $Q$ resp.~$\tilde{Q}$ of Theorem \ref{thm:expansion_int} act as identities on $v\varphi_2^-$ resp.~$v\varphi_1^-$. By \eqref{eq:mu_n_thm_int}, we find that indeed
\begin{align}
\mu_n = \frac{1}{\pi} \Vert Q(v \varphi_2^-)\Vert^2_{L^2(\R^2)} = \frac{1}{\pi} \Vert v \varphi_2^-\Vert^2_{L^2(\R^2)} = \frac{1 }{\pi }\int_{\R^2} V |\psi_n^-|^2 \dd x. \nonumber
\end{align}
For the coefficient $\mu_{n+1}$, we recall that 
\begin{align}
\mu_{n+1} = \frac{\langle \phi_1 , v K v  \, \phi_1 \rangle}{\Vert \phi_1 \Vert_{L^2(\R^2)}^{2} } \nonumber
\end{align}
where $\phi_1 =  \tilde{Q}  (v \varphi_1^-) =  v \varphi_1^-$ and the operator $K$ is defined in \eqref{eq:definition_K_op}. Since $\kappa = 0$ in case of radial $B$, see again Kova{\v{r}}{\'{\i}}k \cite{Kovarik2022}, the operator $K$ simplifies to 
\begin{align}
K = \frac{1}{4\pi} \Pi_{11}  + \frac{\pi |d_n^-|^2}{4 } \psi^- \langle \psi^- , \, . \, \rangle  \nonumber
\end{align}
and hence $vKv$ becomes
\begin{align}
vKv = \frac{1}{4\pi} v\varphi_1^- \langle v\varphi_1^- , \, . \, \rangle  + \frac{\pi |d_n^-|^2}{4 } v\psi^- \langle v\psi^- , \, . \, \rangle . \nonumber
\end{align}
Finally, because $\phi_1= v\varphi_1^- = v\psi_{n+1}^-$ is orthogonal to $v\psi^- = v\psi_{n-1}^-$, we conclude that
\begin{align}
\mu_{n+1} = \frac{\langle \phi_1 , v K v  \, \phi_1 \rangle}{\Vert \phi_1 \Vert_{L^2(\R^2)}^{2} } = \frac{1}{4\pi} \frac{\Vert v\psi_{n+1}^- \Vert_{L^2(\R^2)}^4}{\Vert v\psi_{n+1}^- \Vert_{L^2(\R^2)}^{2}} = \frac{1 }{4\pi }\int_{\R^2} V |\psi_{n+1}^-|^2 \dd x. \nonumber
\end{align}
This shows that all coefficients $\{ \mu_k\}_{k=0}^{n+1}$ in Theorem \ref{thm:expansion_int} indeed simplify to those of  \eqref{eq:mu_k_frank_int}, \eqref{eq:mu_n_frank_int} and \eqref{eq:mu_nplus1_frank_int} in the case of radial $B$ and $V$.

Similarly to the non-integer flux case, the improved second order error term can be explained by discussing the integer flux resolvent expansion \eqref{eq:vPv_int_expansion} which states that
\begin{align}
v(P_-(A)-\lambda)^{-1}v= f_0(\lambda) vP_0^-v + f_1(\lambda)v\Pi_{22}v +f_2(\lambda) vKv + O(1)  \nonumber
\end{align}
as $\lambda \to 0$. In the proof of Theorem \ref{thm:expansion_int} we arrived at an second order error term of the eigenvalues $\{ \lambda_k(\varepsilon)\}_{k=0}^{n-1}$ of order $|\log \varepsilon|^{-1}$. The reason why a term of this order appeared was that under the first reference window $f_1(\lambda)v\Pi_{22}v$ was treated as a general perturbation to $f_0(\lambda) vP_0^-v$ on $H_n$. Now, when $B$ and $V$ are radial, pairwise orthogonality of the states $\{ v \psi_k^-\}_{k=0}^{n+1}$ implies that $f_1(\lambda)v\Pi_{22}v$ vanishes on $H_n$ and hence does not perturb $f_0(\lambda) vP_0^-v$ on $H_n$. The next largest perturbation to $f_0(\lambda) vP_0^-v$ on $H_n$ that remains comes from the term $f_2(\lambda) vKv$ (to be specific, the summand of $K$ where $\psi^-=d_n^- \psi_{n-1}^-$ appears) which only perturbs $f_0(\lambda) vP_0^-v$ along the one-dimensional subspace $\operatorname{span} \{ v\psi_{n-1}^- \} \subset H_n $. Carefully finishing the proof with this additional information shows that the eigenvalue expansions of $\{ \lambda_k(\varepsilon)\}_{k=0}^{n-1}$ can be improved to
\begin{align}
\lambda_k(\varepsilon) &= - \mu_k \, \varepsilon  \left(1+ O(\varepsilon ) \right),  \qquad k=0, ...,n-2, \nonumber \\
\lambda_{n-1}(\varepsilon) &= - \mu_{n-1} \, \varepsilon  \left(1+ O\left(\varepsilon |\log \varepsilon| \right) \right) \nonumber
\end{align}
as $\varepsilon \searrow 0$. We see that the order of the error term of the $(n-1)$-th eigenvalue coincides with that given by \eqref{eq:lambda_k_frank_int}. For the eigenvalues $\lambda_0(\varepsilon)$, ..., $\lambda_{n-2}(\varepsilon)$, we gain a slightly improved error term.

\section*{Acknowledgements}

The author is immensely grateful to Hynek Kova{\v{r}}{\'{\i}}k for the introduction to and helpful discussions on this problem. He also gratefully acknowledges the hospitality at Università degli studi di Brescia during his research stay in February-March 2024. The author expresses his gratitude towards Timo Weidl for suggestions on improvements and many helpful comments on exposition. He is also grateful to Rupert Frank and Tobias Ehring for helpful remarks.

\section*{Declarations}

Funding: No funding was received to assist with the preparation of this manuscript. 

Competing interests: The author has no competing interests to declare.

\printbibliography

\end{document}